\patchcmd{\thebibliography}{\chapter*}{\section*}{}{}
\newcommand{\beq}{\begin{equation}}		
\newcommand{\eeq}{\end{equation}}			
\newcommand{\beqq}{\begin{equation*}}	
\newcommand{\eeqq}{\end{equation*}}		
\newcommand{\sgn}{\text{sign}}
\newcommand{\id}{1\hspace{-0,9ex}1}
\renewcommand{\P}{\mathbb{P}}
\newcommand{\wlim}{\text{w -}\lim}
\newcommand{\tp}{\tilde{p}} 
\newcommand{\B}{\mathfrak{B}} 
\newcommand{\F}{\mathcal{F}} 
\newcommand{\A}{\mathcal{A}} 
\newcommand{\E}{\mathbb{E}}
\newtheorem{theorem}{Theorem}[section]
\newtheorem{lemma}[theorem]{Lemma}
\newtheorem{definition}[theorem]{Definition}
\newtheorem{remark}[theorem]{Remark}
\newtheorem{corollary}[theorem]{Corollary}
\newtheorem{proposition}[theorem]{Proposition}
\newtheorem{stepp}{\noindent\bf{Step}}
\renewcommand*{\thesection}{\arabic{section}}
\begin{document}
\pagestyle{headings} \thispagestyle{headings} \thispagestyle{empty}

\noindent\rule{15.812cm}{0.4pt}
\begin{center}
\textsc{A randomized weighted $p$-Laplacian evolution equation with Neumann boundary conditions}
\end{center}
\begin{center}
	by
\end{center} 
\begin{center}
	\textsc{Alexander Nerlich\footnote{Author's Affiliation: Ulm University, 89081 Ulm, Helmholtzstr. 18 }\footnote{Author's E-Mail: alexander.nerlich@uni-ulm.de}\footnote{Author's ORCID: 0000-0001-7823-0648}}
\end{center}
\noindent\rule{15.812cm}{0.4pt}
\vspace{0.4cm}
\pagestyle{myheadings} 
\begin{center} 
	{\large ABSTRACT} 
\end{center}

\noindent The purpose of this paper is to show that the randomized weighted $p$-Laplacian evolution equation given by
\begin{align}
\label{eveqrand}
\begin{cases}  U^{\prime}(t)(\omega) =\text{div} \left( g(\omega) | \nabla U(t)(\omega)| ^{p-2}\nabla U(t)(\omega) \right) \text{  on }  S, \\ 
 g(\omega)| \nabla U(t)(\omega)|^{p-2}\nabla U(t)(\omega)\cdot \eta=0 \text{  on }  \partial S, \\
U(0)(\omega)=u(\omega),\end{cases}
\end{align}
for $\P$-a.e. $\omega \in \Omega$ and a.e. $t \in (0,\infty)$ admits a unique strong solution and to determine asymptotic properties of this solution.\\
\textbf{Mathematical Subject Classification (2010).} 35R60, 35A01, 35A02, 35B40, 47J35 \\ 
\textbf{Keywords.} Randomized evolution equation, Nonlinear evolution equation, Asymptotic results, Tail function behavior, weighted p-Laplace evolution equation, Neumann boundary conditions

\section{Introduction} 
The existence of a unique strong solution of
\begin{align}
\label{eveqdeterministic}
\begin{cases} V^{\prime}(t) = \text{div} \left( \gamma | \nabla V(t)| ^{p-2}\nabla V(t) \right) & \text{on }  S\\ 
 \gamma|\nabla V(t)|^{p-2}\nabla V(t)\cdot \eta=0 & \text{on } \partial S,\\
V(0)=v,\end{cases}
\end{align}
for a.e. $t \in (0,\infty)$, has been proven by F. Andreu, J.M. Maz\'{o}n, J. Rossi and J. Toledo in \cite{main}. In addition, the importance of the PDE (\ref{eveqdeterministic}) to the evolution of fluvial landscapes has been discussed by  B. Birnir and J. Rowlett in \cite{birnirtheory}. Moreover, the asymptotic properties of the solution of (\ref{eveqdeterministic}) have been studied in \cite{ich}.\\ 

The purpose of this paper is to study a random version of the PDE (\ref{eveqdeterministic}); more precisely: The weight function $\gamma:S \rightarrow (0,\infty)$ occurring in (\ref{eveqdeterministic}) is replaced by a (vector-valued) random variable $g:\Omega \rightarrow L^{1}(S)$. Hereby $(\Omega,\mathcal{F},\P)$ denotes a complete probability space, $S \subseteq \mathbb{R}^{n}$, where $n \in \mathbb{N}\setminus \{1\}$, is a sufficiently regular set, $\eta$ is the unit outer normal on $\partial S$ and  $p \in (1,\infty)\setminus\{2\}$.\\
The weight function $g$ being random of course implies that the solution has to be random as well. Consequently, it is appropriate to assume that the initial value is no longer deterministic but also a random quantity. Therefore it is natural to consider the randomized PDE (\ref{eveqrand}) as the PDE corresponding to (\ref{eveqdeterministic}) for a random weight function.\\ 
From an applied point of view, the weight function $\gamma$ in (\ref{eveqdeterministic}) models a stationary water depth which occurs due to rain, on the landscape $v$. The motivation for considering the randomized PDE (\ref{eveqintro}) is that this water depth might be not precisely known, which makes it reasonable to view it as a random quantity.\\
Hereby, the assumptions made on $g(\omega)$ for a given $\omega \in \Omega$ are, due to technical reasons, actually stronger than those made on $\gamma$ in \cite{main}.\\

It seems important to point out that this paper is not concerned with any kind of stochastic differential equation. The noise occurring in this paper's setting does not come from integrating with respect to Brownian motions or other stochastic processes, but originates from the random weight function $g$ and the random initial value $u$.\\

\section{The main results}

The existence of a unique solution of (\ref{eveqrand}) will be obtained by means of the nonlinear semigroup theory; more precisely:\\
First of all a single-valued operator $A:D(A)\rightarrow L^{1}(\Omega;L^{1}(S))$ is introduced, which is defined such that a continuous function $U:[0,\infty) \rightarrow L^{1}(\Omega;L^{1}(S))$, with $U \in W^{1,1}_{\text{Loc}}((0,\infty);L^{1}(\Omega;L^{1}(S)))$, fulfilling
\begin{align}
\label{eveqintro}
-U^{\prime}(t)=AU(t)\text{, for a.e. }t \in (0,\infty),~U(0)=u,
\end{align}
is also a function fulfilling (\ref{eveqrand}). Afterwards it will be established that $A$ is accretive, fulfills a certain range condition and has dense domain. Unfortunately, $A$ is not m-accretive, but its closure, denoted by $\A$, is. Consequently, one obtains that 
\begin{align}
\label{eveqintro2}
0 \in U^{\prime}(t)+\A U(t)\text{, for a.e. }t \in (0,\infty),~U(0)=u,
\end{align}
has a unique mild solution for arbitrary initial values $u \in L^{1}(\Omega;L^{1}(S))$. Thereafter, it will be established that this mild solution is even a strong solution. Moreover, it will be proven that the uniquely determined strong solution of (\ref{eveqintro2}) also fulfills (\ref{eveqintro}), and therefore (\ref{eveqrand}), provided that in addition $\P( u \in L^{\infty}(S))=1$.\\ 

For a given $u \in L^{1}(\Omega;L^{1}(S))$, let $T(\cdot)u: [0,\infty) \rightarrow L^{1}(\Omega;L^{1}(S))$ be the strong solution of (\ref{eveqintro2}) and let $T(\cdot,\gamma)v :[0,\infty) \rightarrow L^{1}(S)$ be the strong solution of its deterministic counterpart, where $v \in L^{1}(S)$ and the weight function $\gamma \in L^{1}(S)$ fulfills certain regularity conditions.\\
It will be proven that $(T(t)u)(\omega)=T(t,g(\omega))u(\omega)$ for $\P$-a.e. $\omega \in \Omega$. The latter makes it possible to transfer many results from \cite{ich} to the current setting; more precisely:\\ 
Firstly, $T(\cdot)u$ conserves mass with probability one, i.e. for a given $t \in [0,\infty)$ one has\linebreak $\overline{((T(t)u)(\omega))} =\overline{(u(\omega))}$ for $\P$-a.e. $\omega \in \Omega$, every $u \in L^{1}(\Omega;L^{1}(S))$ and $t\geq 0$, where $\overline{(v)}:= \frac{1}{\lambda(S)}\int \limits_{S}v d\lambda$ is the average of any $v \in L^{1}(S)$; hereby $\lambda$ is the Lebesgue measure on $\mathbb{R}^{n}$.\\
Secondly, one has for a given $t \in (0,\infty)$ and  $u \in L^{1}(\Omega;L^{1}(S))$, with $\P(u \in L^{2}(S))=1$, that
\begin{align}
\label{introeqasymp}
||(T(t)u)(\omega)-\overline{(u(\omega))}||_{L^{1}(S)} \leq c_{1} \Delta_{u}(\omega)^{\frac{1}{p}}
\left(\frac{1}{t}\right)^{\frac{1}{p}},~\text{$\P$-a.e. } \omega \in \Omega,
\end{align}
where $\Delta_{u}:\Omega \rightarrow \mathbb{R}$, with $\Delta_{u}(\omega):=||u(\omega)-\overline{(u(\omega))}||^{2}_{L^{2}(S)}$. Hereby it is actually possible to explicitly determine $c_{1} \geq 0$.\\ 
Finally, if $p>n$, and if $u \in L^{1}(\Omega;L^{1}(S))$ is such that $\P(u \in L^{p}(S))=1$, then one has
\begin{align}  
\label{introeqinf}
||(T(t)u)(\omega)-\overline{(u(\omega))}||_{L^{\infty}(S)} \leq c_{2} \Delta_{u}(\omega)^{\frac{1}{p}} \left(\frac{1}{t}\right)^{\frac{1}{p}},~\text{$\P$-a.e. } \omega \in \Omega,
\end{align}
where $c_{2}$ can be determined explicitly.\\
Relation (\ref{introeqinf}) is a very strong regularity result. Proving it is actually fairly simple, since one can relate $T(t)u$ to its deterministic counterpart.\\

There are two asymptotic results proven in this paper which are not direct consequences of the results in \cite{ich}.\\
Firstly, one has 
\begin{align}
\label{introeqasymp2}
\lim \limits_{t \rightarrow \infty} T(t)u=\overline{(u)},\text{ in } L^{q}(\Omega;L^{q}(S)).
\end{align}
for any $u \in L^{q}(\Omega;L^{q}(S))$, $q \in [1,\infty)$.\\
Secondly, it is possible to derive an upper bound for the tail function of $||T(t)u-\overline{(u)}||^{2}_{L^{2}(S)}$, assuming that $p \in [\frac{2n}{n+2},2)\setminus \{1\}$; more precisely: Let $r \in [1,\infty)$ and assume $||u||^{4r}_{L^{2}(S)} \in L^{1}(\Omega)$. Then one has
\begin{align}
\label{introfuckingnicebound1}
\P \left( \int \limits_{S} (T(t)u-\overline{(u)})^{2}d\lambda > \alpha \right) \leq \left(\frac{1}{t}\right)^{r} \frac{2}{\log(\alpha+1)} c_{3}~ \left(\E(\Delta_{u})\E((1+\Delta_{u})^{2r})\right)^{\frac{1}{2}},  
\end{align}
for any $\alpha,~t \in (0,\infty)$; and if there is even an $\varepsilon>0$ such that $e^{\varepsilon ||u||^{2}_{L^{2}(S)}} \in L^{1}(\Omega)$, one has
\begin{align}
\label{introfuckingnicebound2}
\P \left( \int \limits_{S} (T(t)u-\overline{(u)})^{2}d\lambda > \alpha \right) \leq \exp\left(-t^{\frac{1}{2}}c_{4}\right)\frac{2 \exp\left(\frac{\varepsilon}{2}\right)}{\log(\alpha+1)}\left(\E(\Delta_{u})\E \left(\exp \left(\varepsilon\Delta_{u}\right)\right)\right)^{\frac{1}{2}},
\end{align} 
for any $\alpha,~t \in (0,\infty)$.\\
Hereby $c_{3},~c_{4} \geq 0$ are again constants which can be determined explicitly.\\
One should note that if $n=2$, which is from an applied point of view the interesting case, one can apply either (\ref{introeqinf}) or (\ref{introfuckingnicebound1}), resp. (\ref{introfuckingnicebound2}), given that the initial is sufficiently integrable.\\

This paper is structured as follows: Section \ref{basicdef} contains all assumptions, notations and basic definitions which are needed throughout this paper. Section \ref{section_carc} (Section \ref{section_ss}, resp.) deals with the existence and uniqueness of mild (strong, resp.) solutions of (\ref{eveqintro2}). The assertions (\ref{introeqasymp})-(\ref{introeqasymp2}) are established in Section \ref{section_gar}. Finally, Section \ref{section_desmallp} deals with the tail function bounds (\ref{introfuckingnicebound1}) and (\ref{introfuckingnicebound2}).\\
Moreover, this article contains two appendices. The first answers some technical measurability questions which occur while defining $A$ and the second one provides some delicate results about the deterministic counterpart of $A$. Those are mostly needed to prove the existence and uniqueness of mild solutions of (\ref{eveqintro2}).

\section{Assumptions, notations and preliminary results}
\label{basicdef}
Some notational preliminaries are in order: For any $m$-dimensional Borel measurable set $B$, where $m \in \mathbb{N}$, $\mathfrak{B}(B)$ denotes the Borel $\sigma$-algebra on this set. Moreover, for any measure $\mu:\mathfrak{B}(B) \rightarrow [0,\infty]$ and any $q \in [1,\infty]$, $L^{q}(B,\mu;\mathbb{R}^{m})$ denotes the usual Lebesgue spaces and $||\cdot||_{L^{q}(B,\mu;\mathbb{R}^{m})}$ denotes the canonical norm on these spaces.\\
If $m=1$, $L^{q}(B,\mu;\mathbb{R}^{m})$ is abbreviated by $L^{q}(B,\mu)$ and if $\mu$  is the Lebesgue measure, $L^{q}(B)$ is written. Of course the analogous convention applies to $||\cdot||_{L^{q}(B,\mu;\mathbb{R}^{m})}$.\\

Now let $B \subseteq \mathbb{R}^{m}$ be open. $L^{1}_{\text{Loc}}(B;\mathbb{R}^{m})$ denotes the space of locally Lebesgue integrable functions $f:B\rightarrow \mathbb{R}^{m}$. If $m=1$, then $L^{1}_{\text{Loc}}(B)$ is written.\\ 
Moreover, $C^{\infty}(\overline{B})$ denotes the space of all infinitely often continuously differentiable function, such that the function and all of its partial derivatives can be extended continuously to the boundary of $B$ and $C^{\infty}_{c}(B)$ denotes the space of all functions $\varphi \in C^{\infty}(\overline{B}) $ which have compact support contained in $B$. Moreover, $C^{\infty}(\overline{B};\mathbb{R}^{m})$, $C^{\infty}_{c}(\overline{B};\mathbb{R}^{m})$ resp., denotes the space of all functions $\varphi=(\varphi_{1},...,\varphi_{m})$ such that $\varphi_{j} \in C^{\infty}(\overline{B})$ for all $j=1,...,m$, $\varphi_{j} \in C^{\infty}_{c}(\overline{B})$ for all $j=1,...,m$, resp.\\
Moreover, $W^{1,1}_{\text{Loc}}(B)$ denotes the space of weakly differentiable functions and $\nabla f$ denotes the weak derivative of any $f \in W^{1,1}_{\text{Loc}}(B)$. In addition, $W^{1,q}(B)$ denotes the Sobolev space of once weakly differentiable functions, such that the function and all of its weak derivatives are in $L^{q}(B)$.\\ 
For any Banach space $(X,||\cdot||_{X})$, introduce
\begin{align*}
W^{1,1}_{\text{Loc}}( (0,\infty);X):=\{f:(0,\infty)\rightarrow X|~f \text{ is locally absolutely continuous and differentiable a.e.}\} 
\end{align*}
and
\begin{align*}
C( [0,\infty);X):=\{f:[0,\infty)\rightarrow X|~f \text{ is continuous}\}.
\end{align*}
In addition, for any $f \in W^{1,1}_{\text{Loc}}((0,\infty);X) $ the function $f^{\prime}$ denotes the almost everywhere existing derivative of $f$.\\
Moreover, $\mathfrak{B}(X)$ denotes the Borel $\sigma$-algebra on $X$ and $2^{X}$ denotes its power set.\\
For a complete probability space $(\Omega,\mathcal{F},\P)$, introduce
\begin{align*}
L^{q}(\Omega;X):=\{f:\Omega\rightarrow X|~f\text{ is } \mathcal{F}-\mathfrak{B}(X)-\text{meas. and } \int \limits_{\Omega} ||f(\omega)||^{q}_{X}d\P(\omega) <\infty\},~\forall q\in [1,\infty)
\end{align*}
and let $||\cdot||_{L^{q}(\Omega;X)}$ denote its canonical norm. This is abbreviated by $L^{q}(\Omega)$, if $X=\mathbb{R}$.\\

Finally, $\lambda$ denotes the Lebesgue measure, $x\cdot y$ the canonical inner product between two vectors $x,y \in \mathbb{R}^{m}$ and $|\cdot|$ the Euclidean norm on $\mathbb{R}^{m}$.\\ 

Here and in all that follows let $n \in \mathbb{N} \setminus \{1\}$ be arbitrary but fixed, let $p \in (1,\infty) \setminus \{2\}$ and let $\tilde{p}$ denote its H\"older conjugate, i.e. $\tilde{p} \in (1,\infty)$ is such that $\frac{1}{p}+\frac{1}{\tilde{p}}=1$.  Additionally, introduce $\emptyset \neq S \subseteq \mathbb{R}^{n}$ and assume that it is an open, bounded, connected set of class $C^{1}$. Moreover, $(\Omega,\mathcal{F},\P)$ denotes a complete probability space.\\ 
In addition, introduce for any $0 < \varepsilon_{1} < \varepsilon_{2}< \infty$ the set
\begin{align*}
L^{1}_{\varepsilon_{1},\varepsilon_{2}}(S):=\{ f \in L^{1}(S): \varepsilon_{1} \leq f \leq \varepsilon_{2} \text{ a.e. on S}\}
\end{align*}
and note that $L^{1}_{\varepsilon_{1},\varepsilon_{2}}(S)$ is closed w.r.t. $||\cdot||_{L^{1}(S)}$. This yields that $L^{1}_{\varepsilon_{1},\varepsilon_{2}}(S) \in \mathfrak{B}(L^{1}(S))$.\\
Moreover, introduce analogously
\begin{align*}
L^{1}_{\varepsilon_{1},\varepsilon_{2}}(\Omega;L^{1}(S)):=\{ f \in L^{1}(\Omega;L^{1}(S)):~\P(\{\omega\in \Omega: f(\omega) \in  L^{1}_{\varepsilon_{1},\varepsilon_{2}}(S) \})=1\}.
\end{align*}
Finally, let  $0< g_{1}\leq g_{2} <\infty$ be constants and let $g \in L^{1}_{g_{1},g_{2}}(\Omega;L^{1}(S))$.

\begin{remark} It will be necessary to consider expressions of the form $|x|^{p-2}x$, resp. $|x|^{p-2} x\cdot y$ for given $x,y\in\mathbb{R}^{n}$. And the last needs to be considered even if $x=0$.\\ 
Notation will be slightly abused by setting $|x|^{p-2}x:=0$ and $|x|^{p-2} x\cdot y :=0$ if $x=0$ for any $y \in \mathbb{R}^{n}$.\\
This is justified because one verifies immediately that the mappings $ \mathbb{R}^{n}\setminus \{0\} \ni x \mapsto |x|^{p-2}x$ and $ \mathbb{R}^{n}\setminus \{0\} \ni x \mapsto |x|^{p-2} x\cdot y $ can be extended continuously by $0$ in $x=0$.
\end{remark}

\begin{remark} Some severe measurability issues will occur throughout this paper. The reader who is not too familiar with measurability of vector-valued mappings is referred to \cite[Chapter 5]{meas}, for a detailed introduction to this concept. Particularly, one should note that the vector-valued mappings occurring here act on $L^{1}(S)$ which is well known to be separable. Moreover, it is common knowledge that in this case the notions of measurability (in the usual measure-theoretic sense), strong measurability and weak measurability are equivalent, cf. \cite[Theorem 5.8 and Theorem 5.9]{meas}. 
\end{remark}

The following operator plays a crucial role in this paper. Some questions concerning measurability occur during its definition. Answers to these  question can be found in Appendix \ref{appendix_A}. 

\begin{definition}\label{defA} Let $A:D(A)\rightarrow 2^{L^{1}(\Omega;L^{1}(S))}$ be such that $(f,\hat{f}) \in A$ if and only if the following assertions hold.
\begin{enumerate}
\item $f \in L^{1}(\Omega;L^{1}(S))$ and $P(f \in W^{1,p}(S)\cap L^{\infty}(S))=1$. 
\item $\hat{f} \in L^{1}(\Omega;L^{1}(S))$.
\item $\P\left( \int \limits_{S}g| \nabla f|^{p-2} \nabla f\cdot \nabla \varphi d\lambda = \int \limits_{S}\hat{f}\varphi d \lambda,~\forall \varphi \in W^{1,p}(S)\cap L^{\infty}(S) \right)=1 $.
\end{enumerate} 
 Moreover, let $\A:D(\A)\rightarrow 2^{L^{1}(\Omega;L^{1}(S))}$ be the closure of $A$, i.e. $(f,\hat{f}) \in \A$ if and only if there is a sequence $((f_{m},\hat{f}_{m}))_{m \in \mathbb{N}} \subseteq A$ such that
 \begin{align*}
 \lim \limits_{m \rightarrow \infty} (f_{m},\hat{f}_{m})=(f,\hat{f}),\text{ in } L^{1}(\Omega;L^{1}(S))^{2}.
 \end{align*} 
\end{definition}

\begin{definition} Let $\gamma \in L^{1}_{g_{1},g_{2}}(S)$ and introduce the single-valued operator $a(\gamma):D(a(\gamma))\rightarrow L^{1}(S)$ by: $(f,\hat{f}) \in a(\gamma)$ if and only if the following assertions hold.
	\begin{enumerate}
		\item $f \in W^{1,p}(S) \cap L^{\infty}(S)$. 
		\item $\hat{f} \in L^{1}(S)$.
		\item $\int \limits_{S}  \gamma|\nabla f|^{p-2}\nabla f\cdot \nabla \varphi  d \lambda = \int \limits_{S} \hat{f} \varphi d \lambda$ for all $\varphi\in W^{1,p}(S)\cap L^{\infty}(S)$.
	\end{enumerate}
	Moreover, let $\mathfrak{a}(\gamma):D(\mathfrak{a}(\gamma))\rightarrow L^{1}(S)$ be the closure of $a(\gamma)$, i.e. $(f,\hat{f})\in \mathfrak{a}(\gamma)$ if and only if there is a sequence $((f_{m},\hat{f}_{m}))_{m \in \mathbb{N}} \subseteq a(\gamma)$ such that
	\begin{align*}
	\lim \limits_{m \rightarrow \infty} (f_{m},\hat{f}_{m})=(f,\hat{f}),\text{ in } L^{1}(S)^{2}.
	\end{align*}
\end{definition}

As $a(\gamma)$ is single-valued, it will either be written $(f,\hat{f}) \in a (\gamma)$ or $\hat{f}=a(\gamma)f$, depending on which of both notations is more convenient for the setting.\\ 

The operators $a(\gamma)$ and $\mathfrak{a}(\gamma)$ are the operators which are used in \cite{main} to prove the existence of unique solutions of (\ref{eveqdeterministic}), cf. \cite[Theorem 3.7]{main}.\\ 
An explicit description of $\mathfrak{a}(\gamma)$ can be found in \cite[Prop. 3.6]{main}. But this description is not needed for our purposes.\\

\begin{remark}\label{remarkca} Let $J_{0}$ denote the space of all lower-semicontinuous and convex functions, $j: \mathbb{R} \rightarrow [0,\infty]$, fulfilling $j(0)=0$. Moreover, for two functions $f_{1},~f_{2} \in L^{1}(S)$ it will be written $f_{1}<<f_{2}$, whenever
	\begin{align*}
	\int \limits_{S} j \circ f_{1}d \lambda \leq \int \limits_{S} j \circ f_{2} d \lambda,~\forall j \in J_{0}.
	\end{align*}
	Finally an operator $B:D(B)\rightarrow 2^{L^{1}(S)}$ is called completely accretive if
	\begin{align*}
	f-h << f-h +\alpha (\hat{f}-\hat{h}),
	\end{align*}
	for all $\alpha \in (0,\infty)$ and $(f,\hat{f}),~(h,\hat{h}) \in B$.\\
	Note the following: If $f_{1},f_{2} \in L^{1}(S)$ are such that $f_{1}<<f_{2}$, one has $||f_{1}||_{L^{q}(S)}\leq||f_{2}||_{L^{q}(S)}$, for every $q \in [1,\infty]$, provided $f_{2} \in L^{q}(S)$.
\end{remark}

Besides some technical advantages, the concept of complete accretivity also offers a (fairly straightforward) way to prove differentiability almost everywhere of mild solutions of evolution equations, even if the evolution equation is framed in a Banach space which does not have the Radon-Nikodym property, cf. \cite[Theorem 4.2 and 4.4]{cao} . The latter is the main reason why this concept is frequently used when it comes to nonlinear evolution equations.\\
The reader is referred to \cite{cao} for a detailed treatment of complete accretivity.

\begin{remark}Besides the concept of complete accretivity, the notion of accretivity and m-accretivity will be needed as well. An operator $B:D(B)\rightarrow 2^{X}$, where $(X,||\cdot||_{X})$ is an arbitrary Banach space, is called accretive if
	\begin{align*}
		||f-h||_X \leq ||f-h+\alpha(\hat{f}-\hat{h})||_X
	\end{align*}
for all $(f,\hat{f})$,~$(h,\hat{h})\in B$ and $\alpha>0$.\\
Moreover, $B$ is called m-accretive if it is accretive and
\begin{align*}
	R(Id+A)=X.
\end{align*}
Now fix $\beta \in (0,\infty)$ and let $B:D(B) \rightarrow 2^{X}$ be an accretive operator, then it is clear that there is for every $h \in R(Id+\beta B)$ precisely one pair $(f,\hat{f}) \in B$ such that $h=f+\beta\hat{f}$. Consequently, one can introduce $(Id+\beta B)^{-1}:R(Id+\beta B)\rightarrow D(B)$, where $(Id+\beta B)^{-1}h$ is precisely the element $f \in D(B)$, such that there is an $\hat{f} \in Bf$ with $h=f+\beta\hat{f}$.\\
The mapping $(Id+\beta B)^{-1}$ is called the resolvent of $B$.
\end{remark}

This section concludes by introducing two more spaces needed in this paper. In that what follows, $\tau_{k}:\mathbb{R}\rightarrow \mathbb{R}$, where $k \in (0,\infty)$, denotes the standard truncation function, that is,
\begin{align*} 
\tau_{k}(s):=\begin{cases} s, & \text{if } |s|< k
\\ k\sgn(s), & \text{if } |s|\geq k.  \end{cases}
\end{align*} 
For $f: S\rightarrow \mathbb{R}$, $\tau_{k}(f):S \rightarrow \mathbb{R}$ is defined by $\tau_{k}(f):=\tau_{k}(f(\cdot))$ and for $f:\Omega \rightarrow L^{1}(S)$, $\tau_{k}(f):\Omega \rightarrow L^{1}(S)$ is defined by $\tau_{k}(f)(\omega):=\tau_{k}(f(\omega))$.\\
Moreover, introduce
\begin{align*}
\tau(L^{1}(\Omega;L^{1}(S))):=\{ \tau_{k}(f)|~f \in L^{1}(\Omega;L^{1}(S)),~k \in (0,\infty)\}  
\end{align*} 
and
\begin{align*}
L^{1,\infty}(\Omega;L^{1}(S)):=\{f \in L^{1}(\Omega;L^{1}(S))|~\P(f \in L^{\infty}(S))=1\}.
\end{align*}

\section{Mild solutions of the randomized weighted $p$-Laplacian evolution equation with Neumann boundary conditions}
\label{section_carc}

The following result serves as a guideline for this section and is taken from \cite{BenilanBook} which is a very comprehensive book on nonlinear semigroups and evolution equations.
\begin{theorem}\label{lemmamsg} Let $(X,||\cdot||)$ be a Banach space and $B: D(B) \rightarrow 2^{X}$ be an m-accretive operator. Moreover, assume that $D(B)$ is dense in $(X,||\cdot||)$. Then the evolution equation
\begin{align*}
 0 \in U^{\prime}(t)+ B U(t)\text{, for a.e. } t \in (0,\infty),~ U(0)=u,
\end{align*}
has for any $u \in X$ precisely one mild solution $U \in C([0,\infty);X)$.
\end{theorem}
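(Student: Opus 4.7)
The plan is to recognize this as the classical Crandall--Liggett generation theorem, whose proof proceeds by exponential discretization of the evolution equation via the resolvent. The statement is quoted from \cite{BenilanBook}, so in practice I would just cite it; what follows outlines the underlying construction.

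First, I would introduce the resolvent $J_{\lambda} := (\Id + \lambda B)^{-1}$. By m-accretivity, $R(\Id + \lambda B) = X$ for every $\lambda > 0$, and combined with accretivity this shows $J_{\lambda}$ is a well-defined, single-valued, non-expansive map from $X$ into $D(B)$. Second, for each $n \in \N$ and $t \geq 0$, I would introduce the implicit Euler approximation $U_n(t) := J_{t/n}^n u$, which is well-defined on all of $X$ precisely because the range condition holds for every $\lambda>0$.

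The heart of the proof is the Crandall--Liggett estimate: for $u \in D(B)$,
\begin{align*}
\|J_{\lambda}^k u - J_{\mu}^j u\|_X \leq \Phi(k\lambda, j\mu, \lambda, \mu)\, \inf_{\hat{u} \in Bu}\|\hat{u}\|_X,
\end{align*}
where $\Phi$ is an explicit function of the involved step sizes that vanishes when $k\lambda = j\mu$ and $\lambda, \mu \to 0$. This is proved by double induction on $(k,j)$, exploiting the resolvent identity $J_{\lambda} v = J_{\mu}\bigl(\tfrac{\mu}{\lambda} v + (1 - \tfrac{\mu}{\lambda}) J_{\lambda} v\bigr)$ together with the contraction property of $J_{\mu}$. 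Applied with $\lambda = t/n$, $\mu = t/m$, $k=n$, $j=m$, it yields that $(U_n(t))_n$ is Cauchy in $X$ locally uniformly in $t$ for $u\in D(B)$, so one can define $U(t) := \lim_n U_n(t)$. Uniform non-expansiveness of each $U_n$ in the initial datum, together with density of $D(B)$, lets one extend $U(\cdot)u$ by continuity to arbitrary $u \in X$, producing a map $u \mapsto U \in C([0,\infty);X)$.

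Third, one checks that the resulting $U$ satisfies Benilan's definition of a mild solution, namely that on each compact interval it is a uniform limit of $\varepsilon$-approximate step-function solutions of the scheme $u_k - u_{k-1} + (t_k - t_{k-1}) B u_k \ni 0$; this is essentially tautological from the construction. Uniqueness follows from Benilan's integral inequality, which for any two mild solutions with the same initial datum implies $\|U(t)-V(t)\|_X \leq \|U(0)-V(0)\|_X = 0$. The main obstacle is the Crandall--Liggett estimate itself: once the inductive bound on $\|J_\lambda^k u - J_\mu^j u\|$ is established, density of $D(B)$ and the contractivity of $J_\lambda$ make all remaining steps routine.
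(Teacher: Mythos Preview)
Your proposal is correct and matches the paper's approach exactly: the paper simply cites \cite[Theorem 4.6]{BenilanBook} for existence and \cite[Theorem 6.5]{BenilanBook} for uniqueness, which is precisely what you say you would do in practice. Your additional outline of the underlying Crandall--Liggett construction is accurate and goes beyond the paper's bare citation.
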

\begin{proof} See \cite[Theorem 4.6]{BenilanBook} for the existence and \cite[Theorem 6.5]{BenilanBook} for the uniqueness.
\end{proof}

Consequently, as the headline suggests, the purpose of this section is to prove that $\A$ is m-accretive and has dense domain.\\
For proving this, some technical properties of $a(\gamma)$ have to be established. These technical results and their proofs have been moved to Appendix \ref{appendix_b}. Moreover, the denseness of the spaces $\tau(L^{1}(\Omega;L^{1}(S)))$ and $L^{1,\infty}(\Omega;L^{1}(S))$ is also proven in Appendix \ref{appendix_b}.\\ Particularly, none of the proofs in Appendix \ref{appendix_b} relies on any result in this section.\\

The Lemmata \ref{lemmawelldefined}-\ref{propclosaarelation} are essentially a collection of useful properties of the considered operators. These results are on the one hand of extreme importance for the next sections, and on the other hand, they build the path to a fundamental estimate which yields particularly the accretivity of $\A$, see Proposition \ref{lemmacaversion}. \\ 
Lemma \ref{measlemma}, together with Appendix \ref{appendix_b}, brings us in the position to prove that $\A$ is m-accretive, which is achieved in Theorem \ref{rctheorem}. Finally, it will be established that $A$, and a fortiori also $\A$, has dense domain, which then implies the first main result of this paper, namely the existence of unique mild solutions of (\ref{eveqintro2}).

\begin{lemma}\label{lemmawelldefined} The operator $A$ is single-valued.
\end{lemma}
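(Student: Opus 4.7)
The plan is to show that if both $(f,\hat f_1)$ and $(f,\hat f_2)$ lie in $A$, then $\hat f_1=\hat f_2$ in $L^1(\Omega;L^1(S))$. The key is that item iii) in Definition \ref{defA} tests the weak divergence against a class of test functions rich enough to distinguish $L^1(S)$-functions, so the difference $\hat f_1(\omega)-\hat f_2(\omega)$ is forced to vanish a.e. on $S$ for $\P$-a.e. $\omega$.

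First, I would fix $(f,\hat f_1),(f,\hat f_2)\in A$. By item iii), there exist sets $\Omega_1,\Omega_2\in\mathcal F$ with $\P(\Omega_1)=\P(\Omega_2)=1$ such that on the respective sets the variational identities hold for every $\varphi\in W^{1,p}(S)\cap L^\infty(S)$, with the same left-hand side $\int_S g(\omega)|\nabla f(\omega)|^{p-2}\nabla f(\omega)\cdot\nabla\varphi\,d\lambda$. On the intersection $\Omega_0:=\Omega_1\cap\Omega_2$, which still has full measure, subtracting the two identities yields
\begin{equation*}
\int_S\bigl(\hat f_1(\omega)-\hat f_2(\omega)\bigr)\varphi\,d\lambda=0\qquad\text{for all }\varphi\in W^{1,p}(S)\cap L^\infty(S).
\end{equation*}

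Next, I restrict the test functions to $\varphi\in C^\infty_c(S)$. Since every such $\varphi$ is bounded and has bounded weak derivatives of every order on the bounded set $S$, we have $C^\infty_c(S)\subseteq W^{1,p}(S)\cap L^\infty(S)$, so the identity above applies. Because $\hat f_1(\omega)-\hat f_2(\omega)\in L^1(S)\subseteq L^1_{\text{loc}}(S)$, the fundamental lemma of the calculus of variations forces $\hat f_1(\omega)=\hat f_2(\omega)$ $\lambda$-a.e.\ on $S$ for every $\omega\in\Omega_0$.

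Finally, this means $\hat f_1=\hat f_2$ in $L^1(S)$ on the full-measure set $\Omega_0$, which is exactly equality in $L^1(\Omega;L^1(S))$, and proves that $A$ is single-valued. The only conceivable obstacle is a measurability concern when intersecting the two full-measure events, but this is harmless because $(\Omega,\mathcal F,\P)$ is complete and the events in question come directly from item iii) of Definition \ref{defA}; no separate measurability argument is needed beyond what is already built into the definition of $A$. The proof is therefore essentially a direct application of the Du Bois-Reymond lemma.
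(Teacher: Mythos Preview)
Your proof is correct and follows essentially the same approach as the paper: subtract the two variational identities from item iii) on a full-measure set to obtain $\int_S(\hat f_1(\omega)-\hat f_2(\omega))\varphi\,d\lambda=0$ for all admissible test functions, and conclude that $\hat f_1(\omega)=\hat f_2(\omega)$ in $L^1(S)$ for $\P$-a.e.\ $\omega$. The only difference is cosmetic---you make the final step explicit by restricting to $C^\infty_c(S)$ and invoking the Du Bois-Reymond lemma, whereas the paper leaves this implicit.
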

\begin{proof} Let $(f,\hat{f}),~(f,\tilde{f}) \in A$. Then one has
\begin{align*}
\int \limits_{S} (\hat{f}-\tilde{f})\varphi d \lambda =0,~\forall \varphi \in W^{1,p}(S) \cap L^{\infty}(S)
\end{align*}
with probability one. Consequently $\tilde{f}=\hat{f}$ a.e. on $S$ with probability one, i.e. $\hat{f}=\tilde{f}$ as elements of $L^{1}(\Omega,L^{1}(S))$.
\end{proof} 

\begin{lemma}\label{lemmaAarel} Let $f,~\hat{f} \in L^{1}(\Omega;L^{1}(S))$. The following assertions are equivalent.
\begin{enumerate}
\item  $(f,\hat{f})\in A$
\item  $\P\left(\big\{\omega \in \Omega:~(f(\omega),\hat{f}(\omega))\in a(g(\omega))\big\}\right)=1$
\end{enumerate} 
\end{lemma}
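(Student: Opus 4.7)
The plan is to unpack both sides to a list of conditions on $\omega$ and check that they match up almost surely; the lemma is essentially definitional, and the only real content is a measurability check already addressed by Appendix \ref{appendix_A}.

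Writing out $(f(\omega),\hat f(\omega)) \in a(g(\omega))$ literally means: $f(\omega) \in W^{1,p}(S)\cap L^{\infty}(S)$, $\hat f(\omega) \in L^{1}(S)$, and
\begin{equation*}
\int\limits_{S} g(\omega)|\nabla f(\omega)|^{p-2}\nabla f(\omega)\cdot\nabla\varphi\,d\lambda = \int\limits_{S}\hat f(\omega)\varphi\,d\lambda,\qquad \forall\varphi\in W^{1,p}(S)\cap L^{\infty}(S).
\end{equation*}
By contrast, $(f,\hat f)\in A$ asks that each of these three clauses holds on a set of full probability, plus the global integrability $f,\hat f\in L^{1}(\Omega;L^{1}(S))$, which is the standing hypothesis of the lemma.

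For the direction (i)$\Rightarrow$(ii) I would take the intersection of four full-probability events: $\{f(\omega)\in W^{1,p}(S)\cap L^{\infty}(S)\}$ (clause 1 of $A$), $\{\hat f(\omega)\in L^{1}(S)\}$ (Fubini applied to $\hat f\in L^{1}(\Omega;L^{1}(S))$), $\{g(\omega)\in L^{1}_{g_{1},g_{2}}(S)\}$ (from the global assumption on $g$), and the event in clause 3 of $A$. On this intersection the pointwise membership $(f(\omega),\hat f(\omega))\in a(g(\omega))$ holds by definition, hence (ii). For (ii)$\Rightarrow$(i), I start from the set $M$ of $\omega$'s on which the pointwise membership holds; since $\P(M)=1$ and $f,\hat f\in L^{1}(\Omega;L^{1}(S))$ is assumed, clauses 1 and 2 of $A$ are immediate, and clause 3 is obtained by observing that $M$ is contained in the event of clause 3.

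The hard part is strictly measurability: the event in clause 3 of $A$ and the event $M$ in (ii) are both defined via an uncountable universal quantifier over $\varphi\in W^{1,p}(S)\cap L^{\infty}(S)$, so neither is manifestly $\mathcal F$-measurable. I would eliminate this by choosing a countable subfamily $(\varphi_{k})_{k\in\mathbb{N}}$ of $W^{1,p}(S)\cap L^{\infty}(S)$ dense in $W^{1,p}(S)$ together with a truncation argument (using that both sides of the integral identity are continuous in $\varphi$ with respect to the $W^{1,p}\cap L^{\infty}$-topology, since $g$ and $\nabla f$ provide the required integrability); this reduces the universal quantifier to a countable one. The measurability of each individual $\omega$-section $\omega \mapsto \int_{S} g(\omega)|\nabla f(\omega)|^{p-2}\nabla f(\omega)\cdot\nabla\varphi_{k}\,d\lambda$ and of the sets $\{f(\omega)\in W^{1,p}(S)\cap L^{\infty}(S)\}$, $\{\hat f(\omega)\in L^{1}(S)\}$, $\{g(\omega)\in L^{1}_{g_{1},g_{2}}(S)\}$ is precisely what Appendix \ref{appendix_A} provides, and I would simply cite those results. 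Once these points are in place, the two statements of the lemma coincide on a set of probability one, which is all that is required.
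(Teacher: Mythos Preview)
Your proposal is correct and follows essentially the same route as the paper: both unpack the pointwise event $\{\omega:(f(\omega),\hat f(\omega))\in a(g(\omega))\}$ into the defining conditions of $A$, observe that the equivalence is then tautological once the standing hypotheses $f,\hat f\in L^{1}(\Omega;L^{1}(S))$ are in place, and defer the only nontrivial point (measurability of the universally quantified event) to Appendix~\ref{appendix_A}. The paper's proof is simply more compressed, citing Lemmata~\ref{lemmawelldefined1} and~\ref{lemmawelldefined2} directly rather than re-sketching their countable-density argument as you do.
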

\begin{proof} Let $f,~\hat{f} \in L^{1}(\Omega;L^{1}(S))$. Then one has 
\begin{eqnarray*}
	& & ~ 
	\{\omega:~(f(\omega),\hat{f}(\omega))\in a(g(\omega))\} \\
	& = & ~ \{ \omega:~f \in W^{1,p}(S) \cap L^{\infty}(S),~\int \limits_{S}  g| \nabla f|^{p-2}\nabla f\cdot \nabla \varphi  d \lambda = \int \limits_{S} \hat{f} \varphi d \lambda,~\forall\varphi\in W^{1,p}(S)\cap L^{\infty}(S)\},
\end{eqnarray*}
which yields, by invoking Lemma \ref{lemmawelldefined1} and Lemma \ref{lemmawelldefined2} that
the event $\{\omega:~(f(\omega),\hat{f}(\omega))\in a(g(\omega))\}$ is measurable. Moreover, the former equality also yields the equivalence of i) and ii).
\end{proof}

\begin{lemma}\label{lemmaAclosarel} Let $(f,\hat{f}) \in \A$. Then one has
\begin{align*}
\P\left(\big\{\omega \in \Omega:~(f(\omega),\hat{f}(\omega)) \in \mathfrak{a}(g(\omega))\big\}\right)=1.
\end{align*}
\end{lemma}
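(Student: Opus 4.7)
The plan is to exploit the definition of $\A$ as the closure of $A$ together with Lemma \ref{lemmaAarel} and a standard subsequence extraction that converts norm convergence in $L^{1}(\Omega;L^{1}(S))$ into $\P$-almost sure convergence in $L^{1}(S)$.

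First, since $(f,\hat{f}) \in \A$, there is a sequence $((f_{m},\hat{f}_{m}))_{m \in \mathbb{N}} \subseteq A$ with $(f_{m},\hat{f}_{m}) \to (f,\hat{f})$ in $L^{1}(\Omega;L^{1}(S))^{2}$. By definition this means $\int_{\Omega}\|f_{m}(\omega)-f(\omega)\|_{L^{1}(S)}\,d\P(\omega)\to 0$ and similarly for $\hat{f}_{m}$. Hence the scalar sequences $\|f_{m}(\cdot)-f(\cdot)\|_{L^{1}(S)}$ and $\|\hat{f}_{m}(\cdot)-\hat{f}(\cdot)\|_{L^{1}(S)}$ converge to $0$ in $L^{1}(\Omega)$, so by a standard (diagonal) subsequence extraction one can pass to a common subsequence, which I will still denote by $m$, such that
\begin{equation*}
\lim_{m\to\infty}\|f_{m}(\omega)-f(\omega)\|_{L^{1}(S)}=0 \quad \text{and} \quad \lim_{m\to\infty}\|\hat{f}_{m}(\omega)-\hat{f}(\omega)\|_{L^{1}(S)}=0
\end{equation*}
for every $\omega$ in a set $E_{0}\in\mathcal{F}$ with $\P(E_{0})=1$.

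Next, by Lemma \ref{lemmaAarel} applied to each $(f_{m},\hat{f}_{m}) \in A$, there is for every $m\in\mathbb{N}$ a measurable set $E_{m}\in\mathcal{F}$ with $\P(E_{m})=1$ such that $(f_{m}(\omega),\hat{f}_{m}(\omega))\in a(g(\omega))$ for all $\omega\in E_{m}$. Setting $E:=E_{0}\cap\bigcap_{m\in\mathbb{N}} E_{m}$, a countable intersection of full-measure sets, one obtains $E\in\mathcal{F}$ with $\P(E)=1$, and on $E$ both the convergence in $L^{1}(S)^{2}$ and the membership $(f_{m}(\omega),\hat{f}_{m}(\omega))\in a(g(\omega))$ hold simultaneously for every $m$.

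Finally, for any $\omega\in E$, the pair $(f(\omega),\hat{f}(\omega))$ is the $L^{1}(S)^{2}$-limit of the sequence $((f_{m}(\omega),\hat{f}_{m}(\omega)))_{m}\subseteq a(g(\omega))$, hence by the very definition of $\mathfrak{a}(g(\omega))$ as the closure of $a(g(\omega))$ one has $(f(\omega),\hat{f}(\omega))\in\mathfrak{a}(g(\omega))$. This shows $E\subseteq\{\omega:(f(\omega),\hat{f}(\omega))\in\mathfrak{a}(g(\omega))\}$, and since $(\Omega,\mathcal{F},\P)$ is complete the latter event is measurable and carries probability one. I expect the only delicate point to be the measurability of the event $\{\omega:(f(\omega),\hat{f}(\omega))\in\mathfrak{a}(g(\omega))\}$ itself — unlike for $a(g(\omega))$, the closure makes a direct measurability argument awkward — but completeness of $(\Omega,\mathcal{F},\P)$ sidesteps this, since we only need to exhibit a measurable full-measure subset of this event.
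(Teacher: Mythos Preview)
Your proof is correct and follows essentially the same approach as the paper: extract a subsequence to get $\P$-a.s.\ convergence in $L^{1}(S)^{2}$, invoke Lemma~\ref{lemmaAarel} to place each $(f_m(\omega),\hat f_m(\omega))$ in $a(g(\omega))$, and use the definition of $\mathfrak{a}(g(\omega))$ as a closure together with completeness of $(\Omega,\mathcal{F},\P)$ to conclude. If anything, you are more explicit than the paper about the countable intersection $E_0\cap\bigcap_m E_m$ and about why completeness handles the measurability of the event $\{\omega:(f(\omega),\hat f(\omega))\in\mathfrak a(g(\omega))\}$.
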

\begin{proof} As $(f,\hat{f})\in \A$, there is, by passing to a subsequence if necessary, a sequence $((f_{m},\hat{f}_{m}))_{m\in \mathbb{N}} \subseteq A$ such that
\begin{align*}
\lim \limits_{m \rightarrow \infty} (f_{m}(\omega),\hat{f}_{m}(\omega)) = (f(\omega),\hat{f}(\omega)),~\text{for $\P$-a.e. } \omega \in \Omega,~\text{in } L^{1}(S)^{2}.
\end{align*}
Consequently, Lemma \ref{lemmaAarel} yields that one has, up to a $\P$-nullset
\begin{eqnarray*}
	& & ~
\{\omega \in \Omega:~(f(\omega),\hat{f}(\omega)) \in \mathfrak{a}(g(\omega))\big\}\\
& = & ~ \{\omega \in \Omega:~\exists (F_{m}(\omega),\hat{F}_{m}(\omega))_{m \in \mathbb{N}} \subseteq a(g(\omega)),~\lim \limits_{m \rightarrow \infty} (F_{m}(\omega),\hat{F}_{m}(\omega)) = (f(\omega),\hat{f}(\omega)),~\text{in } L^{1}(S)^{2}\big\}\\
& \supseteq & ~ \{\omega \in \Omega:~\lim \limits_{m \rightarrow \infty} (f_{m}(\omega),\hat{f}_{m}(\omega)) = (f(\omega),\hat{f}(\omega)),~\text{in } L^{1}(S)^{2}\big\}\\
& = & ~ \Omega.
\end{eqnarray*}
Hence $\{\omega \in \Omega:~(f(\omega),\hat{f}(\omega)) \in \mathfrak{a}(g(\omega))\big\}$ is, up to a $\P$-nullset, equal to $\Omega$. Therefore this event is $\F$-measurable, because $(\Omega,\F,\P)$ is complete, and occurs with probability one.
\end{proof}

\begin{lemma}\label{propclosaarelation} Let $(f,\hat{f}) \in \A$ and assume $f \in L^{1,\infty}(\Omega;L^{1}(S))$. Then $(f,\hat{f}) \in A$.
\end{lemma}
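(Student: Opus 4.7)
The plan is to reduce the statement to a corresponding deterministic fact about $\mathfrak{a}(\gamma)$ and $a(\gamma)$ and then transfer it back to the randomized setting via the fiberwise characterizations already established in Lemmas \ref{lemmaAarel} and \ref{lemmaAclosarel}.

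First I would apply Lemma \ref{lemmaAclosarel} to the given pair $(f,\hat{f}) \in \A$ to obtain that
\begin{align*}
(f(\omega),\hat{f}(\omega)) \in \mathfrak{a}(g(\omega)) \quad \text{for $\P$-a.e.~} \omega \in \Omega.
\end{align*}
Since $f \in L^{1,\infty}(\Omega;L^{1}(S))$, we also know that $f(\omega) \in L^{\infty}(S)$ for $\P$-a.e.~$\omega \in \Omega$. Hence, outside a $\P$-nullset, $(f(\omega),\hat{f}(\omega))$ is a pair in $\mathfrak{a}(g(\omega))$ whose first component is bounded.

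The key deterministic ingredient is the statement that whenever $\gamma \in L^{1}_{g_{1},g_{2}}(S)$ and $(v,\hat{v}) \in \mathfrak{a}(\gamma)$ satisfies $v \in L^{\infty}(S)$, one actually has $(v,\hat{v}) \in a(\gamma)$. This is precisely the kind of property of the deterministic operator $a(\gamma)$ that I would expect to find (and use) from Appendix \ref{appendix_b}; it reflects the fact that $a(\gamma)$ is already defined on $W^{1,p}(S)\cap L^{\infty}(S)$ and the extension $\mathfrak{a}(\gamma)$ only enlarges the domain by unbounded elements. Granting this deterministic lemma, we conclude that $(f(\omega),\hat{f}(\omega)) \in a(g(\omega))$ for $\P$-a.e.~$\omega \in \Omega$, whereupon Lemma \ref{lemmaAarel} (applied in the other direction) yields $(f,\hat{f}) \in A$, as desired.

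The main obstacle is the deterministic reduction itself. Given a sequence $(v_{m},\hat{v}_{m}) \in a(\gamma)$ with $(v_{m},\hat{v}_{m}) \to (v,\hat{v})$ in $L^{1}(S)^{2}$ and $v \in L^{\infty}(S)$, one has to show that $v \in W^{1,p}(S)$ and that $v$ satisfies the weak formulation against every test function in $W^{1,p}(S) \cap L^{\infty}(S)$. The natural route is to exploit the monotonicity of the weighted $p$-Laplacian: plugging suitable truncations of $v_{m}-v_{m'}$ (or of $v_{m}$ itself, shifted by a constant so that Neumann boundary conditions cause no trouble) into the difference of the weak formulations and using $g_{1} \leq \gamma \leq g_{2}$ to yield a uniform $W^{1,p}(S)$-bound on $v_{m}$ from the $L^{\infty}$-bound on $v$; a weakly convergent subsequence together with Minty-type monotonicity arguments then passes to the limit in the nonlinear term. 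This is the standard mechanism for $p$-Laplacian-type equations, but carrying it out cleanly in the present Neumann and degenerate-weight context is where the bulk of the work sits, and it is exactly the kind of deterministic technicality I would expect to be deferred to Appendix \ref{appendix_b}.
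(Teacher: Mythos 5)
Your proposal is correct and follows essentially the same route as the paper: reduce fiberwise via Lemma \ref{lemmaAclosarel}, invoke the deterministic fact that a pair in $\mathfrak{a}(\gamma)$ with bounded first component already lies in $a(\gamma)$, and transfer back via Lemma \ref{lemmaAarel}. The deterministic ingredient you correctly isolate is not reproved in the paper but simply cited as \cite[Lemma 3.1]{ich} (it is the same fact used in the proof of Lemma \ref{aprop}.iii)), so your sketched Minty-type argument is not needed here.
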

\begin{proof} Invoking Lemma \ref{lemmaAclosarel} yields that $(f(\omega),\hat{f}(\omega)) \in \mathfrak{a}(g(\omega))$ for $\P$-a.e. $\omega \in \Omega$. As also $f(\omega) \in L^{\infty}(S)$ for $\P$-a.e. $\omega \in \Omega$, one has, by virtue of \cite[Lemma 3.1]{ich} that $(f(\omega),\hat{f}(\omega)) \in a(g(\omega))$ for $\P$-a.e. $\omega \in \Omega$. This yields the claim by Lemma \ref{lemmaAarel}. 
\end{proof}

\begin{proposition}\label{lemmacaversion} Let $(f,\hat{f}),~(h,\hat{h}) \in \A$, $\alpha \in (0,\infty)$, $q \in [1,\infty]$ and assume that $f-h+\alpha (\hat{f}-\hat{h}) \in L^{q}(S)$ with probability one. Then one has
\begin{align}
\label{lemmaaccreeq}
\P \left(\big\{\omega \in \Omega:~||f(\omega)-h(\omega)||_{L^{q}(S)}\leq ||f(\omega)-h(\omega)+\alpha (\hat{f}(\omega)-\hat{h}(\omega))||_{L^{q}(S)} \big\} \right) =1.
\end{align}
Particularly, $A$ as well as $\A$, are accretive.
\end{proposition}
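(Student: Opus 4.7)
The plan is to reduce the inequality (\ref{lemmaaccreeq}) to a pointwise (in $\omega$) statement about the deterministic operator $\mathfrak{a}(g(\omega))$, and then use complete accretivity of $\mathfrak{a}(\gamma)$ (which is the deterministic fact proven in \cite{main} and collected in Appendix \ref{appendix_b}).

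First I would apply Lemma \ref{lemmaAclosarel} to both pairs $(f,\hat f)$ and $(h,\hat h)$. This yields, outside a $\P$-nullset, that
\begin{equation*}
(f(\omega),\hat f(\omega)) \in \mathfrak{a}(g(\omega)) \quad \text{and} \quad (h(\omega),\hat h(\omega)) \in \mathfrak{a}(g(\omega)).
\end{equation*}
Next I would invoke complete accretivity of the deterministic operator $\mathfrak{a}(g(\omega))$ (which is closed, hence inherits complete accretivity from $a(g(\omega))$; this is the content of the deterministic analogue in \cite{main}, restated in Appendix \ref{appendix_b}). Since $g(\omega) \in L^{1}_{g_{1},g_{2}}(S)$ with probability one, this operator is completely accretive for $\P$-a.e. $\omega$, so that
\begin{equation*}
f(\omega)-h(\omega) \ll f(\omega)-h(\omega)+\alpha(\hat f(\omega)-\hat h(\omega))
\end{equation*}
for $\P$-a.e. $\omega$.

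Now I would invoke the norm-comparison property recorded in Remark \ref{remarkca}: whenever $f_{1} \ll f_{2}$ and $f_{2} \in L^{q}(S)$, one has $\|f_{1}\|_{L^{q}(S)} \leq \|f_{2}\|_{L^{q}(S)}$ for every $q \in [1,\infty]$. By hypothesis the right-hand side $f-h+\alpha(\hat f-\hat h)$ lies in $L^{q}(S)$ with probability one, so this directly produces (\ref{lemmaaccreeq}).

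For the final sentence, specialising to $q=1$ and integrating the $\P$-a.s.\ pointwise inequality over $\Omega$ yields
\begin{equation*}
\|f-h\|_{L^{1}(\Omega;L^{1}(S))} \leq \|f-h+\alpha(\hat f-\hat h)\|_{L^{1}(\Omega;L^{1}(S))},
\end{equation*}
which is precisely accretivity of $\A$ (note the hypothesis $f-h+\alpha(\hat f-\hat h) \in L^{1}(S)$ a.s.\ is automatic here because both $\hat f,\hat h \in L^{1}(\Omega;L^{1}(S))$ and similarly for $f,h$). Accretivity of $A$ then follows since $A \subseteq \A$. The main difficulty is really only organisational: one needs to know a priori that $\mathfrak{a}(\gamma)$ is completely accretive for every admissible weight $\gamma$, which is why the reduction to the deterministic result via Lemma \ref{lemmaAclosarel} is the essential step; once that is done the argument is a routine transfer of $\omega$-wise inequalities to the randomized setting.
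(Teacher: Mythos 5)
Your proposal is correct and follows essentially the same route as the paper's proof: reduce to the deterministic setting via Lemma \ref{lemmaAclosarel}, apply complete accretivity of $\mathfrak{a}(g(\omega))$ (Lemma \ref{aprop}.i)) together with the norm-comparison property of Remark \ref{remarkca}, and then integrate the $q=1$ case to obtain accretivity of $\A$ and hence of $A$. No gaps.
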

\begin{proof} One has, by virtue of Lemma \ref{lemmaAclosarel}, that $(f(\omega),\hat{f}(\omega)),(h(\omega),\hat{h}(\omega)) \in \mathfrak{a}(g(\omega))$ for $\P$-a.e. $\omega \in \Omega$.\\
Consequently, (\ref{lemmaaccreeq}) follows by invoking Remark \ref{remarkca} and Lemma \ref{aprop}.i).\\ 
Moreover, (\ref{lemmaaccreeq}) yields particularly that $||f-h||_{L^{1}(\Omega;L^{1}(S))}\leq ||f-h+\alpha(\hat{f}-\hat{h})||_{L^{1}(\Omega;L^{1}(S))}$, i.e. $\A$ is accretive. This obviously implies that $A$ is accretive as well.
\end{proof} 

\begin{lemma}\label{measlemma} Assume that $g$ is simple, i.e. there is an $m \in \mathbb{N}$, $\gamma_{1},...,\gamma_{m} \in L^{1}(S)$ and disjoint sets $\Omega_{1},...,\Omega_{m} \in \mathcal{F}$, such that $\bigcup \limits_{k=1}\limits^{m}\Omega_{k}=\Omega$ and
	\begin{align}
	\label{measlemmaeq1}
	g(\cdot)= \sum \limits_{k=1} \limits^{m} \gamma_{k} \id_{\Omega_{k}}(\cdot).
	\end{align}
	Moreover, let $h \in L^{1,\infty}(\Omega;L^{1}(S))$. Then the mapping defined by
	\begin{align*}
	\Omega \ni \omega \mapsto (Id+a(g(\omega)))^{-1}h(\omega)
	\end{align*}
	is $\mathcal{F}$-$\mathfrak{B}(L^{1}(S))$-measurable.
\end{lemma}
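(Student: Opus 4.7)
The strategy is to exploit the simple structure of $g$ in order to reduce the problem to a finite union of deterministic cases. Since $\Omega=\bigsqcup_{k=1}^{m}\Omega_{k}$ and $g(\omega)=\gamma_{k}$ on $\Omega_{k}$, one has pointwise
\begin{align*}
(Id+a(g(\omega)))^{-1}h(\omega)=\sum_{k=1}^{m}\id_{\Omega_{k}}(\omega)\,(Id+a(\gamma_{k}))^{-1}h(\omega),
\end{align*}
whenever the resolvents are well defined. As measurability is preserved under finite sums and under multiplication with indicators of $\mathcal{F}$-measurable sets, it thus suffices to prove that for each fixed $k\in\{1,\dots,m\}$ the map $\omega\mapsto (Id+a(\gamma_{k}))^{-1}h(\omega)$ is $\mathcal{F}$-$\mathfrak{B}(L^{1}(S))$-measurable.

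For each such $k$, I would proceed in three steps. First, invoke the results collected in Appendix \ref{appendix_b} (essentially the range condition used in \cite{main}) to guarantee that $L^{\infty}(S)\subseteq R(Id+a(\gamma_{k}))$. Since $h\in L^{1,\infty}(\Omega;L^{1}(S))$ means $h(\omega)\in L^{\infty}(S)$ with probability one, the element $(Id+a(\gamma_{k}))^{-1}h(\omega)$ is well defined $\P$-a.s. On the $\P$-null set where this fails, the map can be redefined as zero; this does not affect measurability because $(\Omega,\mathcal{F},\P)$ is complete.

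Second, the operator $a(\gamma_{k})$ is accretive on $L^{1}(S)$ (even completely accretive, cf.\ Remark \ref{remarkca} and the deterministic analogue of Proposition \ref{lemmacaversion} to be stated in Appendix \ref{appendix_b}). Therefore the resolvent $(Id+a(\gamma_{k}))^{-1}$ is non-expansive on its domain with respect to $\|\cdot\|_{L^{1}(S)}$, and in particular continuous as a map from $(R(Id+a(\gamma_{k})),\|\cdot\|_{L^{1}(S)})$ into $(L^{1}(S),\|\cdot\|_{L^{1}(S)})$. The composition of the (strongly) measurable map $\omega\mapsto h(\omega)$ with this continuous resolvent is then itself strongly measurable, which closes the argument.

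The main obstacle is not conceptual but bookkeeping: one has to have the deterministic range condition $L^{\infty}(S)\subseteq R(Id+a(\gamma_{k}))$ and the non-expansiveness of $(Id+a(\gamma_{k}))^{-1}$ available from Appendix \ref{appendix_b}, and one has to be careful that the $\P$-null set on which $h(\omega)\notin L^{\infty}(S)$ may depend on $h$. Both issues are handled by completeness of $(\Omega,\mathcal{F},\P)$ together with the standard facts about resolvents of (completely) accretive operators.
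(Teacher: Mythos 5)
Your proof is correct and follows essentially the same route as the paper: decompose via the partition $\Omega=\bigcup_{k}\Omega_{k}$, use the range condition $L^{\infty}(S)\subseteq R(Id+a(\gamma_{k}))$ for well-definedness, and conclude by composing the measurable map $h$ with a continuous resolvent. The only (harmless) difference is that the paper first replaces $(Id+a(\gamma_{k}))^{-1}$ by $(Id+\mathfrak{a}(\gamma_{k}))^{-1}$ on $L^{\infty}(S)$ (Lemma \ref{aprop}.iii)) so as to work with a resolvent that is non-expansive on all of $L^{1}(S)$, whereas you argue directly with the non-expansiveness of $(Id+a(\gamma_{k}))^{-1}$ on its range equipped with the trace Borel structure, which is equally valid.
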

\begin{proof} Let $h \in L^{1,\infty}(\Omega;L^{1}(S))$ be arbitrary but fixed and assume that $g$ is given by (\ref{measlemmaeq1}). Moreover, assume w.l.o.g. that none of the $\Omega_{k}$ is a $\P$-nullset.\\ 
	Since $g_{1} \leq g \leq g_{2}$ a.e. on $S$ with probability one, it is clear that $g_{1} \leq \gamma_{k} \leq g_{2}$ a.e. on $S$ for each $k=1,...,m$.\\
	Moreover, as $h(\omega) \in L^{\infty}(S)$ for $\P$-a.e. $\omega \in \Omega$, Lemma \ref{aprop}.ii) yields that the mapping $\varphi:\Omega\rightarrow L^{1}(S)$ defined by
	\begin{align*}
	\varphi(\omega):=(Id+a(g(\omega)))^{-1}h(\omega),~\text{for $\P$-a.e. } \omega \in \Omega
	\end{align*}
	is well-defined.\\ 
	For a given $k \in \{1,...,m\}$ and all $\omega \in \Omega_{k}$ except for a $\P$-nullset, one has $\varphi(\omega)=(Id+a(\gamma_{k}))^{-1}h(\omega)$. Consequently, Lemma \ref{aprop}.iii) yields
	\begin{align*}
	\varphi(\omega)= \sum \limits_{k=1} \limits^{m} \id_{\Omega_{k}}(\omega)(Id+a(\gamma_{k}))^{-1}h(\omega) = \sum \limits_{k=1} \limits^{m} \id_{\Omega_{k}}(\omega) (Id+\mathfrak{a}(\gamma_{k}))^{-1}h(\omega),~\text{for $\P$-a.e. } \omega \in \Omega.
	\end{align*}
	Moreover, $(Id+\mathfrak{a}(\gamma_{k}))^{-1}$ is $L^{1}(S)$-continuous for all $k=1,...,m$, see Lemma \ref{aprop}.iv). Hence, the mapping $(Id+\mathfrak{a}(\gamma_{k}))^{-1}$ is $\mathfrak{B}(L^{1}(S))-\mathfrak{B}(L^{1}(S))$-measurable. Since $h \in L^{1}(\Omega;L^{1}(S))$ it follows that $\Omega \ni \omega \mapsto (Id+\mathfrak{a}(\gamma_{k}))^{-1}h(\omega)$ is $\mathcal{F}$-$\mathfrak{B}(L^{1}(S))$-measurable as it is the composition of measurable functions. Consequently, $\Omega \ni \omega \mapsto \id_{\Omega_{k}}(\omega)(Id+\mathfrak{a}(\gamma_{k}))^{-1}h(\omega) $ is also $\mathcal{F}$-$\mathfrak{B}(L^{1}(S))$-measurable, which yields the measurability of $\varphi$.
\end{proof}

\begin{theorem}\label{rctheorem} One has
	\begin{align}
	\label{denseeq}
	L^{1,\infty}(\Omega;L^{1}(S)) \subseteq R(Id+A).
	\end{align}
	Consequently, the following assertions hold.
	\begin{enumerate}
		\item $R(Id+A)$ is a dense subset of $L^{1}(\Omega;L^{1}(S))$.
		\item $\A$ is m-accretive.
	\end{enumerate}
\end{theorem}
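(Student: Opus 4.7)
The plan is to reduce the inclusion $L^{1,\infty}(\Omega;L^{1}(S)) \subseteq R(Id+A)$ to the deterministic range condition for $a(g(\omega))$ solved $\omega$-wise, and then to stitch the pointwise solutions into a measurable function by approximating $g$ by simple weight functions and applying Lemma \ref{measlemma}. Assertions i) and ii) will then follow from the density of $L^{1,\infty}(\Omega;L^{1}(S))$ in $L^{1}(\Omega;L^{1}(S))$ (stated as being proven in Appendix \ref{appendix_b}) together with a standard nonlinear semigroup argument.

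For the main inclusion, fix $h \in L^{1,\infty}(\Omega;L^{1}(S))$. For $\P$-a.e. $\omega$ one has $g(\omega) \in L^{1}_{g_{1},g_{2}}(S)$ and $h(\omega) \in L^{\infty}(S)$, so Lemma \ref{aprop}.ii) produces a unique $f(\omega) := (Id + a(g(\omega)))^{-1} h(\omega) \in D(a(g(\omega)))$. The crucial step is to show that $\omega \mapsto f(\omega)$ is $\F$-$\mathfrak{B}(L^{1}(S))$-measurable. I would approximate $g$ by a sequence of simple functions $(g_{m})_{m \in \mathbb{N}}$ with values in $L^{1}_{g_{1},g_{2}}(S)$ such that $g_{m} \to g$ in $L^{1}(\Omega;L^{1}(S))$; such an approximation is standard since $g$ is strongly measurable into the separable space $L^{1}(S)$ and the approximants may be chosen inside the closed set $L^{1}_{g_{1},g_{2}}(S)$. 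After passing to a subsequence, $g_{m}(\omega) \to g(\omega)$ in $L^{1}(S)$ $\P$-a.s. Setting $f_{m}(\omega) := (Id + a(g_{m}(\omega)))^{-1} h(\omega)$, Lemma \ref{measlemma} shows each $f_{m}$ is measurable, and continuous dependence of the deterministic resolvent on the weight (expected to be supplied by Appendix \ref{appendix_b}) yields $f_{m}(\omega) \to f(\omega)$ in $L^{1}(S)$ $\P$-a.s., so $f$ is measurable. Since $(0,0) \in a(\gamma)$ for every admissible $\gamma$ (the zero function satisfies the variational identity trivially), accretivity of $a(g(\omega))$ gives $||f(\omega)||_{L^{1}(S)} \leq ||h(\omega)||_{L^{1}(S)}$ $\P$-a.s., so both $f$ and $h-f$ lie in $L^{1}(\Omega;L^{1}(S))$. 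As $(f(\omega), h(\omega)-f(\omega)) \in a(g(\omega))$ a.s., Lemma \ref{lemmaAarel} delivers $(f, h-f) \in A$, i.e.\ $h \in R(Id+A)$.

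Assertion i) is then immediate: $L^{1,\infty}(\Omega;L^{1}(S))$ is dense in $L^{1}(\Omega;L^{1}(S))$ by the result in Appendix \ref{appendix_b}, and the just-proven inclusion forces $R(Id+A)$ to be dense as well. For ii), Proposition \ref{lemmacaversion} yields that $\A$ is accretive and $\A$ is closed by its very definition as the closure of $A$. The inclusion $R(Id+A) \subseteq R(Id+\A)$ combined with density of $R(Id+A)$ now implies m-accretivity via the standard argument: the $1$-Lipschitz map $(Id+\A)^{-1}$ extends uniquely by continuity from its dense domain to all of $L^{1}(\Omega;L^{1}(S))$, and closedness of $\A$ certifies that each extended preimage, together with its corresponding image, still lies in the graph of $\A$, so $R(Id+\A) = L^{1}(\Omega;L^{1}(S))$.

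The principal obstacle is the measurability step in the second paragraph. It hinges on continuous dependence of $(Id+a(\gamma))^{-1}$ on $\gamma$, which the proof must draw from Appendix \ref{appendix_b}; once that auxiliary result is in hand, the approximation via simple $g_{m}$ and Lemma \ref{measlemma} assembles the pointwise resolvents into a measurable global solution, after which the integrability bound and the two corollaries are routine.
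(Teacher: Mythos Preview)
Your proposal follows the paper's proof essentially step for step: pointwise definition of $f(\omega)$ via the deterministic resolvent, approximation of $g$ by simple weights, Lemma \ref{measlemma} for the approximants, passage to the limit for measurability, the accretivity bound for integrability, and Lemma \ref{lemmaAarel} to land in $A$; the derivations of i) and ii) are also the paper's (the paper cites \cite[Prop.\ 2.18]{BenilanBook} for the closedness of $R(Id+\A)$, which is equivalent to your resolvent-extension argument). The only point to flag is that the continuous-dependence result actually supplied in Appendix \ref{appendix_b} (Lemma \ref{fancylemma}) delivers only \emph{weak} $L^{1}(S)$-convergence of $(Id+a(\gamma_{m}))^{-1}h$ to $(Id+a(\gamma))^{-1}h$, not the norm convergence your phrasing suggests---but this does not harm your argument, since the $\P$-a.s.\ weak limit of $\F$-$\mathfrak{B}(L^{1}(S))$-measurable maps into the separable space $L^{1}(S)$ is still measurable by Pettis' theorem.
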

\begin{proof} Lemma \ref{denselemma} yields that (\ref{denseeq}) implies i). Moreover, one trivially has $R(Id+A)\subseteq R(Id+\A)$. Consequently, (\ref{denseeq}) implies that $R(Id+\A)$ is also dense. Moreover, as $\A$ is accretive and closed, one has that $R(Id+\A)$ is closed, cf. \cite[Proposition 2.18]{BenilanBook}. Consequently, (\ref{denseeq}) implies i) as well as ii).\\
	Now prove inclusion (\ref{denseeq}). Let $h \in L^{1,\infty}(\Omega;L^{1}(S))$.\\
	Let $f:\Omega \rightarrow L^{1}(S)$ be defined by $f(\omega):=(Id+a(g(\omega)))^{-1}h(\omega)$ for $\P$-a.e. $\omega \in \Omega$. This is well-defined by Lemma \ref{aprop}.ii).\\
	Now introduce $\hat{f}:\Omega \rightarrow L^{1}(S)$ by $\hat{f}(\omega):=a(g(\omega))f(\omega)$ for $\P$-a.e. $\omega \in \Omega$.\\
	Trivially, $h=f+\hat{f}$ by construction. Consequently the claim follows if $(f,\hat{f}) \in A$. Proving this result is divided in the following steps.
	\begin{compactenum}[(I)]
		\item $f$ as well as $\hat{f}$ are $\mathcal{F}$-$\mathfrak{B}(L^{1}(S))$-measurable.
		\item $f,~\hat{f} \in L^{1}(\Omega;L^{1}(S))$.
		\item $(f,\hat{f}) \in A$.
	\end{compactenum}
	Proof of (I). Since $\hat{f}=h-f$ it suffices to prove that $f$ is $\mathcal{F}$-$\mathfrak{B}(L^{1}(S))$-measurable.\\
	As $g \in L^{1}_{g_{1},g_{2}}(\Omega;L^{1}(S))$, there is a sequence of simple functions $(\gamma_{m})_{m \in \mathbb{N}} \subseteq L^{1}_{g_{1},g_{2}}(\Omega;L^{1}(S))$ such that $\lim \limits_{m \rightarrow \infty}\gamma_{m}(\omega)=g(\omega)$ in $L^{1}(S)$ for $\P$-a.e. $\omega \in \Omega$.\\ 
	Consequently, it follows by virtue of Lemma \ref{fancylemma} that
	\begin{align*}
	f(\omega)=(Id+a(g(\omega)))^{-1}h(\omega) = \wlim \limits_{m \rightarrow \infty} (Id+a(\gamma_{m}(\omega)))^{-1}h(\omega) \text{, in } L^{1}(S) \text{ for $\P$-a.e. } \omega \in \Omega,
	\end{align*}
	i.e. $f$ is, by Lemma \ref{measlemma}, almost surely the $L^{1}(S)$-weak limit of $\mathcal{F}$-$\mathfrak{B}(L^{1}(S))$-measurable functions and consequently it is itself $\mathcal{F}$-$\mathfrak{B}(L^{1}(S))$-measurable.\\ 
	Proof of (II). Since particularly $h \in L^{1}(\Omega;L^{1}(S))$ it suffices to prove that $f \in L^{1}(\Omega;L^{1}(S))$.\\
	The needed measurability condition has been proven in (I).\\ 
	As $(f(\omega),\hat{f}(\omega)) \in a(g(\omega))$ for $\P$-a.e. $\omega \in \Omega$ and as $a(g(\omega))$ is completely accretive, one obtains in particular 
	\begin{align*}
	\int \limits_{S}|f(\omega)| d \lambda \leq \int \limits_{S}|f(\omega)+\hat{f}(\omega)| d \lambda = \int \limits_{S} |h(\omega)|d\lambda \text{ for $\P$-a.e. } \omega \in \Omega.
	\end{align*}
	This obviously implies $\int \limits_{\Omega} ||f(\omega)||_{L^{1}(S)}d\P(\omega)\leq \int \limits_{\Omega} ||h(\omega)||_{L^{1}(S)}d\P(\omega)<\infty$, i.e. $f \in L^{1}(\Omega;L^{1}(S))$.\\ 
	Proof of (III). One has $f,\hat{f} \in L^{1}(\Omega;L^{1}(S))$ and trivially $(f(\omega),\hat{f}(\omega)) \in a(g(\omega))$ $\P$-a.e. $\omega \in \Omega$, which yields (III) by Lemma \ref{lemmaAarel}.
\end{proof}

\begin{lemma}\label{lemmadomaindense} $D(A)$ as well as $D(\A)$ are dense subsets of $(L^{1}(\Omega;L^{1}(S)),||\cdot||_{L^{1}(\Omega;L^{1}(S))})$.
\end{lemma}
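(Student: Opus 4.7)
Since $\A$ is the closure of $A$, trivially $D(A) \subseteq D(\A)$, so it suffices to show that $D(A)$ is dense in $L^{1}(\Omega;L^{1}(S))$. The plan is a resolvent-based argument combined with a pointwise-in-$\omega$ energy estimate.

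First I would observe that the entire construction carried out in the proof of Theorem \ref{rctheorem} goes through verbatim with $g$ replaced by $\alpha g$ for any $\alpha \in (0,\infty)$, since $\alpha g \in L^{1}_{\alpha g_{1},\alpha g_{2}}(\Omega;L^{1}(S))$ still satisfies the standing hypothesis (with updated constants $\alpha g_{1}, \alpha g_{2}$). This yields $L^{1,\infty}(\Omega;L^{1}(S)) \subseteq R(\Id+\alpha A)$ for every $\alpha>0$, so that the resolvent $J_{\alpha} := (\Id+\alpha A)^{-1}$ is a well-defined map $L^{1,\infty}(\Omega;L^{1}(S)) \to D(A)$.

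Next I would consider the class
\begin{align*}
H:=\{ h \in L^{1,\infty}(\Omega;L^{1}(S)) : \P(h \in W^{1,p}(S))=1\}.
\end{align*}
For $h \in H$, setting $f_{\alpha}:=J_{\alpha}h$ and $\hat{f}_{\alpha}:=\alpha^{-1}(h-f_{\alpha})$, Lemma \ref{lemmaAarel} gives $(f_{\alpha}(\omega),\hat{f}_{\alpha}(\omega)) \in a(g(\omega))$ for $\P$-a.e.\ $\omega$. Plugging the admissible test function $\varphi=f_{\alpha}(\omega)-h(\omega)$, which lies in $W^{1,p}(S)\cap L^{\infty}(S)$ $\P$-a.s.\ precisely because $h \in H$, into the weak equation and absorbing the cross term via Young's inequality would yield the pointwise estimate
\begin{align*}
\| f_{\alpha}(\omega) - h(\omega)\|_{L^{2}(S)}^{2} \leq \frac{\alpha}{p} \int_{S} g(\omega) |\nabla h(\omega)|^{p} d\lambda,
\end{align*}
whose right-hand side is finite $\P$-a.s.\ since $g \leq g_{2}$ and $h(\omega) \in W^{1,p}(S)$ a.s. Combined with the uniform bound $\|f_{\alpha}(\omega)\|_{L^{\infty}(S)} \leq \|h(\omega)\|_{L^{\infty}(S)}$ obtained from Proposition \ref{lemmacaversion} applied with $(0,0)\in A$ and $q=\infty$, dominated convergence on $\Omega$ upgrades the pointwise $L^{1}(S)$-convergence $f_{\alpha}(\omega) \to h(\omega)$ to $f_{\alpha} \to h$ in $L^{1}(\Omega;L^{1}(S))$, so $H \subseteq \overline{D(A)}$.

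Finally, I would verify that $H$ is dense in $L^{1}(\Omega;L^{1}(S))$ by a truncation-and-mollification argument: $u_{k}:=\tau_{k}(u) \in L^{1,\infty}(\Omega;L^{1}(S))$ converges to $u$ by dominated convergence, and pointwise-in-$\omega$ convolution of $u_{k}(\omega)$ with a standard mollifier (after an appropriate extension past $\partial S$) produces $u_{k,\delta}(\omega) \in C^{\infty}(\overline{S})$ with $\|u_{k,\delta}(\omega)\|_{L^{\infty}(S)} \leq k$ and $u_{k,\delta}(\omega) \to u_{k}(\omega)$ in $L^{1}(S)$ for $\P$-a.e.\ $\omega$; measurability of $\omega \mapsto u_{k,\delta}(\omega)$ is preserved since convolution is a bounded linear operator on $L^{1}(S)$. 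A diagonal selection $\alpha_{n} \to 0^{+}$, $k_{n}\to \infty$, $\delta_{n}\to 0^{+}$ then yields an approximating sequence in $D(A)$ for $u$. The main obstacle I expect is coordinating the pointwise-in-$\omega$ energy estimate with the measurability of the mollified approximants, so that the admissibility of $\varphi=f_{\alpha}(\omega)-h(\omega)$ and the existence of a uniform dominating function for the final dominated convergence step both fit together cleanly.
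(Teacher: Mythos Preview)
Your argument is correct but takes a different, somewhat longer route than the paper's. The paper also reduces to showing $\tau(L^{1}(\Omega;L^{1}(S)))\subseteq\overline{D(A)}$, but then, rather than re-running the range condition for every $\alpha>0$, it simply uses the already established m-accretivity of $\A$ (Theorem~\ref{rctheorem}.ii) to produce $(f_{m},\hat f_{m})\in\A$ with $h=f_{m}+\tfrac{1}{m}\hat f_{m}$, and uses Proposition~\ref{lemmacaversion} together with Lemma~\ref{propclosaarelation} to conclude $(f_{m},\hat f_{m})\in A$. For the convergence $f_{m}\to h$, the paper invokes the deterministic result Lemma~\ref{deterdomdenselemma}, which gives $(\Id+\tfrac{1}{m}a(\gamma))^{-1}h\to h$ in $L^{1}(S)$ for \emph{any} $h\in L^{\infty}(S)$, without any Sobolev regularity assumption on $h$. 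This is the key economy: because Lemma~\ref{deterdomdenselemma} does not require $h\in W^{1,p}(S)$, truncation alone suffices and the entire mollification layer you introduce is unnecessary. Your energy estimate, by contrast, forces $h(\omega)\in W^{1,p}(S)$ a.s., which is why you need the intermediate class $H$ and the mollification/extension step. A minor point: for the dominated convergence step on $\Omega$, the $L^{\infty}$ bound $\|f_{\alpha}(\omega)\|_{L^{\infty}(S)}\leq\|h(\omega)\|_{L^{\infty}(S)}$ does not by itself give an $L^{1}(\Omega)$ dominant unless $\|h(\cdot)\|_{L^{\infty}(S)}\in L^{1}(\Omega)$; the cleaner dominant is $\|f_{\alpha}(\omega)-h(\omega)\|_{L^{1}(S)}\leq 2\|h(\omega)\|_{L^{1}(S)}$, which follows from Proposition~\ref{lemmacaversion} with $q=1$ (and in the paper's setting is just $2k\lambda(S)$ since $h=\tau_{k}(\tilde h)$).
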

\begin{proof} As $D(A) \subseteq D(\mathcal{A})$, it suffices to prove the claim for $D(A)$. Moreover, Lemma \ref{denselemma} yields that it suffices to prove that
\begin{align*} 
\tau (L^{1}(\Omega;L^{1}(S))) \subseteq \overline{D(A)}^{L^{1}(\Omega;L^{1}(S))}.
\end{align*}
Let $h \in \tau (L^{1}(\Omega;L^{1}(S))) $ and introduce $k \in (0,\infty)$, $\tilde{h} \in L^{1}(\Omega;L^{1}(S))$ such that $h=\tau_{k}(\tilde{h})$.\\
As $\A$ is m-accretive there is for each $m \in \mathbb{N}$ a uniquely determined pair of functions $(f_{m},\hat{f}_{m})\in \A$, such that
\begin{align*}
h=f_{m}+\frac{1}{m}\hat{f}_{m}.
\end{align*}
Moreover, the last equation yields, by observing that obviously $(0,0)\in\A$ and by recalling Proposition \ref{lemmacaversion} that
\begin{align*}
||f_{m}(\omega)||_{L^{\infty}(S)} \leq ||f_{m}(\omega)+\frac{1}{m}\hat{f}_{m}(\omega)||_{L^{\infty}(S)}  =||h(\omega)||_{L^{\infty}(S)}\leq k,~\forall m \in \mathbb{N}~\text{ and for $\P$-a.e. } \omega \in \Omega.
\end{align*}
Consequently, one has in particular $f_{m} \in L^{1,\infty}(\Omega;L^{1}(S))$ and hence it follows, by invoking Lemma \ref{propclosaarelation} that $(f_{m},\hat{f}_{m})\in A$ for each $m \in \mathbb{N}$.\\
Hence the claim follows if one proves that
\begin{align}
\label{densedomainlemmaproof1}
\lim \limits_{m \rightarrow \infty} f_{m}=h \text{, in } L^{1}(\Omega;L^{1}(S)).
\end{align}
Firstly, $(f_{m},\hat{f}_{m})\in A$ yields  $(f_{m}(\omega),\hat{f}_{m}(\omega))\in a(g(\omega))$ for $\P$-a.e. $\omega \in \Omega$, cf. Lemma \ref{lemmaAarel}. Moreover, $h(\omega)\in L^{\infty}(S),~h(\omega)=f_{m}(\omega)+\frac{1}{m}\hat{f}(\omega)$, i.e. $f_{m}(\omega)=(Id+\frac{1}{m}a(g(\omega)))^{-1}h(\omega)$ for all $m \in \mathbb{N}$ and $\P$-a.e. $\omega \in \Omega$. Consequently, one obtains by virtue of Lemma \ref{deterdomdenselemma} that
\begin{align*}
\lim \limits_{m \rightarrow \infty} ||f_{m}(\omega)-h(\omega)||_{L^{1}(S)}=0, \text{ for $\P$-a.e. } \omega \in \Omega.
\end{align*} 
Finally, observe that $||f_{m}(\omega)-h(\omega)||_{L^{1}(S)} \leq 2k \lambda(S)$ for all $ m \in \mathbb{N}$, and $\P$-a.e. $\omega \in \Omega$, which yields (\ref{densedomainlemmaproof1}), by virtue of dominated convergence.
\end{proof}

\begin{theorem} Let $u \in L^{1}(\Omega;L^{1}(S))$. The evolution equation (\ref{eveqintro2}) has, for any $u \in L^{1}(\Omega;L^{1}(S))$, a uniquely determined mild solution.
\end{theorem}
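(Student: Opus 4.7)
The plan is to invoke Theorem \ref{lemmamsg} directly, with the Banach space $X=L^{1}(\Omega;L^{1}(S))$ equipped with its canonical norm, and with the operator $B:=\A$. Indeed, the statement of the theorem is exactly the conclusion of Theorem \ref{lemmamsg} transcribed into our setting, so the task reduces to checking the three hypotheses: that $X$ is a Banach space, that $\A$ is m-accretive, and that $D(\A)$ is dense in $X$.

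That $(L^{1}(\Omega;L^{1}(S)),\|\cdot\|_{L^{1}(\Omega;L^{1}(S))})$ is a Banach space is standard, since $L^{1}(S)$ is a Banach space and Bochner $L^{1}$-spaces over a complete probability space into a Banach target are themselves complete. The m-accretivity of $\A$ is precisely Theorem \ref{rctheorem}.ii), which combines the accretivity of $\A$ (established in Proposition \ref{lemmacaversion}) with the range condition $R(\Id+\A)=L^{1}(\Omega;L^{1}(S))$ that follows from the inclusion $L^{1,\infty}(\Omega;L^{1}(S))\subseteq R(\Id+A)$, the density of $L^{1,\infty}(\Omega;L^{1}(S))$, and the closedness of $R(\Id+\A)$ for a closed accretive operator.

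Finally, the denseness of $D(\A)$ in $X$ is Lemma \ref{lemmadomaindense}, which gives even the stronger assertion that $D(A)\subseteq D(\A)$ is dense. With all three hypotheses verified, Theorem \ref{lemmamsg} applies and yields, for any $u\in L^{1}(\Omega;L^{1}(S))$, the existence of a unique mild solution $U\in C([0,\infty);L^{1}(\Omega;L^{1}(S)))$ of (\ref{eveqintro2}). There is no genuine obstacle here: the real work was done in Theorem \ref{rctheorem} and Lemma \ref{lemmadomaindense}, so this result is essentially a one-line corollary of them together with the abstract nonlinear semigroup theorem cited from \cite{BenilanBook}.
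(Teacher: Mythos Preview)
Your proof is correct and follows exactly the same approach as the paper: the paper's proof is the single line ``The claim follows from Theorem \ref{lemmamsg}, Theorem \ref{rctheorem} and Lemma \ref{lemmadomaindense},'' and you have simply spelled out how those three results feed into the hypotheses of Theorem \ref{lemmamsg}.
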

\begin{proof} The claim follows from Theorem \ref{lemmamsg}, Theorem \ref{rctheorem} and Lemma \ref{lemmadomaindense}.
\end{proof}

\section{Strong solutions of the randomized weighted $p$-Laplacian evolution equation with Neumann boundary conditions}
\label{section_ss} 
Throughout everything which follows $(T(t))_{t \geq 0}$ denotes the semigroup of mild solutions of (\ref{eveqintro2}), i.e. \linebreak$T(t):L^{1}(\Omega;L^{1}(S)) \rightarrow L^{1}(\Omega;L^{1}(S))$ is such that $T(\cdot)u \in C([0,\infty);L^{1}(\Omega;L^{1}(S)))$ is the uniquely determined mild solution of (\ref{eveqintro2}), fulfilling $T(0)u=u$, for any $u \in L^{1}(\Omega;L^{1}(S))$.\\
Now let $\gamma \in L^{1}_{g_{1},g_{2}}(S)$. Then, throughout everything which follows, $(T(t,\gamma))_{t \geq 0}$ denotes the semigroup of mild solutions of
\begin{align*}
0 \in V^{\prime}(t)+\mathfrak{a}(\gamma) V(t),~ \text{for a.e. } t \in (0,\infty),~ V(0)=v \in L^{1}(S). 
\end{align*}
The existence of mild solutions is clear, because $\mathfrak{a}(\gamma)$ is m-accretive and has dense domain.\\

The purpose of this section is to establish that $t \mapsto T(t)u$ is, for any $u \in L^{1}(\Omega;L^{1}(S))$, not only a mild, but also a strong solution of (\ref{eveqintro2}).\\
The following theorem serves as a guideline for this section:

\begin{theorem}\label{lemmassg} Let $(X,||\cdot||)$ be a Banach space and let $B:D(B)\rightarrow 2^{X}$ be closed and m-accretive. Moreover, assume that $D(B)$ is dense in $X$ and let $U \in C([0,\infty);X)$ be the mild solution of
\begin{align}
\label{eveqgeneral}
0 \in U^{\prime}(t)+BU(t),~\text{for a.e. } t \in (0,\infty),~ U(0)=u \in X.
\end{align}
If $U \in W^{1,1}_{\text{Loc}}((0,\infty);X)$, one has that $U$ is also the uniquely determined strong solution of (\ref{eveqgeneral}).
\end{theorem}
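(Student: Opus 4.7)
The plan is to verify that at almost every $t > 0$ the pair $(U(t), -U'(t))$ belongs to $B$. Combined with the assumed regularity $U \in C([0,\infty);X) \cap W^{1,1}_{\text{Loc}}((0,\infty);X)$ and the initial condition $U(0)=u$, this is exactly what it means for $U$ to be a strong solution. Uniqueness is then automatic: every strong solution is in particular a mild solution, and mild solutions are unique by Theorem \ref{lemmamsg}.

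The central tool is the Kato-type integral inequality which characterizes mild solutions of $0 \in U' + BU$ for m-accretive $B$: for every $(x, y) \in B$ and every $0 \leq s \leq t$,
$$
\|U(t)-x\| - \|U(s)-x\| \;\leq\; \int_s^t \bigl[-y,\, U(\tau)-x\bigr]_+ \, d\tau,
$$
where $[z, w]_+ := \lim_{h \downarrow 0} h^{-1}(\|w+hz\| - \|w\|)$ denotes the right semi-inner product on $X$. This inequality is standard in nonlinear semigroup theory and available in \cite{BenilanBook} alongside the existence result quoted as Theorem \ref{lemmamsg}.

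Fixing $(x, y) \in B$, I would then select $t_0 \in (0, \infty)$ which is simultaneously a differentiability point of $U$ and a Lebesgue point of the (bounded, measurable) scalar integrand above; these conditions exclude only a Lebesgue-null set. Applying the inequality on $[t_0, t_0+h]$, dividing by $h > 0$, and sending $h \downarrow 0$ converts the integral estimate into the pointwise bracket estimate
$$
\bigl[U'(t_0)+y,\, U(t_0)-x\bigr]_+ \;\leq\; 0 \qquad \text{for every } (x, y) \in B.
$$
This is exactly the condition that the pair $(U(t_0), -U'(t_0))$ is accretively related to every element of the graph of $B$; since $B$ is m-accretive and therefore maximal accretive, the pair must already lie in $B$, yielding $-U'(t_0) \in B\,U(t_0)$ as required.

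The main obstacle lies precisely in the passage from the integral inequality to the pointwise one. On the left side, differentiability of $U$ at $t_0$ together with the Lipschitz bound $\bigl|\|U(t_0+h)-x\| - \|U(t_0)+hU'(t_0)-x\|\bigr| \leq \|U(t_0+h) - U(t_0) - hU'(t_0)\| = o(h)$ reduces the forward difference quotient of $\|U(\cdot)-x\|$ to the bracket $[U'(t_0),\,U(t_0)-x]_+$. On the right side, Lebesgue differentiation is justified since the integrand is upper semicontinuous in $\tau$ (by upper semicontinuity of the bracket in its second argument and continuity of $U$) and bounded by $\|y\|$. The subadditivity of $[\,\cdot\,, w]_+$ in its first argument then combines the two sides into the desired inequality, and the reduction to m-accretivity is a textbook step.
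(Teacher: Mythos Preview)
Your argument is essentially the textbook proof of the result the paper invokes: the paper itself does not prove this theorem but simply defers the implication ``mild $+$ locally absolutely continuous $\Rightarrow$ strong'' to \cite[Prop.~7.9]{BenilanBook}, and handles uniqueness exactly as you do (strong $\Rightarrow$ mild, then Theorem~\ref{lemmamsg}). So in substance you are reproducing what the cited reference contains, which is more than the paper does.

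One technical point deserves care. From the Kato inequality and upper semicontinuity of the bracket you correctly obtain
\[
[U'(t_0),\,U(t_0)-x]_+ \;\le\; [-y,\,U(t_0)-x]_+
\]
at every differentiability point $t_0$ of $U$; your u.s.c.\ observation is important here because it makes the exceptional null set independent of $(x,y)\in B$. However, plain subadditivity of $[\cdot,w]_+$ then yields only
\[
[U'(t_0)+y,\,U(t_0)-x]_+ \;\le\; [-y,\,U(t_0)-x]_+ + [y,\,U(t_0)-x]_+,
\]
and the right-hand side is nonnegative in general, so this does not give the inequality you state. What maximal accretivity actually requires for $(U(t_0),-U'(t_0))\in B$ is $[-U'(t_0)-y,\,U(t_0)-x]_+ \ge 0$, i.e.\ $[U'(t_0)+y,\,U(t_0)-x]_- \le 0$, and this \emph{does} follow: use the mixed estimate $[z_1+z_2,w]_+ \ge [z_1,w]_- + [z_2,w]_+$ with $z_1=-U'(t_0)$, $z_2=-y$ together with $[-z,w]_- = -[z,w]_+$. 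With this small correction your outline is complete.
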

\begin{proof} Firstly, note that any strong solution of (\ref{eveqgeneral}) is also a mild solution. Consequently, Theorem \ref{lemmamsg} yields the uniqueness. Moreover, $U \in W^{1,1}_{\text{Loc}}((0,\infty);X)$ already implies that $U$ is a strong solution of (\ref{eveqgeneral}), cf. \cite[Prop. 7.9]{BenilanBook}. 
\end{proof}

\begin{remark}\label{remarksemicontracexpf} Let $\gamma \in L^{1}_{g_{1},g_{2}}(S)$. Then one has, by using the nonlinear semigroup theory, that $T(t)$ as well as $T(t,\gamma)$ are contraction semigroups fulfilling the exponential formula, i.e. one has
	\begin{enumerate}
		\item $T(t_{1}+t_{2})u=T(t_{1})T(t_{2})u$ and $T(t_{1}+t_{2},\gamma)v=T(t_{1},\gamma)T(t_{2},\gamma)v$, for all $t_{1},t_{2} \in [0,\infty)$,\linebreak $u \in L^{1}(\Omega;L^{1}(S))$ and $v \in L^{1}(S)$  (cf. \cite[Theorem 1.10]{BenilanBook}),
		\item $||T(t)u_{1}-T(t)u_{2}||_{L^{1}(\Omega;L^{1}(S))} \leq  ||u_{1}-u_{2}||_{L^{1}(\Omega;L^{1}(S))}$, for all $u_{1},u_{2} \in L^{1}(\Omega;L^{1}(S))$ and \linebreak$||T(t,\gamma)v_{1}-T(t,\gamma)v_{2}||_{L^{1}(S)} \leq  ||v_{1}-v_{2}||_{L^{1}(S)}$ for all $v_{1},v_{2} \in L^{1}(S)$ (cf. \cite[Theorem 3.10]{BenilanBook}),
		\item $\lim \limits_{m \rightarrow \infty} \left(Id+\frac{t}{m}\A\right)^{-m}u = T(t)u$ for all $u \in L^{1}(\Omega;L^{1}(S))$ and $\lim \limits_{m \rightarrow \infty} \left(Id+\frac{t}{m}\mathfrak{a}(\gamma)\right)^{-m}v= T(t,\gamma)v$ for all $v \in L^{1}(S)$, where the limits are understood in $L^{1}(\Omega;L^{1}(S))$ and in $L^{1}(S)$, resp. (cf. \cite[Theorem 4.2]{BenilanBook}).
	\end{enumerate}
\end{remark}

\begin{theorem}\label{deterministicidentityprop} Let $u \in L^{1}(\Omega;L^{1}(S))$ and $t \in [0,\infty)$. Then one has
\begin{align*}
P\left(\left\{ \omega \in \Omega:~(T(t)u)(\omega)=T(t,g(\omega))u(\omega)\right\}\right)=1.
\end{align*} 
\end{theorem}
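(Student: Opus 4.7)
The plan is to prove the identity through the Crandall-Liggett exponential formula recorded in Remark \ref{remarksemicontracexpf}.iii), by first establishing a pointwise (in $\omega$) identification of the resolvents of $\A$ and of $\mathfrak{a}(g(\omega))$, and then iterating and passing to the limit.

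First I would show the \emph{resolvent identity}: for every $\alpha\in(0,\infty)$ and every $h\in R(\operatorname{Id}+\alpha\A)$,
\[
\bigl((\operatorname{Id}+\alpha\A)^{-1}h\bigr)(\omega)=(\operatorname{Id}+\alpha\mathfrak{a}(g(\omega)))^{-1}h(\omega)\quad\text{for }\P\text{-a.e.\ }\omega\in\Omega.
\]
Indeed, if $f=(\operatorname{Id}+\alpha\A)^{-1}h$ and $\hat f\in\A f$ with $h=f+\alpha\hat f$, then Lemma \ref{lemmaAclosarel} yields $(f(\omega),\hat f(\omega))\in\mathfrak{a}(g(\omega))$ for $\P$-a.e.\ $\omega$, and evaluating $h=f+\alpha\hat f$ in $L^1(\Omega;L^1(S))$ pointwise gives $h(\omega)=f(\omega)+\alpha\hat f(\omega)$ in $L^1(S)$ for $\P$-a.e.\ $\omega$; since $\mathfrak{a}(g(\omega))$ is accretive (and its resolvent single-valued on its range), this forces the claimed equality. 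Note that $R(\operatorname{Id}+\alpha\A)=L^1(\Omega;L^1(S))$ by Theorem \ref{rctheorem}, so this is applicable to every element.

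Next I would iterate. Fix $t\in(0,\infty)$, $u\in L^1(\Omega;L^1(S))$ and $m\in\mathbb{N}$. Applying the resolvent identity $m$ times and discarding a finite union of $\P$-null sets at each step, one obtains
\[
\left(\!\left(\operatorname{Id}+\tfrac{t}{m}\A\right)^{\!-m}u\right)\!(\omega)=\left(\operatorname{Id}+\tfrac{t}{m}\mathfrak{a}(g(\omega))\right)^{\!-m}u(\omega)\quad\text{for }\P\text{-a.e. }\omega\in\Omega.
\]
By Remark \ref{remarksemicontracexpf}.iii) the left-hand side converges to $T(t)u$ in $L^1(\Omega;L^1(S))$ as $m\to\infty$, so there is a subsequence $(m_k)$ (depending on $t$ and $u$) along which the convergence holds $\P$-a.e.\ in $L^1(S)$. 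For every fixed $\omega$ the right-hand side converges along any subsequence to $T(t,g(\omega))u(\omega)$ in $L^1(S)$, again by Remark \ref{remarksemicontracexpf}.iii), applied to the deterministic semigroup associated with the weight $g(\omega)\in L^1_{g_1,g_2}(S)$. Intersecting the two full-measure sets, the limits must agree, which gives $(T(t)u)(\omega)=T(t,g(\omega))u(\omega)$ for $\P$-a.e.\ $\omega$. The case $t=0$ is trivial.

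The main obstacle I anticipate is the bookkeeping of the nested $\P$-null exceptional sets through the iteration and the subsequence extraction, and the verification that the pointwise resolvent identity is indeed legal for $\A$ (not just for $A$). The latter is exactly the content of Lemma \ref{lemmaAclosarel}, so once that is invoked cleanly the rest is essentially combining the two exponential formulas. A minor subtlety is that the subsequence extraction from $L^1(\Omega;L^1(S))$-convergence to $\P$-a.e.\ $L^1(S)$-convergence must be performed after the identity is in place (otherwise one would need to handle uncountably many exceptional sets), but this is standard.
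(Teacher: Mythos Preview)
Your proposal is correct and follows essentially the same route as the paper: establish the pointwise resolvent identity $((\operatorname{Id}+\alpha\A)^{-1}h)(\omega)=(\operatorname{Id}+\alpha\mathfrak{a}(g(\omega)))^{-1}h(\omega)$ via Lemma \ref{lemmaAclosarel}, iterate it $m$ times, and then pass to the limit along a subsequence using the exponential formula on both the random and the deterministic side. The paper's proof is organized exactly this way (it phrases the iteration as an induction and skips the inductive step), and your handling of the null sets and the subsequence extraction matches what is done there.
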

\begin{proof} Let $u \in L^{1}(\Omega;L^{1}(S))$ and $\tilde{t} \in (0,\infty)$.\\ 
Firstly, it will be proven inductively that
\begin{align}
\label{inductionhypo}
((Id+\tilde{t}\A)^{-m}u)(\omega)=(Id+\tilde{t}\mathfrak{a}(g(\omega)))^{-m}(u(\omega)),~\forall m \in \mathbb{N} \text{ and for $\P$-a.e. } \omega \in \Omega.
\end{align}
So let $m=1$ and introduce $f:=(Id+\tilde{t}\A)^{-1}u$. Consequently, there is an $\hat{f} \in \A f$ such that $f+\tilde{t}\hat{f}=u$.\\
As $(f,\hat{f}) \in \A$, one has $(f(\omega),\hat{f}(\omega)) \in \mathfrak{a}(g(\omega))$ for $\P$-a.e. $\omega \in \Omega$, see Lemma \ref{lemmaAclosarel}.\\
Since obviously $f(\omega)+\tilde{t}\hat{f}(\omega)=u(\omega)$ for $\P$-a.e. $\omega \in \Omega$ one obtains that $f(\omega)=(Id+\tilde{t}\mathfrak{a}(g(\omega)))^{-1}(u(\omega))$ for $\P$-a.e. $\omega \in \Omega$ and consequently
\begin{align*}
((Id+\tilde{t}\A)^{-1}u)(\omega)=f(\omega)=(Id+\tilde{t}\mathfrak{a}(g(\omega)))^{-1}(u(\omega)) \text{ for $\P$-a.e. } \omega \in \Omega,
\end{align*}
i.e. (\ref{inductionhypo}) is proven for $m=1$. The proof of the induction step works analogously and will be skipped.\\

Now let $t \in [0,\infty)$ be given and choose $\tilde{t}:=\frac{t}{m}$ in (\ref{inductionhypo}). Then one gets
\begin{align}
\label{deterministicrelproof1}
\left(\left(Id+\frac{t}{m}\A\right)^{-m}u\right)(\omega)=\left(Id+\frac{t}{m}\mathfrak{a}(g(\omega))\right)^{-m}u(\omega),~\forall m \in \mathbb{N} \text{ and $\P$-a.e. } \omega \in \Omega.
\end{align}
Moreover, the exponential formula, i.e. Remark \ref{remarksemicontracexpf}.iii), yields, by passing to a subsequence if necessary, that
\begin{align*} 
\lim \limits_{m \rightarrow \infty} \left(\left(Id+\frac{t}{m}\A\right)^{-m}u\right)(\omega) = (T(t)u)(\omega),~\text{for $\P$-a.e. } \omega \in \Omega \text{ in } L^{1}(S).
\end{align*}
Analogously, one also has by virtue of the exponential formula that
\begin{align*}
\lim \limits_{m \rightarrow \infty} \left(Id+\frac{t}{m}\mathfrak{a}(g(\omega))\right)^{-m}u(\omega)= T(t,g(\omega))u(\omega),~\text{for $\P$-a.e. }\omega \in \Omega \text{, in } L^{1}(S),
\end{align*}
which yields the claim.
\end{proof}

Even though one has $T(\cdot,\gamma)u \in W^{1,1}_{\text{Loc}}((0,\infty);L^{1}(S))$, cf. \cite[Theorem 3.7]{main} and in addition \linebreak$T(t,g(\omega))u(\omega)=(T(t)u)(\omega)$ for $\P$-a.e. $ \omega \in \Omega$, it is not trivial that $T(\cdot)u$ is also differentiable almost everywhere and locally Lipschitz continuous. Particularly one needs a more detailed version of some parts of the proof of \cite[Theorem 3.7]{main}. It is only stated in \cite{main} that $T(\cdot,\gamma)v$ is locally Lipschitz continuous and differentiable a.e. Particularly, the Lipschitz constant of $T(\cdot,\gamma)v$ is not given and moreover it is not explicitly proven that $T(\cdot,\gamma)u$ is right differentiable everywhere on $(0,\infty)$. The Lipschitz constant as well as the right differentiability of $T(\cdot,\gamma)u$ are needed to prove that $T(\cdot)u \in W^{1,1}_{\text{Loc}}((0,\infty);L^{1}(\Omega;L^{1}(S)))$.

\begin{lemma}\label{lemmadetdifflips} Let $v \in L^{1}(S)$ and $\gamma \in L^{1}_{g_{1},g_{2}}(S)$. The mapping $(0,\infty) \ni t \mapsto T(t,\gamma)v$ is right differentiable, i.e. there is $T^{\prime}_{r}(t,\gamma)v \in L^{1}(S)$ such that
\begin{align}
\label{lemmadetdifflipseq}
\lim \limits_{h \searrow 0} \frac{T(t+h,\gamma)v-T(t,\gamma)v}{h}=T^{\prime}_{r}(t,\gamma)v \text{, in } L^{1}(S),~\forall t \in (0,\infty).
\end{align}
In addition, let $\varepsilon > 0$. Then the mapping $[\varepsilon,\infty) \ni t \mapsto T(t,\gamma)v$ is Lipschitz continuous; more precisely one has
\begin{align*}
||T(t+h,\gamma)v-T(t,\gamma)v||_{L^{1}(S)} \leq h ||T^{\prime}_{r}(t,\gamma)v||_{L^{1}(S)} \leq h \frac{2}{|p-2|\varepsilon}||v||_{L^{1}(S)},~\forall t \geq \varepsilon,~h>0.
\end{align*} 
\end{lemma}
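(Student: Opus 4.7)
My approach is to exploit the fact that the operator $a(\gamma)$ is positively homogeneous of degree $p-1$: substituting $f \mapsto \lambda f$ (with $\lambda>0$) in the defining variational identity multiplies the gradient term by $\lambda^{p-1}$, so $(\lambda f, \lambda^{p-1}\hat{f})\in a(\gamma)$ whenever $(f,\hat{f})\in a(\gamma)$, and this property transfers to $\a(\gamma)$ by closure. Combined with uniqueness of mild solutions this yields the B\'enilan-Crandall scaling identity
\begin{equation*}
T(t,\gamma)(\lambda v)=\lambda\, T(\lambda^{p-2}t,\gamma)v,~~\forall \lambda>0,~ t\geq 0.
\end{equation*}
Since $p\neq 2$ the scaling exponent $p-2$ is nonzero, which places us in the setting where homogeneous m-accretive operators enjoy an additional regularization.

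The plan is to invoke the B\'enilan-Crandall regularization theorem (cf.\ \cite{BenilanBook}) for such homogeneous operators to obtain that $T(\cdot,\gamma)v$ is right-differentiable at every $t\in(0,\infty)$ and that the right-derivative satisfies the pointwise bound
\begin{equation*}
||T^{\prime}_{r}(t,\gamma)v||_{L^{1}(S)}\leq \frac{2}{|p-2|t}||v||_{L^{1}(S)}.
\end{equation*}
The underlying heuristic is to set $\phi(\lambda):=T(t,\gamma)(\lambda v)=\lambda T(\lambda^{p-2}t,\gamma)v$ and to observe that the contraction property gives $||\phi(\lambda)-\phi(1)||_{L^{1}(S)}\leq |\lambda-1|\,||v||_{L^{1}(S)}$; differentiating the two representations of $\phi$ at $\lambda=1$ provides the relation $||T(t,\gamma)v+(p-2)t\, T^{\prime}_{r}(t,\gamma)v||_{L^{1}(S)}\leq ||v||_{L^{1}(S)}$, whence the triangle inequality together with the contraction bound $||T(t,\gamma)v||_{L^{1}(S)}\leq ||v||_{L^{1}(S)}$ delivers the stated estimate. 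The rigorous execution of this heuristic---in particular producing a right-derivative at every $t>0$ rather than only almost every---is the main obstacle of the lemma, and is precisely what the homogeneous regularization theory supplies.

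Once the pointwise bound and the existence of the right-derivative are in hand, the Lipschitz estimate follows from two standard ingredients. First, contraction of $T(\cdot,\gamma)$ together with the semigroup property gives $||T(s+h,\gamma)v-T(s,\gamma)v||_{L^{1}(S)}\leq ||T(r+h,\gamma)v-T(r,\gamma)v||_{L^{1}(S)}$ whenever $r\leq s$; dividing by $h$ and letting $h\searrow 0$ yields that $s\mapsto ||T^{\prime}_{r}(s,\gamma)v||_{L^{1}(S)}$ is non-increasing. Second, $T(\cdot,\gamma)v\in W^{1,1}_{\text{Loc}}((0,\infty);L^{1}(S))$ by \cite[Theorem 3.7]{main}, so that the fundamental theorem of calculus is available and, combined with the above monotonicity, produces
\begin{equation*}
||T(t+h,\gamma)v-T(t,\gamma)v||_{L^{1}(S)}\leq \int_{t}^{t+h}||T^{\prime}_{r}(s,\gamma)v||_{L^{1}(S)}\,ds\leq h\,||T^{\prime}_{r}(t,\gamma)v||_{L^{1}(S)}.
\end{equation*}
Plugging in the pointwise bound and using $t\geq \varepsilon$ finishes the proof with Lipschitz constant $\tfrac{2}{|p-2|\varepsilon}||v||_{L^{1}(S)}$.
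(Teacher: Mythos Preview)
Your approach is essentially the same as the paper's: both rest on the positive homogeneity of degree $p-1$ of $a(\gamma)$ (and hence of $\a(\gamma)$) combined with a B\'enilan--Crandall regularization result to obtain that $T(t,\gamma)v\in D(\a(\gamma))$ for every $t>0$, the right-differentiability at every $t>0$, and the bound $\|T'_r(t,\gamma)v\|_{L^1(S)}\le \tfrac{2}{|p-2|t}\|v\|_{L^1(S)}$. The paper simply cites \cite[Theorems~4.2 and 4.4]{cao} (the completely accretive framework) rather than \cite{BenilanBook}, and identifies $T'_r(t,\gamma)v=-\a(\gamma)^{\circ}T(t,\gamma)v$ explicitly via the minimal section.

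The only genuine difference is in deriving the first Lipschitz inequality $\|T(t+h,\gamma)v-T(t,\gamma)v\|_{L^1(S)}\le h\,\|T'_r(t,\gamma)v\|_{L^1(S)}$. You obtain it from the fundamental theorem of calculus (via $T(\cdot,\gamma)v\in W^{1,1}_{\text{Loc}}$ from \cite[Theorem~3.7]{main}) together with the monotonicity of $s\mapsto\|T'_r(s,\gamma)v\|_{L^1(S)}$. The paper instead gets it in one stroke from \cite[Theorem~4.2]{cao}, which for any $w\in D(\a(\gamma))$ gives $\|T(h,\gamma)w-w\|_{L^1(S)}\le h\,\|\a(\gamma)^{\circ}w\|_{L^1(S)}$; applying this with $w=T(t,\gamma)v$ avoids the detour through the fundamental theorem. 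Your route is a bit more hands-on but equally valid.
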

\begin{proof} Let $\gamma \in L^{1}_{g_{1},g_{2}}(S)$ and $v \in L^{1}(S)$.\\
Throughout this proof let $\mathfrak{a}(\gamma)^{\circ}: D(\mathfrak{a}(\gamma)) \rightarrow L^{1}(S)$ be such that $\mathfrak{a}(\gamma)^{\circ}w$ is, for any $w \in D(\mathfrak{a}(\gamma))$, the uniquely determined element fulfilling
\begin{align*}
\mathfrak{a}(\gamma)^{\circ}w<< \hat{w},~\forall \hat{w} \in \mathfrak{a}(\gamma)w.
\end{align*}
As $\mathfrak{a}(\gamma)$ is m-accretive and completely accretive, such an element does indeed exist and is unique, cf. \cite[Proposition 3.7]{cao}.\\

As $\mathfrak{a}(\gamma)$ is m-accretive and completely accretive, \cite[Theorem 4.2]{cao} yields that
\begin{align*}
\lim \limits_{h \searrow 0 } \frac{T(h,\gamma)w-w}{h} = - \mathfrak{a}(\gamma)^{\circ}w,~\text{in } L^{1}(S),~\forall w \in D(\mathfrak{a}(\gamma)).
\end{align*}
Moreover, as $\mathfrak{a}(\gamma)$ is also positively homogeneous of degree $p-1$ and since $\overline{D(\mathfrak{a}(\gamma))}=L^{1}(S)$, it follows by virtue of \cite[Theorem 4.4]{cao}  that $T(t,\gamma)v \in D(\mathfrak{a}(\gamma))$ for every $t \in (0,\infty)$.\\
Now introduce $T^{\prime}_{r}(t,\gamma)v:= -\mathfrak{a}(\gamma)^{\circ}T(t,\gamma)v$ for any $t \in (0,\infty)$. Then one has for any $t \in (0,\infty)$ that
\begin{align*}
\lim \limits_{h \searrow 0} \frac{T(t+h,\gamma)v-T(t,\gamma)v}{h} = \lim \limits_{h \searrow 0} \frac{T(h,\gamma)T(t,\gamma)v-T(t,\gamma)v}{h} = -\mathfrak{a}(\gamma)^{\circ}T(t,\gamma)v = T^{\prime}_{r}(t,\gamma)v,~\text{in } L^{1}(S)
\end{align*} 
which yields the first assertion.\\
Now let $\varepsilon>0$, $t \in [\varepsilon,\infty)$ and $h \in (0,\infty)$. As $T(t,\gamma)v \in D(\mathfrak{a}(\gamma))$, one infers by \cite[Theorem 4.2]{cao} and \cite[Theorem 4.4]{cao} that
\begin{align*}
\left|\left| \frac{T(t+h,\gamma)v-T(t,\gamma)v}{h} \right|\right|_{L^{1}(S)} \leq || T^{\prime}_{r}(t,\gamma)v||_{L^{1}(S)} \leq  \frac{2}{|p-2|t}||v||_{L^{1}(S)} \leq \frac{2}{|p-2|\varepsilon}||v||_{L^{1}(S)}
\end{align*}
which yields the claim.
\end{proof}

\begin{remark} Throughout everything which follows $T^{\prime}_{r}(t,\gamma)v$ denotes, for any $\gamma \in L^{1}_{g_{1},g_{2}}(S)$, $t>0$ and $v \in L^{1}(S)$, the element introduced in (\ref{lemmadetdifflipseq}).\\
Moreover, introduce $T^{\prime}_{r}(t)u: \Omega \rightarrow L^{1}(S)$, for any $u \in L^{1}(\Omega;L^{1}(S))$ and $t>0$, by 
\begin{align*}
(T^{\prime}_{r}(t)u)(\omega):=T^{\prime}_{r}(t,g(\omega))u(\omega).
\end{align*}
\end{remark}

\begin{lemma}\label{lemmalcrd} Let $u \in L^{1}(\Omega;L^{1}(S))$. The mapping $(0,\infty) \ni t \mapsto T(t)u$ is right differentiable; more precisely, one has
	\begin{align*}
	\lim \limits_{h \searrow 0} \frac{T(t+h)u-T(t)u}{h}=T^{\prime}_{r}(t)u \text{, in } L^{1}(\Omega;L^{1}(S)),~\forall t \in (0,\infty).
	\end{align*}
In addition, let $\varepsilon > 0$. Then the mapping $[\varepsilon,\infty) \ni t \mapsto T(t)u$ is Lipschitz continuous, i.e.
\begin{align}
\label{randlipseq}
||T(t+h)u-T(t)u||_{L^{1}(\Omega;L^{1}(S))} \leq h ||T^{\prime}_{r}(t)u||_{L^{1}(\Omega;L^{1}(S))} \leq  h \frac{2}{|p-2|\varepsilon}||u||_{L^{1}(\Omega;L^{1}(S))}
\end{align}
for all $t \geq \varepsilon$ and $h>0$.
\end{lemma}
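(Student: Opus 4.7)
The plan is to transfer the right differentiability and Lipschitz estimate pointwise in $\omega$ from Lemma \ref{lemmadetdifflips} using the fundamental identification $(T(t)u)(\omega)=T(t,g(\omega))u(\omega)$ from Theorem \ref{deterministicidentityprop}, and then to pass from these $L^{1}(S)$-statements in $\omega$ to $L^{1}(\Omega;L^{1}(S))$-statements by scalar dominated convergence applied to the $L^{1}(S)$-norms of the difference quotients.

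First, for fixed $u \in L^{1}(\Omega;L^{1}(S))$, $t \in (0,\infty)$ and $h>0$, applying Theorem \ref{deterministicidentityprop} at the two times $t$ and $t+h$ and intersecting the associated $\P$-nullsets yields
\begin{align*}
(T(t+h)u)(\omega)=T(t+h,g(\omega))u(\omega),\quad (T(t)u)(\omega)=T(t,g(\omega))u(\omega)
\end{align*}
for $\P$-a.e.\ $\omega \in \Omega$. Since $g(\omega) \in L^{1}_{g_{1},g_{2}}(S)$ and $u(\omega) \in L^{1}(S)$ with probability one, Lemma \ref{lemmadetdifflips} applies pointwise in $\omega$. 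For the Lipschitz estimate, fix $\varepsilon > 0$ and $t \geq \varepsilon$. The pointwise deterministic bound
\begin{align*}
||T(t+h,g(\omega))u(\omega)-T(t,g(\omega))u(\omega)||_{L^{1}(S)} \leq h\,||T^{\prime}_{r}(t,g(\omega))u(\omega)||_{L^{1}(S)} \leq h\,\frac{2}{|p-2|\varepsilon}\,||u(\omega)||_{L^{1}(S)}
\end{align*}
holds $\P$-a.s., and integrating over $\Omega$ gives (\ref{randlipseq}), once $T^{\prime}_{r}(t)u$ is shown to be $\mathcal{F}$-$\mathfrak{B}(L^{1}(S))$-measurable and an element of $L^{1}(\Omega;L^{1}(S))$. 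The measurability follows from observing that $\omega \mapsto T^{\prime}_{r}(t,g(\omega))u(\omega)$ is the $\P$-a.s.\ $L^{1}(S)$-limit, along a countable sequence $h_{m} \searrow 0$, of the $\mathcal{F}$-$\mathfrak{B}(L^{1}(S))$-measurable difference quotients $\omega \mapsto \frac{(T(t+h_{m})u)(\omega)-(T(t)u)(\omega)}{h_{m}}$, combined with the separability of $L^{1}(S)$ recalled at the beginning of the paper. Integrability is then immediate from the pointwise bound $||T^{\prime}_{r}(t,g(\omega))u(\omega)||_{L^{1}(S)} \leq \frac{2}{|p-2|t}||u(\omega)||_{L^{1}(S)}$.

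For the right differentiability at a given $t \in (0,\infty)$, choose any $\varepsilon \in (0,t)$. By Lemma \ref{lemmadetdifflips} one has
\begin{align*}
\lim \limits_{h \searrow 0} \left|\left|\frac{T(t+h,g(\omega))u(\omega)-T(t,g(\omega))u(\omega)}{h}-T^{\prime}_{r}(t,g(\omega))u(\omega)\right|\right|_{L^{1}(S)}=0
\end{align*}
for $\P$-a.e.\ $\omega \in \Omega$, while the same bound used above shows that, uniformly in $h > 0$, the expression under the limit is $\P$-a.s.\ dominated by the $\P$-integrable function $\omega \mapsto \frac{4}{|p-2|\varepsilon}||u(\omega)||_{L^{1}(S)}$. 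Scalar dominated convergence then yields
\begin{align*}
\lim \limits_{h \searrow 0} \left|\left|\frac{T(t+h)u-T(t)u}{h}-T^{\prime}_{r}(t)u\right|\right|_{L^{1}(\Omega;L^{1}(S))}=0,
\end{align*}
which is the desired right differentiability.

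The only mildly delicate point in the whole argument is the verification that $T^{\prime}_{r}(t)u$ is $\mathcal{F}$-$\mathfrak{B}(L^{1}(S))$-measurable and lies in $L^{1}(\Omega;L^{1}(S))$; everything else is a direct transfer of Lemma \ref{lemmadetdifflips} via Theorem \ref{deterministicidentityprop}, followed by dominated convergence.
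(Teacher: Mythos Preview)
Your proof is correct and follows essentially the same approach as the paper: both identify $(T(t)u)(\omega)$ with $T(t,g(\omega))u(\omega)$ via Theorem \ref{deterministicidentityprop}, invoke Lemma \ref{lemmadetdifflips} pointwise in $\omega$, obtain measurability of $T^{\prime}_{r}(t)u$ as the $\P$-a.s.\ $L^{1}(S)$-limit of measurable difference quotients along a countable sequence, and then pass to $L^{1}(\Omega;L^{1}(S))$ by dominated convergence using the integrable majorant $\omega \mapsto \frac{4}{|p-2|t}\|u(\omega)\|_{L^{1}(S)}$. The only cosmetic difference is that you establish the Lipschitz bound before the right differentiability, whereas the paper does them in the opposite order.
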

\begin{proof} Let $u \in L^{1}(\Omega;L^{1}(S))$, $t \in (0,\infty)$ and $(h_{m})_{m \in \mathbb{N}} \subseteq (0,\infty)$ be a sequence fulfilling $\lim \limits_{m \rightarrow \infty}h_{m}=0$.\\
One has, by virtue of Theorem \ref{deterministicidentityprop} and Lemma \ref{lemmadetdifflips} that
\begin{align*}
\lim \limits_{m \rightarrow \infty} \left|\left| (T^{\prime}_{r}(t)u)(\omega)- \frac{(T(t+h_{m})u)(\omega)-(T(t)u)(\omega)}{h_{m}}\right|\right|_{L^{1}(S)}=0,~\text{for $\P$-a.e. } \omega \in \Omega.
\end{align*}
Consequently, $T^{\prime}_{r}(t)u$ is $\F-\mathfrak{B}(L^{1}(S))$-measurable because it is a.s. the $L^{1}(S)$-limit of $\F$-$\mathfrak{B}(L^{1}(S))$-measurable functions.\\
Moreover, one has by applying Theorem \ref{deterministicidentityprop} and Lemma \ref{lemmadetdifflips} again that
\begin{align*}
 \left|\left| (T^{\prime}_{r}(t)u)(\omega)- \frac{(T(t+h_{m})u)(\omega)-(T(t)u)(\omega)}{h_{m}}\right|\right|_{L^{1}(S)} \leq 2 ||T^{\prime}_{r}(t,g(\omega))u(\omega)||_{L^{1}(S)} \leq \frac{4}{|p-2|t}||u(\omega)||_{L^{1}(S)},
\end{align*}
for $\P$-a.e. $\omega \in \Omega$ and all $m \in \mathbb{N}$. Consequently, as $u \in L^{1}(\Omega;L^{1}(S))$ one infers from Lebesgue's theorem that
\begin{align*}
\lim \limits_{m \rightarrow \infty}  \frac{T(t+h_{m})u-T(t)u}{h_{m}} = T^{\prime}_{r}(t)u,~\text{in } L^{1}(\Omega;L^{1}(S)).
\end{align*} 
Now let $\varepsilon>0$, $h>0$ and assume $t\geq \varepsilon$. Then one has by Lemma \ref{lemmadetdifflips} and Theorem \ref{deterministicidentityprop} that
\begin{align*}
||T(t+h)u)(\omega)-(T(t)u)(\omega)||_{L^{1}(S)}  \leq h ||(T^{\prime}_{r}(t)u)(\omega)||_{L^{1}(S)} \leq h \frac{2}{|p-2|\varepsilon}||u(\omega)||_{L^{1}(S)},~\text{ for $\P$-a.e. } \omega \in \Omega,
\end{align*}
which obviously implies (\ref{randlipseq}).
\end{proof}

\begin{remark} Let $(X,||\cdot||)$ denote a separable Banach space and let $f: \Omega \rightarrow X$ be any mapping. So far it has been used that the notions of $f$ being $\F-\mathfrak{B}(X)-$measurable, strongly measurable and weakly measurable are equivalent,  cf. \cite[Theorem 5.8 and 5.9]{meas}.\\
Now it will be necessary to consider measurability on the not necessarily separable Banach space\linebreak $L^{1}(\Omega;L^{1}(S))$.
Consequently, if $f: [a,b] \rightarrow L^{1}(\Omega;L^{1}(S))$, one needs to choose one of the measurability concepts.\\ 
Hereby, the common convention of choosing strong measurability as the appropriate notion of measurability will be followed. Moreover, introduce
\begin{align*}
L^{1}([a,b];L^{1}(\Omega;L^{1}(S))):=\{f:[a,b]\rightarrow L^{1}(\Omega;L^{1}(S))|~f \text{ is strongly meas., } \int \limits_{[a,b]} ||f(t)||_{L^{1}(\Omega;L^{1}(S))}dt<\infty\},
\end{align*}
for any compact interval $[a,b] \subseteq [0,\infty)$.\\
For an introduction to strong measurability, resp. integrability of functions $f:[a,b] \rightarrow X$, for not necessarily separable Banach spaces $(X,||\cdot||)$, see \cite{greenbook}. Particularly, all results concerning integrability, resp. differentiability, of vector-valued functions needed in the following proof, can be found in \cite[Section 1.1 and 1.2]{greenbook}.
\end{remark}

\begin{theorem}\label{theoremmainresultex} Let $u \in L^{1}(\Omega;L^{1}(S))$. Then one has $T(\cdot)u \in W^{1,1}_{\text{Loc}}((0,\infty);L^{1}(\Omega;L^{1}(S)))$ and therefore
\begin{align*}
0 \in T^{\prime}(t)u+\A T(t)u,~ \text{for a.e. } t \in (0,\infty),~ T(0)u=u,
\end{align*}
i.e. $T(\cdot)u$ is not only the mild, but also the uniquely determined strong solution of (\ref{eveqintro2}). 
\end{theorem}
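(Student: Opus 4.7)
The target is to apply Theorem~\ref{lemmassg} to the operator $\A$. Since $\A$ is closed by construction, m-accretive by Theorem~\ref{rctheorem}.ii), has dense domain by Lemma~\ref{lemmadomaindense}, and $T(\cdot)u \in C([0,\infty);L^{1}(\Omega;L^{1}(S)))$ is the mild solution of (\ref{eveqintro2}), it suffices to establish that $T(\cdot)u \in W^{1,1}_{\text{Loc}}((0,\infty);L^{1}(\Omega;L^{1}(S)))$, i.e.\ that $T(\cdot)u$ is locally absolutely continuous on $(0,\infty)$ and differentiable almost everywhere.

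Local absolute continuity comes directly from Lemma~\ref{lemmalcrd}: on $[\varepsilon,\infty)$ the mapping $t \mapsto T(t)u$ is Lipschitz, and it is right differentiable everywhere on $(0,\infty)$ with right derivative $T^{\prime}_{r}(t)u$ satisfying $\|T^{\prime}_{r}(t)u\|_{L^{1}(\Omega;L^{1}(S))} \leq \tfrac{2}{|p-2|t}\|u\|_{L^{1}(\Omega;L^{1}(S))}$. My first substep would be to verify that $t \mapsto T^{\prime}_{r}(t)u$ is strongly measurable and Bochner integrable on every compact subinterval of $(0,\infty)$. For each $m\in\mathbb{N}$ the difference quotient $t \mapsto m(T(t+1/m)u - T(t)u)$ is continuous, hence strongly measurable with separable range on compacts, and converges pointwise on $(0,\infty)$ to $T^{\prime}_{r}(t)u$ by Lemma~\ref{lemmalcrd}. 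By Pettis' theorem the pointwise limit inherits strong measurability, and the norm bound above then yields Bochner integrability on every $[\varepsilon,M] \subset (0,\infty)$.

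The crucial step is to establish, for all $0<\varepsilon<M$, the Bochner-integral identity
\begin{align*}
T(M)u - T(\varepsilon)u = \int_{\varepsilon}^{M} T^{\prime}_{r}(t)u\, dt\quad \text{in } L^{1}(\Omega;L^{1}(S)).
\end{align*}
I would identify $L^{1}(\Omega;L^{1}(S)) \cong L^{1}(\Omega\times S)$ via Fubini, so that for $\P$-a.e.\ $\omega$ the right-hand side acts as the $L^{1}(S)$-Bochner integral $\int_{\varepsilon}^{M} T^{\prime}_{r}(t,g(\omega))u(\omega)\, dt$. The deterministic result \cite[Theorem~3.7]{main}, combined with the construction carried out inside the proof of Lemma~\ref{lemmadetdifflips}, yields $T(\cdot,g(\omega))u(\omega) \in W^{1,1}_{\text{Loc}}((0,\infty);L^{1}(S))$ with almost-everywhere derivative $T^{\prime}_{r}(\cdot,g(\omega))u(\omega)$, so the integral identity holds pointwise in $\omega$ with the endpoint values $T(\varepsilon,g(\omega))u(\omega)$ and $T(M,g(\omega))u(\omega)$. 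Applying Theorem~\ref{deterministicidentityprop} at the two fixed times $t=\varepsilon$ and $t=M$ replaces these, outside a countable union of $\P$-nullsets, by $(T(\varepsilon)u)(\omega)$ and $(T(M)u)(\omega)$, and the resulting $\P$-a.s.\ pointwise $L^{1}(S)$-equality lifts, through the $L^{1}(\Omega\times S)$ identification, to the asserted Bochner equality. Once this integral representation is secured, Lebesgue's differentiation theorem for Bochner integrals gives differentiability a.e.\ of $T(\cdot)u$ at the Lebesgue points of $T^{\prime}_{r}(\cdot)u$, completing the verification of membership in $W^{1,1}_{\text{Loc}}$; Theorem~\ref{lemmassg} then finishes the proof.

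The main obstacle is precisely this Fubini-type lifting. Because $L^{1}(\Omega;L^{1}(S))$ is generally non-separable and, being an $L^{1}$-space, lacks the Radon--Nikodym property, one cannot extract a.e.\ differentiability of $T(\cdot)u$ directly from Lipschitz continuity; one has to route everything through the pointwise-in-$\omega$ deterministic theory and carefully glue together the $\P$-nullsets arising at the two endpoints $\varepsilon,M$ with those coming from the sequential measurability argument.
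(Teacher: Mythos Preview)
Your argument is correct, but it proves the integral identity by a genuinely different mechanism than the paper does.

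Both proofs reduce, via Theorem~\ref{lemmassg} and Lemma~\ref{lemmalcrd}, to showing that the Lipschitz map $\zeta(t)=T(t)u$ coincides with $\zeta_{*}(t):=\zeta(\varepsilon)+\int_{\varepsilon}^{t}T'_{r}(s)u\,ds$ on each compact subinterval of $(0,\infty)$, after which Lebesgue differentiation for Bochner integrals delivers a.e.\ differentiability. The divergence is in how $\zeta=\zeta_{*}$ is established. The paper never leaves the abstract $L^{1}(\Omega;L^{1}(S))$ framework: it considers the scalar function $\Gamma(t):=\|\zeta(t)-\zeta_{*}(t)\|_{L^{1}(\Omega;L^{1}(S))}$, shows it is Lipschitz with $\Gamma(\varepsilon)=0$ and right derivative zero almost everywhere, and concludes $\Gamma\equiv 0$ using that $\mathbb{R}$ has the Radon--Nikodym property. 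This neatly sidesteps the failure of RNP in $L^{1}(\Omega;L^{1}(S))$ by projecting to the real line instead of unpacking the $\omega$-fibres.

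Your route instead pulls the integral identity back to the deterministic one $\omega$-by-$\omega$ via \cite[Theorem~3.7]{main} and Lemma~\ref{lemmadetdifflips}, then lifts through $L^{1}(\Omega;L^{1}(S))\cong L^{1}(\Omega\times S)$. This works: once $t\mapsto T'_{r}(t)u$ is strongly measurable (which you obtain as a pointwise limit of continuous difference quotients), there is a jointly $(t,\omega,x)$-measurable representative, and standard scalar Fubini identifies the $\omega$-slice of the Bochner integral with $\int_{\varepsilon}^{M}T'_{r}(t,g(\omega))u(\omega)\,dt$ for $\P$-a.e.\ $\omega$; combining with Theorem~\ref{deterministicidentityprop} at the two endpoints then yields $\zeta=\zeta_{*}$. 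The trade-off is that your approach makes the dependence on the deterministic theory explicit and is conceptually transparent, while the paper's $\Gamma$-trick is self-contained and spares you the Fubini/joint-measurability bookkeeping that you correctly flag as the main obstacle.
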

\begin{proof}  Recalling Theorem \ref{lemmassg} yields that it suffices to prove $T(\cdot)u \in W^{1,1}_{\text{Loc}}((0,\infty);L^{1}(\Omega;L^{1}(S)))$. Moreover, by Lemma \ref{lemmalcrd} one has that this holds if $(0,\infty) \ni t \mapsto T(t)u$ is differentiable almost everywhere. In addition, it is clear that this follows if $(\frac{1}{m},m) \ni t \mapsto T(t)u$ is differentiable almost everywhere for every $m \in \mathbb{N}$.\\
Let $m \in \mathbb{N}$ be arbitrary but fixed. Moreover, let $\zeta :[\frac{1}{m},m]\rightarrow L^{1}(\Omega;L^{1}(S))$ be defined by $\zeta(t):=T(t)u$ for all $t \in [\frac{1}{m},m]$. Then $\zeta$ is, as it is continuous, a fortiori strongly measurable. Moreover, the continuity also yields that
\begin{align*}
\int \limits_{[\frac{1}{m},m]} ||\zeta(t)||_{L^{1}(\Omega;L^{1}(S))} dt < \infty
\end{align*}
and therefore $\zeta \in L^{1}([\frac{1}{m},m];L^{1}(\Omega;L^{1}(S)))$.\\
Now introduce $\zeta^{\prime}_{r}:[\frac{1}{m},m]\rightarrow L^{1}(\Omega;L^{1}(S))$, by $\zeta^{\prime}_{r}(t):=T^{\prime}_{r}(t)u$. Then one has
\begin{align*}
\lim \limits_{h \searrow 0 } \left|\left|\frac{\zeta(t+h)-\zeta(t)}{h}-\zeta^{\prime}_{r}(t)\right|\right|_{L^{1}(\Omega;L^{1}(S))}=0,
\end{align*}
for all $t \in [\frac{1}{m},m)$. Moreover, it follows by virtue of Lemma \ref{lemmalcrd} that
\begin{align*}
\left|\left|\frac{\zeta(t+h)-\zeta(t)}{h}-\zeta^{\prime}_{r}(t)\right|\right|_{L^{1}(\Omega;L^{1}(S))} \leq \frac{4m}{|p-2|}||u||_{L^{1}(\Omega;L^{1}(S))},~\forall t \in [\frac{1}{m},m],~h>0.
\end{align*}
Consequently, dominated convergence yields $\lim \limits_{h \searrow 0} \frac{\zeta(\cdot+h)-\zeta(\cdot)}{h} = \zeta^{\prime}_{r}(\cdot)$ in $L^{1}([\frac{1}{m},m];L^{1}(\Omega;L^{1}(S)))$ and therefore particularly that $\zeta^{\prime}_{r}(\cdot) \in L^{1}([\frac{1}{m},m];L^{1}(\Omega;L^{1}(S)))$.\\
Now introduce $\zeta_{\ast}:[\frac{1}{m},m]\rightarrow L^{1}(\Omega;L^{1}(S))$, by
\begin{align*}
\zeta_{\ast}(t):= \int \limits_{[\frac{1}{m},t]} \zeta^{\prime}_{r}(z)dz+\zeta\left(\frac{1}{m}\right),~\forall t \in [\frac{1}{m},m].
\end{align*}
Then the fundamental theorem of calculus (for Bochner integrals) yields that $\zeta_{\ast}$ is differentiable almost everywhere and that $\zeta_{\ast}^{\prime}(t)=\zeta^{\prime}_{r}(t)$ for a.e. $t \in [\frac{1}{m},m]$.\\
Consequently, the claim follows if $\zeta(t)=\zeta_{\ast}(t)$ for every $t \in [\frac{1}{m},m]$.\\
To prove this, introduce $\Gamma:[\frac{1}{m},m] \rightarrow \mathbb{R}$ by
\begin{align*}
\Gamma(t):= ||\zeta(t)-\zeta_{\ast}(t)||_{L^{1}(\Omega;L^{1}(S))},~\forall t \in [\frac{1}{m},m].
\end{align*}
Firstly, note that obviously $\Gamma(\frac{1}{m})=0$. Moreover, one has
\begin{eqnarray*}
& & ~
	\lim \limits_{h \searrow  0}   \left|\frac{\Gamma(t+h)-\Gamma(t)}{h}\right| \\
& \leq  & ~ \lim \limits_{h \searrow  0} \left(\left| \left| \frac{\zeta(t+h)-\zeta(t)}{h}+\zeta^{\prime}_{r}(t) \right|\right|_{L^{1}(\Omega;L^{1}(S))} + \left| \left| \frac{-\zeta_{\ast}(t+h)+\zeta_{\ast}(t)}{h}-\zeta^{\prime}_{r}(t) \right|\right|_{L^{1}(\Omega;L^{1}(S))}\right) \\ 
& = & ~ 0,
\end{eqnarray*}
for almost every $t \in [\frac{1}{m},m)$, i.e.  $\Gamma$ is almost everywhere right differentiable and the right derivative is equal to zero.\\
In addition, one has by invoking Lemma \ref{lemmalcrd} that
\begin{align*}
|\Gamma(t+h)-\Gamma(t)| \leq h \frac{2m}{|p-2|}||u||_{L^{1}(\Omega;L^{1}(S))}+\int \limits_{[t,t+h]} ||\zeta^{\prime}_{r}(z)||_{L^{1}(\Omega;L^{1}(S))}dz \leq h \frac{4m}{|p-2|}||u||_{L^{1}(\Omega;L^{1}(S))}
\end{align*}
for all $t \in [\frac{1}{m},m]$ and $0 < h \leq m-t$.\\
Conclusively, the last estimate yields that $\Gamma$ is Lipschitz continuous, which implies, as $\mathbb{R}$ has the Radon-Nikodym property, that it is differentiable almost everywhere. Since the right derivate of $\Gamma$ is zero almost everywhere, the almost everywhere derivative is also zero a.e. Finally, the Lipschitz continuity of $\Gamma$ yields that $\Gamma$ is constant, and hence $\Gamma(t)=0$ for all $t \in [\frac{1}{m},m]$.
\end{proof}

\begin{remark}\label{remarkccsg} Let $\gamma \in L^{1}_{g_{1},g_{2}}(S)$. Then $T(\cdot,\gamma)$ is not only a contraction semigroup, but even a complete contraction semigroup, i.e. one has
\begin{align*}
T(t,\gamma)v_{1}-T(t,\gamma)v_{2} << v_{1}-v_{2}
\end{align*}
for every $v_{1},v_{2} \in L^{1}(S)$ and $t \geq 0$, cf. \cite[Proposition 4.1]{cao}.\\
This also yields that $T(t,\gamma)v<<v$ for all $t \geq 0$ and $v \in L^{1}(S)$, because one immediately verifies that $T(t,\gamma)0=0$. (Due to the non-linearity, this is actually not true for arbitrary semigroups.) 
\end{remark}

\begin{proposition}\label{strongsolclosprop} Let $u,u_{1},u_{2} \in L^{1}(\Omega;L^{1}(S))$, $q \in [1,\infty]$ and $t \in [0,\infty)$. Moreover, assume that $u,u_{1},u_{2} \in L^{q}(S)$ a.s. Then the following assertions hold.
\begin{enumerate}
\item $\P \left(\big\{\omega \in \Omega:~||(T(t)u_{1}) (\omega)-(T(t)u_{2}) (\omega)||_{L^{q}(S)}\leq ||u_{1}(\omega)-u_{2}(\omega)||_{L^{q}(S)} \big\} \right) =1$,
\item $\P \left(\big\{\omega \in \Omega:~||(T(t)u) (\omega)||_{L^{q}(S)}\leq ||u(\omega)||_{L^{q}(S)} \big\} \right) =1$.
\end{enumerate} 
\end{proposition}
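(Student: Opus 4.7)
The approach is to reduce both assertions to the deterministic counterpart by invoking Theorem \ref{deterministicidentityprop}, and then to use the complete contractivity of the deterministic semigroup stated in Remark \ref{remarkccsg} together with the ordering property recalled in Remark \ref{remarkca}. The plan is thus entirely a ``lift-from-deterministic'' argument, and I do not anticipate any serious obstacle beyond keeping track of null sets.

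To prove (i), I would first fix $t \in [0,\infty)$ and apply Theorem \ref{deterministicidentityprop} to $u_1$ and to $u_2$. This produces two events of full measure on which $(T(t)u_i)(\omega)=T(t,g(\omega))u_i(\omega)$ for $i=1,2$. Intersecting these two events with the almost sure events $\{u_i(\omega)\in L^q(S)\}$ (which, by hypothesis, also have full measure) yields a single event $\Omega_0$ with $\P(\Omega_0)=1$ on which all four properties hold simultaneously. For every $\omega \in \Omega_0$ we have $g(\omega) \in L^{1}_{g_1,g_2}(S)$, so Remark \ref{remarkccsg} applies to the deterministic semigroup $T(\cdot,g(\omega))$ and gives
\begin{equation*}
T(t,g(\omega))u_1(\omega)-T(t,g(\omega))u_2(\omega)<<u_1(\omega)-u_2(\omega).
\end{equation*}
Since $u_1(\omega)-u_2(\omega)\in L^q(S)$ for $\omega\in\Omega_0$, Remark \ref{remarkca} converts this relation into the $L^q$-norm inequality
\begin{equation*}
\|T(t,g(\omega))u_1(\omega)-T(t,g(\omega))u_2(\omega)\|_{L^q(S)} \leq \|u_1(\omega)-u_2(\omega)\|_{L^q(S)}.
\end{equation*}
Substituting the identification from Theorem \ref{deterministicidentityprop} on the left-hand side produces exactly the event in (i), which is thus seen to contain $\Omega_0$ and hence has probability one.

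Assertion (ii) is then obtained by exactly the same reasoning applied to the second inequality of Remark \ref{remarkccsg}, namely $T(t,\gamma)v<<v$: apply Theorem \ref{deterministicidentityprop} to $u$, intersect with $\{u(\omega)\in L^q(S)\}$ to obtain a full-measure set $\Omega_0$, and on $\Omega_0$ conclude $T(t,g(\omega))u(\omega)<<u(\omega)$, whence $\|T(t,g(\omega))u(\omega)\|_{L^q(S)}\leq\|u(\omega)\|_{L^q(S)}$ by Remark \ref{remarkca}. Alternatively, (ii) follows directly from (i) by taking $u_1:=u$ and $u_2:=0$ and recalling that $T(t)0=0$ (since the zero function is trivially a strong solution of (\ref{eveqintro2}) with initial value $0$, and strong solutions are unique by Theorem \ref{theoremmainresultex}). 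Either route works and involves no computation beyond the null-set bookkeeping.
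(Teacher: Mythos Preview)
Your proposal is correct and follows exactly the route indicated in the paper: the paper's proof reads ``Follows trivially from Remark \ref{remarkccsg}, Theorem \ref{deterministicidentityprop} and Remark \ref{remarkca},'' which is precisely the combination you spell out. Your null-set bookkeeping and the alternative derivation of (ii) from (i) are fine elaborations of this one-line proof.
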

\begin{proof} Follows trivially from Remark \ref{remarkccsg}, Theorem \ref{deterministicidentityprop} and Remark \ref{remarkca}.
\end{proof}

\begin{theorem}\label{mainresult} Let $u \in L^{1,\infty}(\Omega;L^{1}(S))$. Then one has
\begin{align}
\label{mainresulteq}
-T^{\prime}(t)u=AT(t)u,~\text{for a.e. } t \in (0,\infty).
\end{align}
\end{theorem}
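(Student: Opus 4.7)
The plan is to combine the strong-solution property from Theorem \ref{theoremmainresultex}, the $L^{\infty}$-contraction property from Proposition \ref{strongsolclosprop}, and the ``promotion'' lemma Proposition \ref{propclosaarelation} which says that an element of $\mathfrak{A}$ whose first component lies in $L^{1,\infty}(\Omega;L^{1}(S))$ already lies in $A$.

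First, by Theorem \ref{theoremmainresultex} applied to our $u \in L^{1,\infty}(\Omega;L^{1}(S)) \subseteq L^{1}(\Omega;L^{1}(S))$, the map $t \mapsto T(t)u$ is a strong solution of the abstract evolution equation, and hence $-T^{\prime}(t)u \in \mathfrak{A} T(t)u$ for a.e.\ $t \in (0,\infty)$. Equivalently, $(T(t)u,-T^{\prime}(t)u) \in \mathfrak{A}$ for a.e.\ $t$.

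Second, I would upgrade this membership from $\mathfrak{A}$ to $A$ by verifying the hypothesis $T(t)u \in L^{1,\infty}(\Omega;L^{1}(S))$. This is exactly what Proposition \ref{strongsolclosprop}.ii) delivers in the case $q=\infty$: since $u \in L^{1,\infty}(\Omega;L^{1}(S))$ means $\|u(\omega)\|_{L^{\infty}(S)} < \infty$ for $\P$-a.e.\ $\omega$, one obtains
\begin{equation*}
\P\left(\bigl\{\omega \in \Omega:\ \|(T(t)u)(\omega)\|_{L^{\infty}(S)} \leq \|u(\omega)\|_{L^{\infty}(S)}\bigr\}\right) = 1
\end{equation*}
for every $t \geq 0$, so $T(t)u \in L^{1,\infty}(\Omega;L^{1}(S))$ for every $t \geq 0$.

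Third, applying Proposition \ref{propclosaarelation} to the pair $(T(t)u,-T^{\prime}(t)u) \in \mathfrak{A}$ (which is admissible by the preceding step) gives $(T(t)u,-T^{\prime}(t)u) \in A$ for a.e.\ $t \in (0,\infty)$. Since $A$ is single-valued by Lemma \ref{lemmawelldefined}, this is precisely the desired identity $-T^{\prime}(t)u = AT(t)u$. I do not foresee any serious obstacle here: the statement is essentially a bookkeeping combination of three previously established facts, and the only subtle point is making sure that the a.s.\ boundedness of $u$ propagates to $T(t)u$, which is delivered cleanly by the complete-contraction property of the deterministic semigroup $T(\cdot,\gamma)$ through the fibrewise identity $(T(t)u)(\omega)=T(t,g(\omega))u(\omega)$ of Theorem \ref{deterministicidentityprop}.
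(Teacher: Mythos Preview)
Your proposal is correct and follows essentially the same route as the paper's own proof: invoke Theorem \ref{theoremmainresultex} to get $(T(t)u,-T'(t)u)\in\A$ for a.e.\ $t$, use Proposition \ref{strongsolclosprop}.ii) with $q=\infty$ to ensure $T(t)u\in L^{1,\infty}(\Omega;L^{1}(S))$, and then apply Lemma \ref{propclosaarelation} to promote the pair to $A$. Your explicit mention of the single-valuedness of $A$ (Lemma \ref{lemmawelldefined}) to justify writing an equality rather than an inclusion is a nice touch that the paper leaves implicit.
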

\begin{proof} Let $u \in L^{1,\infty}(\Omega;L^{1}(S))$. Theorem \ref{theoremmainresultex} yields that $(T(t)u,-T^{\prime}(t)u) \in \A$ for a.e. $t \in (0,\infty)$. Consequently, it follows by virtue of Lemma \ref{propclosaarelation} that it suffices to prove $\P( T(t)u \in L^{\infty}(S) ) =1,~\text{for a.e. }$ \linebreak$ t \in (0,\infty)$. But this is a trivial consequence of Proposition \ref{strongsolclosprop}.ii).
\end{proof}

Theorem \ref{mainresult} finishes the discussion on existence and uniqueness results. The remaining part of this paper is devoted to determine the asymptotic behavior of $T(t)u$.

\section{Stability results for the solution of the randomized weighted p-Laplace evolution equation with Neumann boundary conditions}
\label{section_gar} 
This section opens the investigation on the asymptotic results of $T(t)$. The asymptotic properties of $T(t,\gamma)$ have been discussed in \cite{ich}. Due to Theorem \ref{deterministicidentityprop} some of these results can be easily transfered to the current setting.\\

For any $v \in L^{1}(S)$, let $\overline{(v)}$ denote its average, i.e. $\overline{(v)}:= \frac{1}{\lambda(S)} \int \limits_{S} v d \lambda$. By slightly abusing notation, the real number $\overline{(v)}$ and the function $\varphi : S \rightarrow \mathbb{R}$ which is constantly equal to $\overline{(v)}$, is also denoted by $\overline{(v)}$.\\
Moreover, for $u: \Omega \rightarrow L^{1}(S)$ introduce $\overline{(u)}(\omega):=\overline{(u(\omega))}$.\\ 
Finally, let $C_{S,q}$ denote the Poincar\'{e} constant of $S$ in $L^{q}(S)$, i.e. $C_{S,q} \in (0,\infty)$ is the smallest constant such that
\begin{align*}
||v-\overline{(v)}||_{L^{q}(S)} \leq C_{S,q} ||\nabla v||_{L^{q}(S;\mathbb{R}^{n})},~\forall v \in W^{1,q}(S),
\end{align*}
where $q \in [1,\infty)$. The Poincar\'{e} inequality ensures that $C_{S,q}$ exists.\\ 
Moreover, introduce the real-valued random variable $\Delta_{u}:\Omega \rightarrow [0,\infty)$ by 
\begin{align*}
\Delta_{u}(\omega):=||u(\omega)-\overline{(u(\omega))}||^{2}_{L^{2}(S)} 
\end{align*}
for any $u \in L^{1}(\Omega;L^{1}(S))$, with $\P(u \in L^{2}(S))=1$.

\begin{lemma}\label{lemmaichpropasymp} Let $\gamma \in L^{1}_{g_{1},g_{2}}(S)$. Then one has $\overline{( T(t,\gamma)v)}=\overline{(v)}$ for any $t \in [0,\infty)$ and $v \in L^{1}(S)$. In addition, 
\begin{align*}
||T(t,\gamma)v-\overline{(v)}||_{L^{1}(S)} \leq C_{S,1} \left( \int \limits_{S} \gamma^{\frac{1}{1-p}} d \lambda \right)^{\frac{p-1}{p}} \left( \frac{2}{|p-2|} \right)^{\frac{1}{p}} ||v-\overline{(v)}_{S}||^{\frac{2}{p}}_{L^{2}(S)} \left(\frac{1}{t}\right)^{\frac{1}{p}}
\end{align*} 
for any $t \in (0,\infty)$ and $v \in L^{2}(S)$.
\end{lemma}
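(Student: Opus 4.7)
The plan is to prove the two assertions separately: the mass-conservation identity by a direct test-function argument on the operator $a(\gamma)$ and its closure $\mathfrak{a}(\gamma)$, and the decay estimate essentially by invoking the corresponding deterministic result from \cite{ich}.

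For mass conservation, the first step is the observation that $\int_S \hat{f}\,d\lambda = 0$ for every $(f,\hat{f}) \in a(\gamma)$: the constant function $\varphi \equiv 1$ lies in $W^{1,p}(S) \cap L^\infty(S)$ and has zero gradient, so plugging it into the weak identity in the definition of $a(\gamma)$ gives exactly this equality. Because the linear functional $\hat{f} \mapsto \int_S \hat{f}\,d\lambda$ is $L^1(S)$-continuous, the property carries over to the closure $\mathfrak{a}(\gamma)$. For $v \in D(\mathfrak{a}(\gamma))$, the strong solution $V(t) := T(t,\gamma)v$ of \cite[Theorem 3.7]{main} satisfies $V'(t) \in -\mathfrak{a}(\gamma)V(t)$ for a.e.\ $t \in (0,\infty)$, so integrating this identity over $S$ together with what was just shown yields $\tfrac{d}{dt}\overline{V(t)} = 0$ a.e., hence $\overline{V(t)} = \overline{v}$ for every $t \geq 0$. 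Density of $D(\mathfrak{a}(\gamma))$ in $L^1(S)$, the $L^1$-contraction property of Remark \ref{remarksemicontracexpf}, and the $L^1$-continuity of the averaging functional together extend the identity to arbitrary $v \in L^1(S)$.

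For the decay estimate, the plan is to invoke the corresponding deterministic result from \cite{ich} after verifying its hypotheses. Since $g_1 \leq \gamma \leq g_2$ almost everywhere, one has $\gamma^{1/(1-p)} \leq g_1^{1/(1-p)}$, so in particular $\gamma^{1/(1-p)} \in L^1(S)$ and the constant $\bigl(\int_S \gamma^{1/(1-p)}\,d\lambda\bigr)^{(p-1)/p}$ in the bound is finite. The underlying mechanism developed in \cite{ich} is the following chain: the energy identity $\tfrac{d}{dt}\tfrac{1}{2}\|V(t)-\overline{v}\|_{L^2}^2 = -\int_S \gamma|\nabla V(t)|^p\,d\lambda$ for the strong solution; the weighted H\"older inequality $\|\nabla V\|_{L^1} \leq \bigl(\int_S \gamma|\nabla V|^p\,d\lambda\bigr)^{1/p}\bigl(\int_S \gamma^{1/(1-p)}\,d\lambda\bigr)^{(p-1)/p}$ combined with the $L^1$-Poincar\'e inequality, which together bound $\|V(t)-\overline{v}\|_{L^1}$ by a constant times the $p$-th root of the dissipation rate; and a homogeneity/scaling argument exploiting the positive $(p-1)$-homogeneity of $\mathfrak{a}(\gamma)$, which via \cite[Theorem 4.4]{cao} produces the pointwise-in-$t$ control that gives the exact prefactor $\bigl(\tfrac{2}{|p-2|}\bigr)^{1/p}$ and the $t^{-1/p}$ rate.

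The main obstacle in executing the argument from scratch would be the last ingredient of the second step: converting the pointwise dissipation identity into an explicit $t^{-1/p}$ decay with precisely the stated constant requires the homogeneity machinery of \cite{cao}, and tracking the prefactor carefully. Since \cite{ich} carries out exactly this computation under the hypothesis $\gamma \in L^1_{g_1,g_2}(S)$, my strategy is to cite it for the decay estimate and present only the short self-contained argument sketched above for mass conservation.
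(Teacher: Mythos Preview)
Your proposal is correct and aligns with the paper's approach: the paper's proof is simply the one-line citation ``See \cite[Lemma 3.4 and Corollary 4.8]{ich},'' so both parts are delegated to \cite{ich}, and you do the same for the decay estimate. The only difference is that for mass conservation you supply a short self-contained argument (test with $\varphi\equiv 1$, pass to the closure, extend by density) rather than citing \cite[Lemma 3.4]{ich}; this is a harmless expansion of the cited result, not a different route.
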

\begin{proof} See \cite[Lemma 3.4 and Corollary 4.8]{ich}.
\end{proof} 

\begin{corollary}\label{boundL1cor} Let $t \in [0,\infty)$ and $u \in L^{1}(\Omega;L^{1}(S))$. Then $\overline{((T(t)u)(\omega))}=\overline{(u(\omega))}$ for $\P$-a.e. $\omega \in \Omega$. If in addition $\P(u \in L^{2}(S))=1$ and $t \neq 0$, then one has
\begin{align*}
||(T(t)u)(\omega)-\overline{(u(\omega))}||_{L^{1}(S)} \leq C_{S,1} \lambda(S)^{\frac{p-1}{p}}\left( \frac{2}{g_{1}|p-2|} \right)^{\frac{1}{p}} \Delta_{u}(\omega)^{\frac{1}{p}}  \left(\frac{1}{t}\right)^{\frac{1}{p}}.
\end{align*}
\end{corollary}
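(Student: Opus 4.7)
The plan is to reduce everything to the deterministic setting via Theorem \ref{deterministicidentityprop} and then apply Lemma \ref{lemmaichpropasymp} pointwise in $\omega$. Since $g \in L^{1}_{g_{1},g_{2}}(\Omega;L^{1}(S))$, one has $g(\omega) \in L^{1}_{g_{1},g_{2}}(S)$ for $\P$-a.e. $\omega \in \Omega$, so Lemma \ref{lemmaichpropasymp} is applicable with $\gamma=g(\omega)$ on the full-measure event one obtains after intersecting this $\P$-null exception with the one from Theorem \ref{deterministicidentityprop}.

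For the first assertion, fix $t \in [0,\infty)$ and $u \in L^{1}(\Omega;L^{1}(S))$. On the $\P$-a.s. event where $(T(t)u)(\omega)=T(t,g(\omega))u(\omega)$ and $g(\omega) \in L^{1}_{g_{1},g_{2}}(S)$, Lemma \ref{lemmaichpropasymp} yields $\overline{(T(t,g(\omega))u(\omega))}=\overline{(u(\omega))}$, which gives $\overline{((T(t)u)(\omega))}=\overline{(u(\omega))}$.

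For the quantitative bound, assume $\P(u \in L^{2}(S))=1$ and $t>0$. On the corresponding full-measure event, the second part of Lemma \ref{lemmaichpropasymp} gives
\begin{align*}
\|(T(t)u)(\omega)-\overline{(u(\omega))}\|_{L^{1}(S)} \leq C_{S,1}\Bigl(\int_{S}g(\omega)^{\frac{1}{1-p}}d\lambda\Bigr)^{\frac{p-1}{p}}\Bigl(\frac{2}{|p-2|}\Bigr)^{\frac{1}{p}}\Delta_{u}(\omega)^{\frac{1}{p}}\Bigl(\frac{1}{t}\Bigr)^{\frac{1}{p}},
\end{align*}
where I have used $\|u(\omega)-\overline{(u(\omega))}\|_{L^{2}(S)}^{2/p}=\Delta_{u}(\omega)^{1/p}$.

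It remains to absorb the $\omega$-dependent factor into the constants. Since $p>1$ the exponent $\frac{1}{1-p}$ is negative, and since $g(\omega) \geq g_{1}$ a.e. on $S$ $\P$-a.s., one has $g(\omega)^{\frac{1}{1-p}} \leq g_{1}^{\frac{1}{1-p}}$ a.e.\ on $S$, so that
\begin{align*}
\Bigl(\int_{S}g(\omega)^{\frac{1}{1-p}}d\lambda\Bigr)^{\frac{p-1}{p}} \leq \bigl(g_{1}^{\frac{1}{1-p}}\lambda(S)\bigr)^{\frac{p-1}{p}}=\Bigl(\frac{1}{g_{1}}\Bigr)^{\frac{1}{p}}\lambda(S)^{\frac{p-1}{p}}.
\end{align*}
Substituting this into the previous inequality yields the claimed bound. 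There is no genuine obstacle here; the only thing to watch is keeping track of the sign of $\frac{1}{1-p}$ so that the uniform lower bound $g \geq g_{1}$ is used in the correct direction, and making sure that all steps are carried out on a fixed full-measure event (the intersection of the exceptional sets from Theorem \ref{deterministicidentityprop} and from $g \in L^{1}_{g_{1},g_{2}}(\Omega;L^{1}(S))$).
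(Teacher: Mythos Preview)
Your proof is correct and follows exactly the approach the paper intends: the paper's own proof consists of the single line ``Follows by combining Lemma~\ref{lemmaichpropasymp} and Theorem~\ref{deterministicidentityprop},'' and you have simply spelled out this combination, including the elementary estimate $\bigl(\int_S g(\omega)^{1/(1-p)}\,d\lambda\bigr)^{(p-1)/p}\le g_1^{-1/p}\lambda(S)^{(p-1)/p}$ that turns the $\omega$-dependent constant into the stated one.
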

\begin{proof} Follows by combining Lemma \ref{lemmaichpropasymp} and Theorem \ref{deterministicidentityprop}.
\end{proof}

\begin{remark} Thanks to the last corollary, it is, for sufficiently integrable $u$, straightforward to derive upper bounds for $||T(t)u-\overline{(u)}||_{L^{q}(\Omega;L^{1}(S))}$, where $q \in [1,\infty)$.\\ 
What is not that easy is to establish is that $T(t)u-\overline{(u)}$ converges in $L^{q}(\Omega;L^{q}(S))$ to zero, if one only requires $u \in L^{q}(\Omega;L^{q}(S))$. 
\end{remark}

\begin{theorem} Let $q \in [1,\infty)$ and $u \in L^{q}(\Omega;L^{q}(S))$. Then one has
\begin{align*}
\lim \limits_{t \rightarrow \infty} T(t)u=\overline{(u)},\text{ in } L^{q}(\Omega;L^{q}(S)).
\end{align*} 
\end{theorem}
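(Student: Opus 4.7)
The plan is to first establish the convergence for a dense subclass where the initial data is uniformly bounded in $\omega$, and then extend by a standard density/contraction argument.

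\textbf{Step 1: Approximation by truncation.} For $u \in L^{q}(\Omega;L^{q}(S))$ and $k \in (0,\infty)$ set $u_{k}:=\tau_{k}(u)$. Then $u_{k}\in L^{1,\infty}(\Omega;L^{1}(S))$ with $\|u_{k}(\omega)\|_{L^{\infty}(S)}\leq k$ for $\P$-a.e.\ $\omega$, and by dominated convergence (using the bound $|u_{k}-u|\leq 2|u|$ together with $|u|^{q}\in L^{1}(\Omega\times S)$ and pointwise convergence $u_{k}\to u$) one has $u_{k}\to u$ in $L^{q}(\Omega;L^{q}(S))$ as $k\to\infty$.

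\textbf{Step 2: Convergence for truncated data.} Fix $k$. Since $u_{k}(\omega)\in L^{\infty}(S)\subseteq L^{2}(S)$ a.s., Corollary \ref{boundL1cor} yields $\|(T(t)u_{k})(\omega)-\overline{(u_{k}(\omega))}\|_{L^{1}(S)}\to 0$ as $t\to\infty$ for $\P$-a.e.\ $\omega$. On the other hand, Proposition \ref{strongsolclosprop}.ii) gives $\|(T(t)u_{k})(\omega)\|_{L^{\infty}(S)}\leq\|u_{k}(\omega)\|_{L^{\infty}(S)}\leq k$ a.s., and of course $|\overline{(u_{k}(\omega))}|\leq k$. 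Interpolating between $L^{1}(S)$ and $L^{\infty}(S)$,
\begin{equation*}
\|(T(t)u_{k})(\omega)-\overline{(u_{k}(\omega))}\|_{L^{q}(S)}^{q}\leq (2k)^{q-1}\|(T(t)u_{k})(\omega)-\overline{(u_{k}(\omega))}\|_{L^{1}(S)}\longrightarrow 0
\end{equation*}
for $\P$-a.e.\ $\omega$. Since this quantity is also bounded by $(2k)^{q}\lambda(S)$, dominated convergence over $\Omega$ yields $T(t)u_{k}\to\overline{(u_{k})}$ in $L^{q}(\Omega;L^{q}(S))$ as $t\to\infty$.

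\textbf{Step 3: Density argument.} For general $u\in L^{q}(\Omega;L^{q}(S))$, Proposition \ref{strongsolclosprop}.i) applied pointwise in $\omega$ and then raised to the $q$-th power and integrated over $\Omega$ gives
\begin{equation*}
\|T(t)u-T(t)u_{k}\|_{L^{q}(\Omega;L^{q}(S))}\leq \|u-u_{k}\|_{L^{q}(\Omega;L^{q}(S))}.
\end{equation*}
Moreover, by Jensen's (or Hölder's) inequality one has $|\overline{(u(\omega))}-\overline{(u_{k}(\omega))}|\leq \lambda(S)^{-1/q}\|u(\omega)-u_{k}(\omega)\|_{L^{q}(S)}$, which upon multiplying by $\lambda(S)^{1/q}$ and integrating yields $\|\overline{(u)}-\overline{(u_{k})}\|_{L^{q}(\Omega;L^{q}(S))}\leq\|u-u_{k}\|_{L^{q}(\Omega;L^{q}(S))}$. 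Given $\varepsilon>0$, first choose $k$ so large that $\|u-u_{k}\|_{L^{q}(\Omega;L^{q}(S))}<\varepsilon/3$ (Step 1), and then apply the triangle inequality
\begin{equation*}
\|T(t)u-\overline{(u)}\|\leq \|T(t)u-T(t)u_{k}\|+\|T(t)u_{k}-\overline{(u_{k})}\|+\|\overline{(u_{k})}-\overline{(u)}\|
\end{equation*}
together with Step 2 to conclude that $\limsup_{t\to\infty}\|T(t)u-\overline{(u)}\|_{L^{q}(\Omega;L^{q}(S))}\leq \varepsilon$, proving the claim.

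The main technical step is Step 2, where one has to combine the pointwise (in $\omega$) quantitative $L^{1}(S)$-decay from Corollary \ref{boundL1cor} with the pointwise $L^{\infty}(S)$-contractivity from Proposition \ref{strongsolclosprop} to obtain $L^{q}(S)$-decay, and then apply dominated convergence over $\Omega$; Steps 1 and 3 are standard truncation/contraction manipulations.
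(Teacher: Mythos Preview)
Your proof is correct and follows essentially the same strategy as the paper: truncate, use the quantitative $L^{1}(S)$-decay from Corollary~\ref{boundL1cor} together with the $L^{\infty}(S)$-contractivity of Proposition~\ref{strongsolclosprop} to get convergence for bounded data, then extend by density using the $L^{q}$-contraction. Your execution is in fact slightly cleaner than the paper's: the paper first treats $q=1$ separately and then, for $q>1$, passes through subsequences and a.e.\ convergence to upgrade from $L^{1}(S)$ to $L^{q}(S)$, whereas your direct interpolation $\|f\|_{L^{q}(S)}^{q}\leq\|f\|_{L^{\infty}(S)}^{q-1}\|f\|_{L^{1}(S)}$ handles all $q\geq 1$ at once and avoids the subsequence extraction entirely.
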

\begin{proof} Firstly, the case $q=1$ is considered. Let $u \in L^{1}(\Omega;L^{1}(S))$. Moreover, let $\varepsilon>0$ be arbitrary but fixed and introduce $\tilde{u} \in \tau (L^{1}(\Omega;L^{1}(S)))$, such that
\begin{align*}
||u-\tilde{u}||_{L^{1}(\Omega;L^{1}(S))} < \frac{\varepsilon}{3} \text{ and } \mathbb{E}||\overline{(\tilde{u})}-\overline{(u)}||_{L^{1}(S)} <  \frac{\varepsilon}{3}.
\end{align*}
Lemma \ref{denselemma} ensures the existence of $\tilde{u}$. The last inequality, together with Proposition \ref{strongsolclosprop}.ii), yields that 
\begin{align*}
||T(t)u-T(t)\tilde{u}||_{L^{1}(\Omega;L^{1}(S))} < \frac{\varepsilon}{3}.
\end{align*} 
Now let $k \in (0,\infty)$ be such that $||\tilde{u}(\omega)||_{L^{\infty}(S)} \leq k$ for $\P$-a.e. $\omega \in \Omega$. Then Corollary \ref{boundL1cor} implies that
\begin{align*}
||T(t)\tilde{u}-\overline{(\tilde{u})}||_{L^{1}(\Omega;L^{1}(S))} \leq C_{S,1} \lambda(S)^{\frac{p-1}{p}}\left( \frac{2}{g_{1}|p-2|} \right)^{\frac{1}{p}} \left(2k\sqrt{\lambda(S)}\right)^{\frac{2}{p}}\left(\frac{1}{t}\right)^{\frac{1}{p}},~\forall t >0.
\end{align*}
The last estimate yields the existence of a $t_{0} \in (0,\infty)$ such that
\begin{align*} 
||T(t)\tilde{u}-\overline{(\tilde{u})}||_{L^{1}(\Omega;L^{1}(S))} < \frac{\varepsilon}{3},~\forall t \geq t_{0}. 
\end{align*}
Hence, one has for all $t \geq t_{0}$ that
\begin{align*}
||T(t)u-\overline{(u)}||_{L^{1}(\Omega;L^{1}(S))} = ||T(t)u-T(t)\tilde{u}+T(t)\tilde{u}-\overline{(\tilde{u})}+\overline{(\tilde{u})}-\overline{(u)}||_{L^{1}(\Omega;L^{1}(S))} < \varepsilon,
\end{align*}
i.e.
\begin{align}
\label{convproof7}
\lim \limits_{t \rightarrow \infty} ||T(t)u-\overline{(u)}||_{L^{1}(\Omega;L^{1}(S))}=0,~\forall u \in L^{1}(\Omega;L^{1}(S)).
\end{align}
Now let $q \in (1,\infty)$ and $u \in L^{q}(\Omega;L^{q}(S))$.\\ 
Let $(t_{m})_{m \in \mathbb{N}} \subseteq [0,\infty)$ be such that $\lim \limits_{m \rightarrow \infty}t_{m}=\infty$. 
Moreover, let $k \in (0,\infty)$. Then (\ref{convproof7}) yields, by passing to a subsequence if necessary, that
\begin{align*}
\lim \limits_{m \rightarrow \infty} (T(t_{m})\tau_{k}(u)) (\omega) = \overline{(\tau_{k}(u))}(\omega) \text{, for $\P$-a.e. } \omega \in \Omega \text{, in } L^{1}(S).
\end{align*}
Moreover, by invoking Proposition \ref{strongsolclosprop}, one has
\begin{align*}
 ||(T(t_{m})\tau_{k}(u)) (\omega)||_{L^{\infty}(S)} \leq k,~\text{$\P$-a.e. } \omega \in \Omega,~\forall m \in \mathbb{N}.
\end{align*}
Now fix $\omega \in \Omega$ such that the last two assertions holds. Then it follows, by passing to a subsequence if necessary, that $\lim \limits_{m \rightarrow \infty} (T(t_{m})\tau_{k}(u)) (\omega) = \overline{(\tau_{k}(u))}(\omega)$ a.e. on $S$ and consequently 
\begin{align}
\label{convproof1}
\lim \limits_{m \rightarrow \infty} \left|(T(t_{m})\tau_{k}(u)) (\omega) - \overline{(\tau_{k}(u))}(\omega)\right|^{q}=0 \text{ a.e. on } S.
\end{align}
Moreover, as $||T(t_{m})\tau_{k}(u)(\omega)||_{L^{\infty}(S)} \leq  ||\overline{(\tau_{k}(u))}(\omega)||_{L^{\infty}(S)}\leq k$, one has by (\ref{convproof1}) and by virtue of dominated convergence that 
\begin{align*} 
\lim \limits_{m \rightarrow \infty}|| (T(t_{m})\tau_{k}(u)) (\omega) - \overline{(\tau_{k}(u))}(\omega)||^{q}_{L^{q}(S)}=0 \text{ for $\P$-a.e. } \omega  \in \Omega.
\end{align*}
Hence, the boundedness of $T(t_{m})\tau_{k}(u)$, resp. $\overline{(\tau_{k}(u))}$, yield $\lim \limits_{m \rightarrow \infty}|| T(t_{m})\tau_{k}(u) - \overline{(\tau_{k}(u))}||_{L^{q}(\Omega;L^{q}(S))}=0$ and consequently
\begin{align}
\label{convproof2}
\lim \limits_{t \rightarrow \infty}|| T(t)\tau_{k}(u) - \overline{(\tau_{k}(u))}||_{L^{q}(\Omega;L^{q}(S))}=0,~\forall k \in (0,\infty).
\end{align}
Moreover, as $\lim \limits_{k \rightarrow \infty} \tau_{k}(u)=u$ in $L^{1}(\Omega;L^{1}(S))$ and as $|\tau_{k}(u)|\leq |u| \in L^{q}(\Omega;L^{q}(S))$ one verifies analogously that 
\begin{align}
\label{convproof3}
\lim \limits_{k \rightarrow \infty} ||\tau_{k}(u)-u||_{L^{q}(\Omega;L^{q}(S))}=0.
\end{align}
Now let $\varepsilon>0$ be arbitrary but fixed and observe that (\ref{convproof3}) yields the existence of a $k_{0} \in (0,\infty)$, such that 
\begin{align}
\label{convproof4}
\max (||\tau_{k_{0}}(u)-u||_{L^{q}(\Omega;L^{q}(S))},||\overline{(\tau_{k_{0}}(u))}-\overline{(u)}||_{L^{q}(\Omega;L^{q}(S))})< \frac{\varepsilon}{3},
\end{align}
and consequently, one has by invoking Proposition \ref{strongsolclosprop}.ii) that
\begin{align}
\label{convproof8}
||T(t)u-T(t)\tau_{k_{0}}(u)||_{L^{q}(\Omega;L^{q}(S))} < \frac{\varepsilon}{3},~\forall t \geq 0.
\end{align}
Moreover, (\ref{convproof2}) implies that there is $t_{0} \in [0,\infty)$, such that
\begin{align}
\label{convproof5}
|| T(t)\tau_{k_{0}}(u) - \overline{(\tau_{k_{0}}(u))}||_{L^{q}(\Omega;L^{q}(S))} < \frac{\varepsilon}{3},~\forall t \geq t_{0}.
\end{align}
Finally, one deduces the claim for $q \in (1,\infty)$ from (\ref{convproof4}), (\ref{convproof8}) and (\ref{convproof5}) analogously to the case $q=1$.
\end{proof}

\begin{remark} There are two kinds of more sophisticated decay estimates in \cite{ich}. Both of them depend on the relation between $p$ and $n$.\\
Firstly, assume $p>n$, let $v \in L^{p}(S)$ and $\gamma \in L^{1}_{g_{1},g_{2}}(S)$. Then one has $T(t,\gamma)v \in L^{\infty}(S)$ for every $t \in (0,\infty)$. Moreover, if in addition $\delta \in (n-1,p-1)$ is arbitrary but fixed, there is a constant $C^{\ast}_{S,\delta}$ such that
\begin{align*} 
||T(t,\gamma)v-\overline{(v)}||_{L^{\infty}(S)} \leq C^{\ast}_{ S,\delta} \lambda(S)^{\frac{1}{1+\delta}} \left( \frac{2}{\lambda(S)g_{1}|p-2|} \right)^{\frac{1}{p}} ||v-\overline{(v)}||^{\frac{2}{p}}_{L^{2}( S )} \left(\frac{1}{t}\right)^{\frac{1}{p}},~\forall t \in (0,\infty).
\end{align*}
In addition,  $C^{\ast}_{ S,\delta}$ can be chosen as $C^{\ast}_{ S ,\delta}=\tilde{C}_{ S ,1+\delta}\left( C_{ S ,1+\delta}^{1+\delta}+1\right)^{\frac{1}{1+\delta}}$, where $\tilde{C}_{ S,1+\delta}$ is the operator norm of the continuous injection $W^{1,1+\delta}( S )\hookrightarrow L^{\infty}( S )$, cf. \cite[Theorem 4.9]{ich}.\\
Consequently, one has, by invoking Theorem \ref{deterministicidentityprop}, the following: Assume $p>n$ and let $u \in L^{1}(\Omega;L^{1}(S))$ be such that $\P(u \in L^{p}(S))=1$. Then one has for any $\delta \in (n-1,p-1)$ and $t \in (0,\infty)$ that
\begin{align} 
\label{eqasunifbound}
||(T(t)u)(\omega)-\overline{(u)}(\omega)||_{L^{\infty}(S)} \leq C^{\ast}_{ S _{\gamma},\delta} \lambda(S)^{\frac{1}{1+\delta}} \left( \frac{2}{\lambda(S)g_{1}|p-2|} \right)^{\frac{1}{p}} \Delta_{u}(\omega)^{\frac{1}{p}} \left(\frac{1}{t}\right)^{\frac{1}{p}} 
\end{align}
for $\P$-a.e. $\omega \in  \Omega$, where $C^{\ast}_{S,\delta}$ can be chosen as above.\\
Inequality (\ref{eqasunifbound}) is a very strong result which gives almost surely an upper bound on the uniform distance between the solution and its limit. Unfortunately, this strong result is only valid if $p>n$.
\end{remark}

In the deterministic setting, one verifies that if $p$ is sufficiently small, then the solution extincts after finite time, cf. \cite[Theorem 5.7]{ich}. Unfortunately, this result does not carry on to the random case in a useful way: In general, one obtains a random time of extinction.\\
Nevertheless, one can prove strong results for small $p$. Doing so requires "slightly" more effort than for large $p$ and is treated in its own section.

\section{Decay estimates for $p \in [\frac{2n}{n+2},2)\setminus \{1\}$.}
\label{section_desmallp}
The purposes of this Section is to prove the estimates (\ref{introfuckingnicebound1}) and (\ref{introfuckingnicebound2}).

\begin{remark} Let $v \in L^{2}(S)$ and $\gamma \in L^{1}_{g_{1},g_{2}}(S)$. Throughout the remaining part of this section \linebreak$f_{v,\gamma}:[0,\infty) \rightarrow [0,\infty)$ denotes the function defined by
\begin{align*}
f_{v,\gamma}(t) := \log \left( \int \limits_{S} \left( T(t,\gamma)v-\overline{(v)} \right)^{2}d\lambda+1\right)
\end{align*}
for any $t \in [0,\infty)$.
\end{remark}

The basic technique to obtain a bound on the tail function of $||T(t)u-\overline{(u)}||^{2}_{L^{2}(S)}$ is as follows: One uses Markov's inequality to bound the tail function by $\frac{\mathbb{E}( \log(||T(t)u-u\overline{(u)}||^{2}_{L^{2}(S)}+1))}{\log(\alpha+1)}$. And afterwards one uses Theorem \ref{deterministicidentityprop} together with an upper bound on $f_{v,\gamma}$ to get an upper bound on the tail function of $||T(t)u-u\overline{(u)}||^{2}_{L^{2}(S)}$. Finally, some technical calculations yield the results (\ref{introfuckingnicebound1}) and (\ref{introfuckingnicebound2}).\\
The following well known lemma (which is a version of Gr\" onwall's inequality) builds the foundation for bounding $f_{v,\gamma}$.

\begin{lemma}\label{asympgenlemma} Let $f:[0,\infty)\rightarrow [0,\infty)$ be locally Lipschitz continuous. Moreover, set $b:=f(0)$ and assume that there is a $\beta > 0$ such that
\begin{align}
\label{niceextinctlemma}
f^{\prime}(t)+\beta f (t) \leq 0, \text{ for a.e. } t \in (0,\infty).
\end{align}
Then one has
\begin{align*}
f(t) \leq b \exp(-\beta t) 
\end{align*}
for all $t \in [0,\infty)$.
\end{lemma}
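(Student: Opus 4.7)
The plan is to apply the classical integrating-factor trick and reduce the inequality to a monotonicity statement. First I would introduce the auxiliary function $g:[0,\infty) \to \mathbb{R}$ defined by
\begin{align*}
g(t) := f(t) \exp(\beta t),
\end{align*}
and note that, as the product of two locally Lipschitz continuous functions (with the exponential being bounded on every compact subinterval), $g$ is itself locally Lipschitz continuous. In particular, $g$ is locally absolutely continuous, hence differentiable a.e. on $(0,\infty)$, and the fundamental theorem of calculus applies to it on every compact subinterval of $[0,\infty)$.

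Next, at every point $t \in (0,\infty)$ where both $f$ and $g$ are differentiable, the product rule yields
\begin{align*}
g^{\prime}(t) = \exp(\beta t) \left( f^{\prime}(t) + \beta f(t) \right).
\end{align*}
By the hypothesis \eqref{niceextinctlemma} this is $\leq 0$ for a.e. $t \in (0,\infty)$. Integrating from $0$ to an arbitrary $t \in [0,\infty)$ with the help of local absolute continuity gives
\begin{align*}
g(t) - g(0) = \int_{0}^{t} g^{\prime}(s)\, ds \leq 0,
\end{align*}
so $g(t) \leq g(0) = f(0) = b$ for every $t \in [0,\infty)$. Multiplying by $\exp(-\beta t)$ yields the claimed bound $f(t) \leq b \exp(-\beta t)$.

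I do not anticipate any real obstacle here; the only minor point to be careful about is the justification that a locally Lipschitz function satisfies the fundamental theorem of calculus (so that the differential inequality can be integrated), but this is standard and follows from local absolute continuity. The positivity of $f$ is not actually used in the argument, though it is of course consistent with the conclusion.
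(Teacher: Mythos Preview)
Your argument is correct: the integrating-factor substitution $g(t)=f(t)e^{\beta t}$ reduces the differential inequality to $g'\le 0$ a.e., and local Lipschitz continuity of $g$ justifies integrating this via the fundamental theorem of calculus. The paper does not actually supply a proof of this lemma; it is introduced as a well-known Gr\"onwall-type inequality, so your write-up is in fact more detailed than what the paper provides.
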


\begin{remark} Recall that $C_{S,q}$  denotes the Poincar\'{e} constant of $S$ in $L^{q}(S)$, $q \in [1,\infty)$. In addition, let $\tilde{C}_{S,\frac{2n}{n+2}}$ denote the operator norm of the continuous injection $W^{1,\frac{2n}{n+2}}(S) \hookrightarrow L^{2}(S)$. Note that $\frac{2n}{n+2} <n$, consequently the Sobolev embedding theorem yields the existence of such an injection. 
\end{remark}

\begin{lemma}\label{lcanddifflemma} Let $\gamma \in L^{1}_{g_{1},g_{2}}(S)$ and introduce $v \in D(a(\gamma))$. Then $f_{v,\gamma}$ is locally Lipschitz continuous. Moreover, one has $ T(t,\gamma)v \in W^{1,p}(S)$ for every $t \in (0,\infty)$ and
\begin{align}
\label{lcanddifflemmaeq}
f^{\prime}_{v,\gamma}(t)\leq -2 \int \limits_{S} \gamma |\nabla T(t,\gamma)v|^{p}d\lambda \left( \int \limits_{S}\left(v-\overline{(v)}\right)^{2}d\lambda +1 \right)^{-1}
\end{align}
for a.e. $t \in (0,\infty)$.
\end{lemma}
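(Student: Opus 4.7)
My approach combines the semigroup regularity of $T(\cdot,\gamma)v$ with the weak formulation defining $a(\gamma)$, and leverages complete contractivity (Remark \ref{remarkccsg}) in two places: to keep the solution bounded in $L^{\infty}(S)$ and to dominate the denominator by its initial value.

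First, to verify $T(t,\gamma)v \in W^{1,p}(S) \cap L^{\infty}(S)$ for every $t > 0$, I would use that $v \in D(a(\gamma)) \subseteq L^{\infty}(S) \cap D(\mathfrak{a}(\gamma))$. Complete contractivity against the stationary solution $0$ yields the uniform bound $\|T(t,\gamma)v\|_{L^{\infty}(S)} \leq \|v\|_{L^{\infty}(S)}$. Since $\mathfrak{a}(\gamma)$ is m-accretive and $v \in D(\mathfrak{a}(\gamma))$, standard mild-solution theory (as invoked already in Lemma \ref{lemmadetdifflips}) places $T(t,\gamma)v$ in $D(\mathfrak{a}(\gamma))$ for every $t \geq 0$; combined with the $L^{\infty}$-bound and the deterministic analog of Lemma \ref{propclosaarelation}, namely \cite[Lemma 3.1]{ich}, this lifts $T(t,\gamma)v$ into $D(a(\gamma)) \subseteq W^{1,p}(S)$.

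For local Lipschitz continuity of $f_{v,\gamma}$, since $\log(\cdot+1)$ is globally $1$-Lipschitz on $[0,\infty)$, it suffices to prove that $t \mapsto \|T(t,\gamma)v-\overline{(v)}\|^{2}_{L^{2}(S)}$ is Lipschitz. Writing the increment via the difference-of-squares identity
\begin{align*}
\|T(t+h,\gamma)v-\overline{(v)}\|^{2}_{L^{2}} - \|T(t,\gamma)v-\overline{(v)}\|^{2}_{L^{2}} = \int_{S}(T(t+h,\gamma)v - T(t,\gamma)v)\,(T(t+h,\gamma)v + T(t,\gamma)v - 2\overline{(v)})\,d\lambda,
\end{align*}
I would bound the second factor uniformly in $L^{\infty}(S)$ by the previous step, and the first factor in $L^{1}(S)$ by $h\|a(\gamma)v\|_{L^{1}(S)}$, via the standard Lipschitz estimate for strong solutions with initial data in the domain of the generator; here $\|\mathfrak{a}(\gamma)^{\circ}v\|_{L^{1}(S)} = \|a(\gamma)v\|_{L^{1}(S)}$ once again by \cite[Lemma 3.1]{ich}.

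Finally, for the differential inequality, I would fix any $t$ at which $T(\cdot,\gamma)v$ is $L^{1}(S)$-differentiable (a cofinite set, by strong solvability) and pass to the limit in the identity above: $L^{1}$-convergence of the difference quotient tested against a uniformly $L^{\infty}$-bounded cofactor yields
\begin{align*}
\frac{d}{dt}\int_{S}(T(t,\gamma)v-\overline{(v)})^{2}\,d\lambda = 2\int_{S} T'(t,\gamma)v\,(T(t,\gamma)v-\overline{(v)})\,d\lambda.
\end{align*}
Since $T(t,\gamma)v \in L^{\infty}(S)$, \cite[Lemma 3.1]{ich} identifies $-T'(t,\gamma)v$ with $a(\gamma)T(t,\gamma)v$; testing its weak formulation against the admissible $\varphi := T(t,\gamma)v - \overline{(v)} \in W^{1,p}(S) \cap L^{\infty}(S)$ collapses the right-hand side to $-2\int_{S} \gamma|\nabla T(t,\gamma)v|^{p}\,d\lambda$, as $\nabla\overline{(v)} = 0$. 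The chain rule then expresses $f'_{v,\gamma}(t)$ as this numerator divided by $\|T(t,\gamma)v - \overline{(v)}\|^{2}_{L^{2}} + 1$. To finish, I would apply complete contractivity to $v$ and the stationary constant solution $\overline{(v)}$, giving $T(t,\gamma)v - \overline{(v)} << v - \overline{(v)}$ and hence $\|T(t,\gamma)v - \overline{(v)}\|^{2}_{L^{2}} \leq \|v - \overline{(v)}\|^{2}_{L^{2}}$; since the numerator is non-positive, enlarging the positive denominator weakens the fraction in exactly the direction required by (\ref{lcanddifflemmaeq}). The main technical subtlety is precisely this limit passage, which succeeds only because of the splitting between $L^{1}$-in-time differentiability and uniform $L^{\infty}$ control on the cofactor.
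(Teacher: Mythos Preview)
Your proof is correct and follows essentially the same logical skeleton as the paper's: Lipschitz continuity of the squared $L^{2}$-distance composed with the $1$-Lipschitz map $x\mapsto\log(x+1)$, the derivative identity $\frac{d}{dt}\int_{S}(T(t,\gamma)v-\overline{(v)})^{2}\,d\lambda=-2\int_{S}\gamma|\nabla T(t,\gamma)v|^{p}\,d\lambda$, and the denominator bound via complete contractivity against the stationary constant $\overline{(v)}$. The only difference is presentational: the paper imports each of these three ingredients as black boxes from \cite{ich} (Lemmas 3.3, 5.2, 5.3 and 4.1 there), whereas you reconstruct them inline from the weak formulation of $a(\gamma)$, the $L^{\infty}$-bound, and the $L^{1}$-Lipschitz estimate for strong solutions with domain initial data---which is arguably more transparent.

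One small slip: the set of $L^{1}(S)$-differentiability points of $T(\cdot,\gamma)v$ is of full Lebesgue measure, not ``cofinite''; strong solvability gives a.e.\ differentiability, and the complement need not be finite. This does not affect the argument.
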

\begin{proof} At first the local Lipschitz continuity will be established. Let $\tau > 0$ be given. Then the mapping defined by $[0,\tau] \ni t \mapsto \int \limits_{S} \left( T(t,\gamma)v-\overline{(v)} \right)^{2}d\lambda$ is Lipschitz continuous, cf. \cite[Lemma 5.2]{ich}. This, together with the commonly known inequality
\begin{align*}
|\log(x+1)-\log(y+1)| \leq |x-y|,~ \forall x,y \in [0,\infty).
\end{align*}
yields the Lipschitz continuity of $f_{v,\gamma}|_{[0,\tau]}$.\\

As $v \in D(a(\gamma)) \subseteq L^{\infty}(S)$, one has by virtue of \cite[Lemma 3.3]{ich} that $T(t,\gamma)v \in D(a(\gamma))$ and a fortiori $ T(t,\gamma)v \in W^{1,p}(S)$, for all $t>0$. Consequently, it remains to prove (\ref{lcanddifflemmaeq}).\\
Firstly, one has, by  \cite[Lemma 5.3]{ich} that
\begin{align}
\label{lcproof2}
\frac{\partial}{\partial t} \int \limits_{S} \left( T(t,\gamma)v-\overline{(v)} \right)^{2}d\lambda = -2 \int \limits_{S} \gamma | \nabla T(t,\gamma)v|^{p}d\lambda,~\text{for a.e. } t \in (0,\infty).
\end{align}
Moreover, it follows from \cite[Lemma 4.1]{ich} and Remark \ref{remarkccsg} that
\begin{align*}
T(t,\gamma)v-\overline{(v)} = T(t,\gamma)\left(v-\overline{(v)}\right) << v-\overline{(v)},~\forall t \in [0,\infty)
\end{align*}
which implies
\begin{align*}
\int \limits_{S}\left(T(t,\gamma)v-\overline{(v)} \right)^{2} d \lambda \leq \int \limits_{S} \left(v-\overline{(v)}\right)^{2}d \lambda,~\forall t \in [0,\infty).
\end{align*}
This, together with (\ref{lcproof2}), yields
\begin{align*}
\frac{\partial}{\partial t} f_{v,\gamma}(t) =\frac{-2 \int \limits_{S} \gamma |\nabla T(t,\gamma)v|^{p}d\lambda}{ \int \limits_{S} \left(T(t,\gamma)v-\overline{(v)} \right)^{2}d\lambda+1} \leq -2 \int \limits_{S} \gamma |\nabla T(t,\gamma)v|^{p}d\lambda \left( \int \limits_{S} \left(v-\overline{(v)}\right)^{2}d \lambda+1\right)^{-1}
\end{align*}
for a.e. $t \in (0,\infty)$.
\end{proof}

\begin{lemma}\label{fancyestlemma} Let $\gamma \in L^{1}_{g_{1},g_{2}}(S)$, introduce $v \in D(a(\gamma))$ and set $m:= \frac{2n}{n+2}$. Moreover, assume \linebreak$p \in [m,2) \setminus \{1\}$. Then one has
\begin{align*}
f_{v,\gamma}(t) \leq -f^{\prime}_{v,\gamma}(t) \frac{1}{p} \max \left( \tilde{C}^{2}_{S,m}\left( C_{S,m}^{m}+1\right)^{\frac{2}{m}}g_{1}^{-\frac{2}{p}}\lambda(S)^{\frac{p-m}{p}},1\right) \left( \int \limits_{S} \left(v-\overline{(v)}\right)^{2}d \lambda+1\right),
\end{align*} 
for a.e. $t \in (0,\infty)$.
\end{lemma}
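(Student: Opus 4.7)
The aim is to derive a pointwise (in $t$) differential inequality of Gronwall type $f'_{v,\gamma}(t)+\beta\,f_{v,\gamma}(t)\leq 0$ which, combined with Lemma~\ref{asympgenlemma}, will yield exponential decay of $f_{v,\gamma}$ needed for the tail estimates of the next section. Write $w(t):=T(t,\gamma)v-\overline{(v)}$, $A(t):=\int_S w(t)^2\,d\lambda$, and $I(t):=\int_S \gamma|\nabla T(t,\gamma)v|^p\,d\lambda$. Mass conservation (Lemma~\ref{lemmaichpropasymp}) gives $\overline{(w(t))}=0$, which legitimises applying the Poincar\'e inequality to $w(t)$; and Lemma~\ref{lcanddifflemma} furnishes the embedding $T(t,\gamma)v\in W^{1,p}(S)\subseteq W^{1,m}(S)$ (since $m\leq p$) together with the pointwise bound $f'_{v,\gamma}(t)\leq -2I(t)\bigl(\int_S (v-\overline{(v)})^2\,d\lambda+1\bigr)^{-1}$ for a.e.\ $t>0$. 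The lemma will follow once I establish the static (in $t$) estimate $f_{v,\gamma}(t)\leq \frac{2K}{p}I(t)$, with $K$ the Sobolev--Poincar\'e--H\"older constant appearing inside the maximum: substituting the $I(t)$-bound from Lemma~\ref{lcanddifflemma} then produces the lemma, the $\max(\cdot,1)$ being cosmetic padding which guarantees the coefficient is bounded below by a positive constant.

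I would pass from $f_{v,\gamma}$ to $I(t)$ via the chain $\log(A+1)\leq \frac{2}{p}\log\bigl(A^{p/2}+1\bigr)\leq \frac{2}{p}A^{p/2}$, where the first inequality comes from $(A+1)^{p/2}\leq A^{p/2}+1$ (valid because $x\mapsto x^{p/2}$ is concave on $[0,\infty)$ and vanishes at $0$, hence subadditive, for $p/2\leq 1$) and the second is $\log(1+x)\leq x$. For $A^{p/2}=\|w\|_{L^2}^p$ I would combine the Sobolev embedding $W^{1,m}(S)\hookrightarrow L^2(S)$ (available precisely because the Sobolev conjugate exponent of $m=\frac{2n}{n+2}$ is $2$, with operator norm $\tilde{C}_{S,m}$) with the Poincar\'e inequality $\|w\|_{L^m}\leq C_{S,m}\|\nabla w\|_{L^m}$, yielding $\|w\|_{L^2}\leq \tilde{C}_{S,m}(C_{S,m}^m+1)^{1/m}\|\nabla w\|_{L^m}$. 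To bridge $\|\nabla w\|_{L^m}$ to $I(t)$, I would apply H\"older's inequality with conjugate exponents $p/m$ and $p/(p-m)$ to the factorisation $|\nabla w|^m=(\gamma|\nabla w|^p)^{m/p}\cdot\gamma^{-m/p}$, bounding the second factor via $\gamma\geq g_1$ a.e.\ to get $\int_S |\nabla w|^m\,d\lambda\leq g_1^{-m/p}\lambda(S)^{(p-m)/p}\,I(t)^{m/p}$. Raising to the power $p/m$ then produces a bound on $\|\nabla w\|_{L^m}^p$ (hence on $A^{p/2}$) that is linear in $I(t)$.

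The analytic content is essentially elementary once the chain above is in place; the one genuinely delicate step is the algebraic bookkeeping at the end, where one must raise the intermediate inequalities to the right powers (sometimes $p$, sometimes $2$, sometimes $m/p$ or $p/m$) so that the various factors collapse into exactly the constant $\tilde{C}_{S,m}^2(C_{S,m}^m+1)^{2/m}g_1^{-2/p}\lambda(S)^{(p-m)/p}$ advertised in the statement. The likely source of subtlety is that raising $\int_S|\nabla w|^m\,d\lambda$ to a power other than the one suggested by $\|\nabla w\|_{L^m}^p$ would yield a different $\lambda(S)$-exponent, so the correct choice of which power to take at which point in the chain is the main thing to get right. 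Once the auxiliary bound $f_{v,\gamma}(t)\leq \frac{2K}{p}I(t)$ is proved with the correct $K$, substituting $I(t)\leq -f'_{v,\gamma}(t)\bigl(\int_S(v-\overline{(v)})^2\,d\lambda+1\bigr)/2$ from Lemma~\ref{lcanddifflemma} completes the argument.
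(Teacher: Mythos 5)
Your overall strategy and every analytic ingredient --- mass conservation to justify Poincar\'e, the embedding $W^{1,m}(S)\hookrightarrow L^{2}(S)$ with $m^{*}=2$, H\"older with exponents $p/m$ and $p/(p-m)$ together with $\gamma\geq g_{1}$, and the final substitution of the derivative bound from Lemma \ref{lcanddifflemma} --- coincide with the paper's proof. The divergence is purely in the order of the elementary inequalities, and it is not harmless. The paper works at the level of $A(t):=\int_{S}(T(t,\gamma)v-\overline{(v)})^{2}d\lambda$: it first proves $A\leq K\,I^{2/p}$ with $K=\tilde{C}^{2}_{S,m}(C_{S,m}^{m}+1)^{2/m}g_{1}^{-2/p}\lambda(S)^{(p-m)/p}$, then extracts the constant from inside the logarithm via Bernoulli's inequality, $\log(1+Kx)\leq\max(K,1)\log(1+x)$, and only afterwards linearises using $x^{2/p}+1\leq(x+1)^{2/p}$ and $\log(1+x)\leq x$. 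You instead linearise first, $\log(A+1)\leq\frac{2}{p}A^{p/2}$, and then bound $A^{p/2}$ linearly in $I$; this forces you to raise the Sobolev--Poincar\'e--H\"older constant to the power $p/2$ along the way, and your own H\"older estimate, raised to the power $p/m$, yields
\[
A^{p/2}\leq \tilde{C}^{p}_{S,m}\left(C_{S,m}^{m}+1\right)^{\frac{p}{m}}g_{1}^{-1}\lambda(S)^{\frac{p-m}{m}}\,I=:K'I,
\]
so your final inequality carries the coefficient $\frac{K'}{p}$ in place of $\frac{1}{p}\max(K,1)$.

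This is where the gap lies: $K'$ is not dominated by $\max(K,1)$ in general. With $\lambda(S)=1$ one has $K'=K^{p/2}\leq\max(K,1)$, but the exponent of $\lambda(S)$ in $K'$ is $\frac{p-m}{m}$, strictly larger than the exponent $\frac{p-m}{2}$ appearing in $K^{p/2}$, so for $p>m$ and $\lambda(S)$ large enough $K'$ exceeds $\max(K,1)$. Since $-f'_{v,\gamma}\geq 0$, an inequality with a larger coefficient is strictly weaker, so your chain does not deliver the lemma with the advertised constant --- and that constant is not decorative, as it is carried into $C^{\ast}_{S,m,p,g_{1}}$ in (\ref{determinisiticfinaleqc}) and into the explicit bounds of Theorem \ref{fuckingbeautifultheorem}. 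Relatedly, your description of the $\max(\cdot,1)$ as ``cosmetic padding'' misdiagnoses its role: it is exactly the factor Bernoulli's inequality demands when pulling $K$ out of the logarithm ($K$ when $K\geq1$, and $1$ when $K<1$). The fix is one line: do not raise to the power $p/2$; keep $A\leq K I^{2/p}$, write $f_{v,\gamma}\leq\log(KI^{2/p}+1)\leq\max(K,1)\log(I^{2/p}+1)\leq\max(K,1)\frac{2}{p}I$, and then substitute the bound on $I$ exactly as you intended.
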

\begin{proof} Firstly, one infers by combining the Sobolev embedding theorem and $\overline{(T(t,\gamma)v)}=\overline{(v)}$ that 
\begin{align*}
\int \limits_{S} \left( T(t,\gamma)v-\overline{(v)}\right)^{2}d \lambda \leq \tilde{C}^{2}_{S,m} \left( ||T(t,\gamma)v-\overline{(v)}||^{m}_{L^{m}(S)}+||\nabla T(t,\gamma)v||^{m}_{L^{m}(S;\mathbb{R}^{n})}\right)^{\frac{2}{m}}.
\end{align*}
Using this and Poincar\'{e}'s inequality yields
\begin{align}
\label{decayhelplemmaproof1}
\int \limits_{S} \left( T(t,\gamma)v-\overline{(v)}\right)^{2}d \lambda \leq \tilde{C}^{2}_{S,m}\left( C_{S,m}^{m}+1\right)^{\frac{2}{m}} \left( \int \limits_{S} |\nabla T(t,\gamma)v|^{m}d\lambda \right)^{\frac{2}{m}},~\forall t \in [0,\infty),
\end{align}
which is finite as $p \geq m$ and $T(t,\gamma)v \in W^{1,p}(S)$, by Lemma \ref{lcanddifflemma}.\\
Consequently, one has by observing that $p \geq m$ and $\gamma \geq g_1$ and by applying (\ref{decayhelplemmaproof1}) as well as H\"older's inequality that
\begin{align*}
f_{v,\gamma}(t) \leq \log \left( \tilde{C}^{2}_{S,m}\left( C_{S,m}^{m}+1\right)^{\frac{2}{m}} g_{1}^{-\frac{2}{p}}\lambda(S)^{\frac{p-m}{p}} \left( \int \limits_{S} {\gamma}| \nabla T(t,\gamma)v|^{p}d\lambda \right)^{\frac{2}{p}}+1\right),~\forall t \in [0,\infty).
\end{align*} 
Now it is plain that  $\tilde{C}^{2}_{S,m}\left( C_{S,m}^{m}+1\right)^{\frac{2}{m}} g_{1}^{-\frac{2}{p}}\lambda(S)^{\frac{p-m}{p}} \leq \max( \tilde{C}^{2}_{S,m}\left( C_{S,m}^{m}+1\right)^{\frac{2}{m}} g_{1}^{-\frac{2}{p}}\lambda(S)^{\frac{p-m}{p}},1) $ and hence employing Bernoulli's inequality yields
\begin{align}
\label{decayhelplemmaproof2}
f_{v,\gamma}(t) \leq \max \left( \tilde{C}^{2}_{S,m}\left( C_{S,m}^{m}+1\right)^{\frac{2}{m}}g_{1}^{-\frac{2}{p}}\lambda(S)^{\frac{p-m}{p}},1\right) \log \left( \left(\int \limits_{S} \gamma | \nabla T(t,\gamma)v|^{p}d\lambda \right)^{\frac{2}{p}}+1\right)
\end{align}
for all $t \in [0,\infty)$.\\
Consequently, one has by (\ref{decayhelplemmaproof2}) and by using the well known inequalities $x^{\frac{2}{p}}+1=x^{\frac{2}{p}}+1^{\frac{2}{p}} \leq (x+1)^{\frac{2}{p}}$ and $\log(x+1) \leq x$ for all $x\geq 0$ that  
\begin{align*}
f_{v,\gamma}(t) \leq  \max \left( \tilde{C}^{2}_{S,m}\left( C_{S,m}^{m}+1\right)^{\frac{2}{m}}g_{1}^{-\frac{2}{p}}\lambda(S)^{\frac{p-m}{p}},1\right)\frac{2}{p}\int \limits_{S} \gamma |\nabla T(t,\gamma)v|^{p}d\lambda.
\end{align*}
Finally, one infers the claim from the last inequality and (\ref{lcanddifflemmaeq}).
\end{proof}

\begin{lemma}\label{determinisiticfinallemma} Let $\gamma \in L^{1}_{g_{1},g_{2}}(S)$, introduce $v \in L^{2}(S)$, set $m:= \frac{2n}{n+2}$ and assume $p \in [m,2) \setminus \{1\}$. Then one has
\begin{align}
\label{determinisiticfinaleq}
\log \left( \int \limits_{S} \left( T(t,\gamma)v-\overline{(v)}\right)^{2}d\lambda+1\right) \leq 2||v-\overline{(v)}||_{L^{2}(S)} \exp \left(\frac{-C^{\ast}_{S,m,p,g_{1}}t}{1+||v-\overline{(v)}||^{2}_{L^{2}(S)}}\right)
\end{align}
for every $t \in [0,\infty)$, where
\begin{align}
\label{determinisiticfinaleqc}
C^{\ast}_{S,m,p,g_{1}} =  p \left(\max \left( \tilde{C}^{2}_{S,m}\left( C_{S,m}^{m}+1\right)^{\frac{2}{m}}g_{1}^{-\frac{2}{p}}\lambda(S)^{\frac{p-m}{p}},1\right)\right)^{-1}.
\end{align}
\end{lemma}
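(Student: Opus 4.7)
The plan is to first prove the estimate for $v \in D(a(\gamma))$ by combining Lemma \ref{fancyestlemma} with the Gronwall-type Lemma \ref{asympgenlemma}, and then extend to arbitrary $v \in L^{2}(S)$ by density and continuity.

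First I would rewrite Lemma \ref{fancyestlemma} as the differential inequality
\begin{align*}
f^{\prime}_{v,\gamma}(t) + \beta f_{v,\gamma}(t) \leq 0,\quad \text{for a.e. } t \in (0,\infty),\qquad \beta := \frac{C^{\ast}_{S,m,p,g_{1}}}{1+\lVert v-\overline{(v)}\rVert^{2}_{L^{2}(S)}},
\end{align*}
where the constant $C^{\ast}_{S,m,p,g_{1}}$ is precisely (\ref{determinisiticfinaleqc}). Since $f_{v,\gamma}$ is locally Lipschitz continuous by Lemma \ref{lcanddifflemma}, Lemma \ref{asympgenlemma} applies and gives $f_{v,\gamma}(t)\leq f_{v,\gamma}(0)\exp(-\beta t)$.

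Next I would bound the initial value. By definition $f_{v,\gamma}(0)=\log(1+\lVert v-\overline{(v)}\rVert^{2}_{L^{2}(S)})$, and the elementary inequality $\log(1+x)\leq 2\sqrt{x}$ for $x\geq 0$ (both sides vanish at $x=0$, and comparison of derivatives reduces to $x^{2}+x+1\geq 0$) yields $f_{v,\gamma}(0)\leq 2\lVert v-\overline{(v)}\rVert_{L^{2}(S)}$. Substituting this into the Gronwall bound produces exactly (\ref{determinisiticfinaleq}) for every $v \in D(a(\gamma))$.

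To extend from $D(a(\gamma))$ to $v\in L^{2}(S)$, I would invoke density together with continuity of both sides of (\ref{determinisiticfinaleq}) with respect to $v$ in $L^{2}(S)$. The complete contraction property of $T(t,\gamma)$ recalled in Remark \ref{remarkccsg}, combined with Remark \ref{remarkca}, gives $\lVert T(t,\gamma)v_{1}-T(t,\gamma)v_{2}\rVert_{L^{2}(S)}\leq \lVert v_{1}-v_{2}\rVert_{L^{2}(S)}$, so if $(v_{n})_{n} \subseteq D(a(\gamma))$ converges to $v$ in $L^{2}(S)$, then $T(t,\gamma)v_{n}\to T(t,\gamma)v$ in $L^{2}(S)$ and $\overline{(v_{n})}\to\overline{(v)}$, allowing us to pass to the limit in the estimate applied to each $v_{n}$.

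The main obstacle will be the density step, specifically producing an approximating sequence in $D(a(\gamma))$ that converges in $L^{2}(S)$ rather than only $L^{1}(S)$. I would handle this by smoothing via the semigroup itself: for $\varepsilon>0$, the element $T(\varepsilon,\gamma)v$ lies in $D(\mathfrak{a}(\gamma))$ (cf. the proof of Lemma \ref{lemmadetdifflips}) and, after truncation, in $D(a(\gamma))$; applying the already-established bound to this regularized initial value and letting $\varepsilon\searrow 0$, while exploiting the $L^{2}$-contraction of $T(t,\gamma)$ and the semigroup property $T(t,\gamma)T(\varepsilon,\gamma)v=T(t+\varepsilon,\gamma)v$, yields the stated inequality for the original $v\in L^{2}(S)$.
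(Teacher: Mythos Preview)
Your argument for $v\in D(a(\gamma))$ is exactly the paper's: rewrite Lemma~\ref{fancyestlemma} as $f'_{v,\gamma}+\beta f_{v,\gamma}\le 0$ with the stated $\beta$, apply Lemma~\ref{asympgenlemma}, and bound $f_{v,\gamma}(0)$ via $\log(1+x^2)\le 2x$.

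The only divergence is in the extension to $v\in L^2(S)$. The paper simply invokes \cite[Lemma~5.6]{ich}, which gives directly that $D(a(\gamma))$ is dense in $L^2(S)$, and then passes to the limit using the $L^2$-contractivity of $T(t,\gamma)$. Your self-contained route via semigroup regularization is workable but the order of operations is off: truncating $T(\varepsilon,\gamma)v$ does \emph{not} in general produce an element of $D(a(\gamma))$, since truncation need not preserve membership in $D(\mathfrak{a}(\gamma))$. The fix is to truncate first: for $v\in L^2(S)$ set $w_k:=\tau_k(v)\in L^\infty(S)$, so that $T(\varepsilon,\gamma)w_k\in D(\mathfrak{a}(\gamma))\cap L^\infty(S)=D(a(\gamma))$ (the equality by \cite[Lemma~3.1]{ich}, as used in Lemma~\ref{propclosaarelation}); then $T(\varepsilon,\gamma)w_k\to w_k$ in $L^2(S)$ as $\varepsilon\searrow 0$ (combine $L^1$-continuity at $0$ with the uniform bound $\|T(\varepsilon,\gamma)w_k\|_{L^\infty}\le k$) and $w_k\to v$ in $L^2(S)$. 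With this adjustment your density step goes through, but the paper's citation of \cite[Lemma~5.6]{ich} is the cleaner path.
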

\begin{proof} Firstly, assume $v \in D(a(\gamma))$ and introduce $\beta:= C^{\ast}_{S,m,p,g_{1}} \left( 1+||v-\overline{(v)}||^{2}_{L^{2}(S)} \right)^{-1}$. Then one has, by recalling Lemma \ref{fancyestlemma}, that $f^{\prime}_{v,\gamma}(t)+\beta f_{v,\gamma}(t) \leq 0$ for a.e. $t \in (0,\infty)$ which yields, by invoking Lemma \ref{asympgenlemma} that $f_{v,\gamma}(t) \leq f_{v,\gamma}(0)\exp(-\beta t)$ for every $t \in [0,\infty)$ and therefore 
\begin{align}
\label{determinisiticfinalproof1}
\log \left( \int \limits_{S} \left( T(t,\gamma)v-\overline{(v)}\right)^{2}d\lambda+1\right) \leq \log \left( \int \limits_{S} \left( v-\overline{(v)}\right)^{2}d\lambda+1\right)\exp \left(\frac{-C^{\ast}_{S,m,p,g_{1}}t}{1+||v-\overline{(v)}||^{2}_{L^{2}(S)}}\right).
\end{align}
Consequently, as $\log(x^{2}+1) \leq 2x$ for all $x \geq 0$ one obtains
\begin{align}
\label{determinisiticfinalproof2}
\log \left( \int \limits_{S} \left( v-\overline{(v)}\right)^{2}d\lambda+1\right)  = \log(||v-\overline{(v)}||_{L^{2}(S)}^{2}+1) \leq 2 ||v-\overline{(v)}||_{L^{2}(S)}.
\end{align}
Hence, one has, by combining (\ref{determinisiticfinalproof1}) and (\ref{determinisiticfinalproof2}) that (\ref{determinisiticfinaleq}) holds, if $v \in D(a(\gamma))$.\\
Now let $v \in L^{2}(S)$ and introduce $(v_{k})_{k \in \mathbb{N}} \subseteq D(a(\gamma))$ such that $\lim \limits_{k \rightarrow \infty}v_{k}=v$ in $L^{2}(S)$. Such a sequence exists, cf. \cite[Lemma 5.6]{ich}.\\
Then trivially $\lim \limits_{k \rightarrow \infty}\overline{(v_{k})} = \overline{(v)}$ and moreover, one has by contractivity (cf. Remark \ref{remarksemicontracexpf}.ii)) that $\lim \limits_{k \rightarrow \infty} T(t,\gamma)v_{k} = T(t,\gamma)v$ in $L^{2}(S)$.\\
As the mappings $[0,\infty) \ni x \mapsto \log(x+1)$ and $[0,\infty) \ni x \mapsto \exp(-(x+1)^{-1}C^{\ast}_{S,m,p,g_{1}}t)$ are continuous, the claim follows.
\end{proof}

\begin{remark} In the sequel $C^{\ast}_{S,m,p,g_{1}}$ denotes the constant defined in (\ref{determinisiticfinaleqc}). The previous lemma brings us in the position to prove the main result of this section.
\end{remark}

\begin{theorem}\label{fuckingbeautifultheorem} Let $u \in L^{1}(\Omega;L^{1}(S))$, $t \in (0,\infty)$, $\alpha >0$ and assume that $p \in [m,2) \setminus \{1\}$, where $m:=\frac{2n}{n+2}$. Then all of the following assertions hold.\\ 
If $\Delta_{u} \in L^{1}(\Omega)$, one has
\begin{align}
\label{fuckingbeautifultheoremeq1}
\P \left( \int \limits_{S} (T(t)u-\overline{(u)})^{2}d\lambda > \alpha \right) \leq \frac{2}{\log(\alpha+1)}~ \left(\E(\Delta_{u}) \E \left(\exp \left(\frac{-2tC^{\ast}_{S,m,p,g_{1}}}{1+\Delta_{u}}\right)\right) \right)^{\frac{1}{2}}.
\end{align}
If $r \in [1,\infty)$ and $\Delta_{u} \in L^{2r}(\Omega)$, one has
\begin{align}
\label{fuckingbeautifultheoremeq2}
\P \left( \int \limits_{S} (T(t)u-\overline{(u)})^{2}d\lambda > \alpha \right) \leq \left(\frac{1}{t}\right)^{r} \frac{2}{\log(\alpha+1)} \left( \frac{r}{2C^{\ast}_{S,m,p,g_{1}}}\right)^{r} \left(\E(\Delta_{u})\E((1+\Delta_{u})^{2r})\right)^{\frac{1}{2}}.
\end{align}
If there is an $\varepsilon>0$ such that $e^{\varepsilon \Delta_{u}} \in L^{1}(\Omega)$, one has
\begin{align}
\label{fuckingbeautifultheoremeq3}
\P \left( \int \limits_{S} (T(t)u-\overline{(u)})^{2}d\lambda > \alpha \right) \leq \exp\left(-t^{\frac{1}{2}}\left(\frac{\varepsilon C^{\ast}_{S,m,p,g_{1}}}{2}\right)^{\frac{1}{2}}\right)\frac{2\exp(\frac{\varepsilon}{2})}{\log(\alpha+1)} \left(\E(\Delta_{u})\E \left(\exp \left(\varepsilon\Delta_{u}\right)\right)\right)^{\frac{1}{2}} .
\end{align} 
\end{theorem}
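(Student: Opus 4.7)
The plan is to combine Markov's inequality with the deterministic pointwise decay estimate from Lemma~\ref{determinisiticfinallemma}. Writing $X := \int_{S}(T(t)u-\overline{(u)})^{2}d\lambda$, the monotonicity of $x\mapsto\log(x+1)$ on $[0,\infty)$ combined with Markov gives
\[
\P(X > \alpha) \;=\; \P\!\left(\log(X+1) > \log(\alpha+1)\right) \;\leq\; \frac{\E[\log(X+1)]}{\log(\alpha+1)}.
\]
By Theorem~\ref{deterministicidentityprop} one has $(T(t)u)(\omega)=T(t,g(\omega))u(\omega)$ $\P$-a.s., and $g(\omega)\in L^{1}_{g_{1},g_{2}}(S)$ with probability one. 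Applying Lemma~\ref{determinisiticfinallemma} pointwise in $\omega$ to $v=u(\omega)$ and $\gamma=g(\omega)$ therefore produces the key $\P$-a.s.\ pointwise bound
\[
\log(X(\omega)+1) \;\leq\; 2\,\Delta_{u}(\omega)^{1/2}\exp\!\left(\frac{-C^{\ast}_{S,m,p,g_{1}}\,t}{1+\Delta_{u}(\omega)}\right).
\]
All three claims then reduce to estimating the expectation of this right-hand side in three distinct ways.

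For (\ref{fuckingbeautifultheoremeq1}), I would apply the Cauchy--Schwarz inequality directly to the product $\Delta_{u}^{1/2}\cdot\exp(-C^{\ast}t/(1+\Delta_{u}))$ in $L^{2}(\Omega)$, which splits the expectation into $(\E\Delta_{u})^{1/2}\left(\E\exp(-2C^{\ast}t/(1+\Delta_{u}))\right)^{1/2}$. The hypothesis $\Delta_{u}\in L^{1}(\Omega)$ is exactly what is needed for the first factor to be finite, and the second factor is the one that appears on the right-hand side of (\ref{fuckingbeautifultheoremeq1}); no further manipulation is required.

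For (\ref{fuckingbeautifultheoremeq2}), the exponential decay in $t/(1+\Delta_{u})$ must be converted into polynomial decay in the same ratio. The elementary one-variable inequality $e^{-z} \leq (r/(2z))^{r}$, which holds for all $z>0$ and $r\geq 1$ (the sup of $(2z/r)^{r}e^{-z}$ over $z>0$ equals $2^{r}e^{-r}\leq 1$), applied with $z = C^{\ast}t/(1+\Delta_{u})$ gives
\[
\exp\!\left(\frac{-C^{\ast}t}{1+\Delta_{u}}\right) \;\leq\; \left(\frac{r}{2C^{\ast}t}\right)^{r}(1+\Delta_{u})^{r}.
\]
A final Cauchy--Schwarz application on $\E[\Delta_{u}^{1/2}(1+\Delta_{u})^{r}]$ then yields $(\E\Delta_{u})^{1/2}(\E(1+\Delta_{u})^{2r})^{1/2}$, and the prefactor reorganises as $(1/t)^{r}(r/(2C^{\ast}))^{r}$, matching the statement.

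For (\ref{fuckingbeautifultheoremeq3}) the task is to extract a $\sqrt{t}$-rate of exponential decay while only paying the integrability cost that the hypothesis $e^{\varepsilon\Delta_{u}}\in L^{1}(\Omega)$ affords. I would decouple $t$ from $\Delta_{u}$ using the AM--GM inequality $a/y + by \geq 2\sqrt{ab}$, with $a = C^{\ast}t$, $b = \varepsilon/2$ and $y = 1+\Delta_{u}$, producing
\[
\exp\!\left(\frac{-C^{\ast}t}{1+\Delta_{u}}\right) \;\leq\; e^{-\sqrt{2\varepsilon C^{\ast}t}}\,e^{\varepsilon/2}\,e^{(\varepsilon/2)\Delta_{u}}.
\]
A final Cauchy--Schwarz separates $(\E\Delta_{u})^{1/2}$ from $(\E e^{\varepsilon\Delta_{u}})^{1/2}$ and gives the structure announced in (\ref{fuckingbeautifultheoremeq3}). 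The only genuine subtlety in the whole proof lies in this last step: the deterministic rate $C^{\ast}t/(1+\Delta_{u})$ from Lemma~\ref{determinisiticfinallemma} degenerates to zero on the event that $\Delta_{u}$ is large, so no uniform-in-$\omega$ exponential decay is available, and the art is to trade that degeneracy against the exponential moment assumption by choosing $b$ proportional to $\varepsilon$ in AM--GM. Once this trade is set up correctly, every other step of the proof is a routine application of Markov, Cauchy--Schwarz and a scalar inequality.
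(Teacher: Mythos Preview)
Your proof is correct and follows the same overall architecture as the paper: Markov's inequality applied to $\log(X+1)$, the pointwise decay from Lemma~\ref{determinisiticfinallemma} via Theorem~\ref{deterministicidentityprop}, and then Cauchy--Schwarz. For (\ref{fuckingbeautifultheoremeq1}) the arguments are identical.

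For (\ref{fuckingbeautifultheoremeq2}) and (\ref{fuckingbeautifultheoremeq3}) your route is genuinely cleaner than the paper's. The paper first applies (\ref{fuckingbeautifultheoremeq1}) and then manipulates the expectation $\E[\exp(-2tC^{\ast}/(1+\Delta_{u}))]$ that sits inside the square root: for (\ref{fuckingbeautifultheoremeq2}) it rewrites $t^{2r}e^{-2tC^{\ast}\tilde\Delta_u}$ and invokes the scalar bound $\log(x^{2})\leq x$, and for (\ref{fuckingbeautifultheoremeq3}) it splits into the events $\{\beta>t^{1/2}C^{\ast}\tilde\Delta_u\}$ and its complement. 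You instead apply the scalar inequality ($e^{-z}\leq(r/(2z))^{r}$, respectively AM--GM) \emph{before} Cauchy--Schwarz, directly to $\exp(-C^{\ast}t/(1+\Delta_u))$. This avoids the somewhat opaque algebra in the paper's chain of equalities and, in the case of (\ref{fuckingbeautifultheoremeq3}), actually yields the sharper exponent $-\sqrt{2\varepsilon C^{\ast}t}=-2t^{1/2}(\varepsilon C^{\ast}/2)^{1/2}$, twice what is stated; the theorem follows a fortiori. The paper's indicator-splitting argument is more ad hoc, whereas your AM--GM step makes transparent exactly how the exponential-moment hypothesis is traded against the $\Delta_u$-dependent rate.
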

\begin{proof} Proof of (\ref{fuckingbeautifultheoremeq1}). \\
Firstly, note that $[0,\infty) \ni x \mapsto \log(x+1)$ is  obviously nonnegative, increasing and strictly positive on $(0,\infty)$. Consequently, one has by virtue of Markov's inequality and by recalling Lemma \ref{determinisiticfinallemma} as well as Theorem \ref{deterministicidentityprop} that 
\begin{align*}
\P \left( \int \limits_{S} (T(t)u-\overline{(u)})^{2}d\lambda > \alpha \right) \leq\frac{2}{\log(\alpha+1)} \int \limits_{\Omega} \Delta_{u}(\omega)^{\frac{1}{2}} \exp \left(\frac{-C^{\ast}_{S,m,p,g_{1}}t}{1+\Delta_{u}(\omega)}\right) d \P(\omega)
\end{align*}
which verifies (\ref{fuckingbeautifultheoremeq1}) by applying Cauchy-Schwarz' inequality. (Moreover, note that the assumption on $\Delta_u$ ensures that the first expectation exists and the the second one exists trivially.)\\

\noindent Throughout the remaining part of this proof, let $\tilde{\Delta}_{u}:=\frac{1}{1+\Delta_{u}}$.\\
Now inequality (\ref{fuckingbeautifultheoremeq2}) follows from the succeeding estimate, where relation (\ref{fuckingbeautifultheoremeq1}) is used.
\begin{eqnarray*}
	& & ~
t^{r}\P \left( \int \limits_{S} (T(t)u-\overline{(u)})^{2}d\lambda > \alpha \right) \\
& \leq & ~ t^{r}  \frac{2}{\log(\alpha+1)}~ \left(\E(\Delta_{u}) \E \left(\exp \left(\frac{-2tC^{\ast}_{S,m,p,g_{1}}}{1+\Delta_{u}}\right)\right) \right)^{\frac{1}{2}}\\ 
& = & ~   \frac{2(\E(\Delta_{u}))^{\frac{1}{2}}}{\log(\alpha+1)}~ \left(\E \left( (2r^{-1}\tilde{\Delta}_{u}C^{\ast}_{S,m,p,g_{1}})^{-2r}\exp \left( r \log \left((2r^{-1}\tilde{\Delta}_{u}C^{\ast}_{S,m,p,g_{1}}t)^{2}\right)-2t C^{\ast}_{S,m,p,g_{1}}\tilde{\Delta}_{u}\right)\right) \right)^{\frac{1}{2}} \\
& \leq & ~   \frac{2(\E(\Delta_{u}))^{\frac{1}{2}}}{\log(\alpha+1)}~ \left( \E \left( (2r^{-1}\tilde{\Delta}_{u}C^{\ast}_{S,m,p,g_{1}})^{-2r}\exp \left( r  2r^{-1}\tilde{\Delta}_{u}C^{\ast}_{S,m,p,g_{1}}t-2t C^{\ast}_{S,m,p,g_{1}}\tilde{\Delta}_{u}\right)\right)\right) ^{\frac{1}{2}} \\
& = & ~   \frac{2}{\log(\alpha+1)} \left(\frac{r}{2C^{\ast}_{S,m,p,g_{1}}}\right)^{r}~ \left(\E(\Delta_{u}) \E \left( (1+\Delta_{u})^{2r}\right) \right)^{\frac{1}{2}}.
\end{eqnarray*}
Finally, (\ref{fuckingbeautifultheoremeq3}) will be proven. For the sake of brevity, let $\beta:=\left(\frac{1}{2}\varepsilon C^{\ast}_{S,m,p,g_{1}}\right)^{\frac{1}{2}}$. The estimate follows from the succeeding calculation, where relation (\ref{fuckingbeautifultheoremeq1}) is used.
\begin{eqnarray*}
	& & ~
\exp\left(t^{\frac{1}{2}}\beta\right)\P \left( \int \limits_{S} (T(t)u-\overline{(u)})^{2}d\lambda > \alpha \right) \\
& \leq & \frac{2\left(\E(\Delta_{u})\right)^{\frac{1}{2}}}{\log(\alpha+1)} \left(\E \left(\exp \left(2t^{\frac{1}{2}}\left(\beta- t^{\frac{1}{2}} C^{\ast}_{S,m,p,g_{1}}\tilde{\Delta}_{u}\right)\id\{\beta>t^{\frac{1}{2}} C^{\ast}_{S,m,p,g_{1}}\tilde{\Delta}_{u}\}\right)\right) \right)^{\frac{1}{2}} \\
& \leq & \frac{2\left(\E(\Delta_{u})\right)^{\frac{1}{2}}}{\log(\alpha+1)} \left(\E \left(\exp \left(2t^{\frac{1}{2}}\beta\id\left\{t^{\frac{1}{2}}~< \frac{\beta}{C^{\ast}_{S,m,p,g_{1}}\tilde{\Delta}_{u}}\right\}\right)\right) \right)^{\frac{1}{2}} \\ 
& \leq & \frac{2\left(\E(\Delta_{u})\right)^{\frac{1}{2}}}{\log(\alpha+1)} \left(\E \left(\exp \left(2~\frac{\beta^{2}}{C^{\ast}_{S,m,p,g_{1}}}(1+\Delta_{u})\right)\right) \right)^{\frac{1}{2}} \\ 
& = & ~\frac{2}{\log(\alpha+1)}\exp\left(\frac{\varepsilon}{2}\right)\left(\E(\Delta_{u})\E \left(\exp \left(\varepsilon\Delta_{u}\right)\right) \right)^{\frac{1}{2}}
\end{eqnarray*}
\end{proof}

\begin{remark} If  $u:\Omega \rightarrow L^{1}(S)$ is Gaussian and $\P(u \in L^{2}(S))=1$, it is clear that there is an $\varepsilon >0$, such that $e^{\varepsilon\Delta_{u}} \in L^{1}(\Omega,\F,\P)$. Consequently, one can apply (\ref{fuckingbeautifultheoremeq3}) if $u$ is Gaussian and  $\P(u \in L^{2}(S))=1$. 
\end{remark}

\begin{remark} Note that if $n=2$, one can apply Theorem \ref{fuckingbeautifultheorem} for any $p \in (1,2)$. Moreover, one can apply (\ref{eqasunifbound}), if $p>2$. Consequently, if $n=2$, which is the interesting case from an applied point of view, one can apply one of these two results, given that the initial $u$ is sufficiently integrable.
\end{remark}

\renewcommand{\chaptername}{}
\renewcommand{\thechapter}{}
\chapter{Appendices}
\renewcommand{\thesection}{\Alph{section}}
\numberwithin{equation}{section}
\thispagestyle{empty}
\markboth{Kapitelname}{APPENDIX}

\section{Measurability questions concerning $A$}
\label{appendix_A}
The following two lemmas reveal that all events occurring in the definition of $A$ are indeed measurable and that all occurring integrals are well-defined as well as finite.

\begin{lemma}\label{lemmawelldefined1} The set $W^{1,p}(S)\cap L^{\infty}(S)$ is $\mathfrak{B}(L^{1}(S))$-measurable. Let $f \in L^{1}(\Omega;L^{1}(S))$ and assume $P(f \in W^{1,p}(S)\cap L^{\infty}(S))=1$. Then the following assertions hold.
	\begin{enumerate}
		\item $f$ is $\mathcal{F}$-$\mathfrak{B}(L^{p}(S))$-measurable.
		\item $\nabla f$ is $\mathcal{F}$-$\mathfrak{B}(L^{p}(S;\mathbb{R}^{n}))$-measurable.
		\item The mapping $\Phi: L^{p}(S;\mathbb{R}^{n})\rightarrow L^{\tp}(S;\mathbb{R}^{n})$ defined by $\Phi(\varphi):=|\varphi|^{p-2}\varphi$ for all $\varphi \in L^{p}(S;\mathbb{R}^{n})$ is continuous.
		\item The mapping defined by $\Omega \ni \omega \mapsto g(\omega)|\nabla f(\omega)|^{p-2}\nabla f(\omega) \in L^{\tp}(S;\mathbb{R}^{n})$ is $\mathcal{F}$-$\mathfrak{B}(L^{\tp}(S;\mathbb{R}^{n}))$-measurable.
	\end{enumerate}
\end{lemma}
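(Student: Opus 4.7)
The plan is to prove the preliminary Borel measurability of $W^{1,p}(S)\cap L^\infty(S)$ via a sub-level-set decomposition, and then deduce i)--iv) in order, leaning on the Pettis equivalence of weak and strong measurability (which applies because $L^p(S)$ and $L^{\tilde p}(S;\mathbb{R}^n)$ are separable) together with standard density and dominated-convergence arguments. First, writing
\[
W^{1,p}(S)\cap L^\infty(S)=\bigcup_{k\in\mathbb{N}}E_k,\qquad E_k:=\{f\in L^1(S):\|f\|_{L^\infty(S)}\le k,\ \|f\|_{W^{1,p}(S)}\le k\},
\]
I would check each $E_k$ is $L^1$-closed: an $L^1$-convergent sequence $(f_m)$ in $E_k$ has an a.e.\ converging subsequence, giving $\|f\|_{L^\infty}\le k$ in the limit, while reflexivity of $L^p$ supplies weak limits of $f_m$ and of $\nabla f_m$ which, by the distributional characterization of weak derivatives, must coincide with $f$ and $\nabla f$ and inherit the same bounds.

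For i), I would argue via weak measurability: for any $g\in L^{\tilde p}(S)=(L^p(S))^*$, approximate $g$ by $g_m\in L^\infty(S)$ in $L^{\tilde p}$ (possible since $S$ has finite measure); then $\omega\mapsto\int_S g_m f(\omega)\,d\lambda$ is $\mathcal{F}$-measurable because $g_m\in L^\infty(S)=(L^1(S))^*$, and a pointwise-a.s.\ limit gives $\mathcal{F}$-measurability of $\omega\mapsto\int_S g f(\omega)\,d\lambda$. Separability of $L^p(S)$ and the Pettis theorem then yield strong $\mathcal{F}$-$\mathfrak{B}(L^p(S))$-measurability of $f$. Part ii) follows the same template: for $\varphi\in C_c^\infty(\overline{S};\mathbb{R}^n)$, integration by parts gives $\int_S\nabla f(\omega)\cdot\varphi\,d\lambda=-\int_S f(\omega)\,\text{div}(\varphi)\,d\lambda$, which is $\mathcal{F}$-measurable since $\text{div}(\varphi)\in L^\infty(S)$; density of $C_c^\infty(\overline{S};\mathbb{R}^n)$ in $L^{\tilde p}(S;\mathbb{R}^n)=(L^p(S;\mathbb{R}^n))^*$ together with pointwise-a.s.\ limits extends this to all dual elements, and Pettis upgrades to $\mathcal{F}$-$\mathfrak{B}(L^p(S;\mathbb{R}^n))$-measurability of $\nabla f$.

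Part iii) is the standard continuity of the Nemytskii operator generated by the Carathéodory map $x\mapsto|x|^{p-2}x$. The identity $(p-1)\tilde p=p$ gives $\|\Phi(\varphi)\|_{L^{\tilde p}}^{\tilde p}=\|\varphi\|_{L^p}^p$, so $\Phi$ indeed maps $L^p(S;\mathbb{R}^n)$ into $L^{\tilde p}(S;\mathbb{R}^n)$; if $\varphi_m\to\varphi$ in $L^p$, passing to a subsequence with a.e.\ convergence dominated by some $h\in L^p$ yields a.e.\ convergence $\Phi(\varphi_m)\to\Phi(\varphi)$ with $|\Phi(\varphi_m)|^{\tilde p}\le|h|^p\in L^1$, so dominated convergence combined with the usual subsequence-of-subsequence trick delivers $L^{\tilde p}$ convergence of the original sequence.

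Finally, for iv), composing ii) with the continuous map from iii) shows that $\omega\mapsto|\nabla f(\omega)|^{p-2}\nabla f(\omega)$ is $\mathcal{F}$-$\mathfrak{B}(L^{\tilde p}(S;\mathbb{R}^n))$-measurable. The remaining obstacle, and what I expect to be the main technical point, is multiplication by $g(\omega)$: since $g$ is only measurable as an $L^1$-valued map and $L^\infty(S)$ is not separable, one cannot simply invoke measurability of $g$ into $L^\infty$. I would sidestep this by introducing
\[
\Psi:L^1_{g_1,g_2}(S)\times L^{\tilde p}(S;\mathbb{R}^n)\to L^{\tilde p}(S;\mathbb{R}^n),\quad (\gamma,h)\mapsto\gamma h,
\]
and proving its joint continuity via the splitting $\gamma_m h_m-\gamma h=\gamma_m(h_m-h)+(\gamma_m-\gamma)h$: the first term satisfies $\|\gamma_m(h_m-h)\|_{L^{\tilde p}}\le g_2\|h_m-h\|_{L^{\tilde p}}$, and the second is handled by extracting an a.e.\ convergent subsequence of $\gamma_m$ (from $L^1$-convergence), using the uniform bound $|\gamma_m-\gamma|\le 2g_2$, and applying dominated convergence with majorant $2g_2|h|\in L^{\tilde p}$, then promoting the subsequence result to the whole sequence. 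With $\Psi$ continuous, separability of $L^1(S)$ and $L^{\tilde p}(S;\mathbb{R}^n)$ ensures the Borel $\sigma$-algebra of the product equals the product of the Borel $\sigma$-algebras, so $(g,|\nabla f|^{p-2}\nabla f)$ is $\mathcal{F}$-measurable into the product, and composition with $\Psi$ yields iv).
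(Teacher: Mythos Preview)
Your proof is correct. The preliminary Borel-measurability argument and parts i)--iii) are essentially identical to the paper's proof, with only cosmetic differences (you bundle the $L^\infty$ and $W^{1,p}$ bounds into a single set $E_k$, whereas the paper treats $L^\infty(S)$ and $W^{1,p}(S)$ separately and then intersects; the underlying closedness arguments via a.e.\ convergence, weak compactness of gradients, and Fatou are the same).

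Part iv), however, is handled by a genuinely different route. The paper does not introduce your multiplication map $\Psi$; instead it first observes that $g$ is $\mathcal{F}$-$\mathfrak{B}(L^{p}(S))$-measurable (by the same device as in i)) and that $|\nabla f|^{p-2}\nabla f$ is $\mathcal{F}$-$\mathfrak{B}(L^{\tilde p}(S;\mathbb{R}^n))$-measurable (from ii) and iii)), so their product is $\mathcal{F}$-$\mathfrak{B}(L^{1}(S;\mathbb{R}^n))$-measurable by H\"older pairing; it then upgrades this $L^1$-valued measurability to $L^{\tilde p}$-valued measurability by the same approximation-of-dual-elements argument used in i), using that the product lies in $L^{\tilde p}$ almost surely since $g\in L^\infty(S)$ a.s. Your approach via joint continuity of $\Psi:L^{1}_{g_1,g_2}(S)\times L^{\tilde p}(S;\mathbb{R}^n)\to L^{\tilde p}(S;\mathbb{R}^n)$ together with the product-$\sigma$-algebra identity for separable factors is a clean one-shot alternative that avoids the two-step $L^1\to L^{\tilde p}$ upgrade; the paper's route, on the other hand, recycles the mechanism already built in i) and so requires no new continuity lemma. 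Both arguments are valid.
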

\begin{proof} At first it will be proven that  $L^{\infty}(S) \in \mathfrak{B}(L^{1}(S))$.\\
	Introduce $K(\kappa):=\{f \in L^{\infty}(S):~||f||_{L^{\infty}(S)}\leq \kappa \}$ and note that obviously $L^{\infty}(S)=  \bigcup \limits_{\kappa \in \mathbb{N}} K(\kappa)$ and that each of the $K(\kappa)$ is closed w.r.t. $||\cdot||_{L^{1}(S)}$. Consequently, $L^{\infty}(S)$ is the countable union of $L^{1}(S)$-closed sets and therefore $ L^{\infty}(S) \in \mathfrak{B}(L^{1}(S))$.\\
	Now it will be proven that $W^{1,p}(S) \in \mathfrak{B}(L^{1}(S))$. Introduce $K_{p}(\kappa):= \{ f \in W^{1,p}(S):~||f||_{W^{1,p}(S)} \leq \kappa \}$, then clearly $W^{1,p}(S)=\bigcup \limits_{\kappa \in \mathbb{N}}K_{p}(\kappa)$. Consequently, the claim follows if $K_{p}(\kappa)$ is $L^{1}(S)$-closed.\\
	Let $(f_{m})_{m \in \mathbb{N}} \subseteq K_{p}(\kappa)$ and $f \in L^{1}(S)$ be such that $\lim \limits_{m \rightarrow \infty}f_{m}=f$ in $L^{1}(S)$. Firstly, $||\nabla f_{m}||_{L^{p}(S;\mathbb{R}^{n})} \leq \kappa$ for all $m \in \mathbb{N}$, which yields, by passing to a subsequence if necessary, that there is an \linebreak$F:=(F_{1},...,F_{n}) \in L^{p}(S;\mathbb{R}^{n})$, with
	\begin{align*} 
	\wlim \limits_{m \rightarrow \infty} \nabla f_{m}=F \text{, in } L^{p}(S;\mathbb{R}^{n}).
	\end{align*}
	The latter, together with $\lim \limits_{m \rightarrow \infty}f_{m}=f$ in $L^{1}(S)$, yields that one has for all $\varphi \in C_{c}^{\infty}(S)$ that
	\begin{align*}
	\int \limits_{S} f \frac{\partial}{\partial x_{j}} \varphi d \lambda = \lim \limits_{m \rightarrow \infty} \int \limits_{S} f_{m} \frac{\partial}{\partial x_{j}} \varphi d \lambda = -\lim \limits_{m \rightarrow \infty} \int \limits_{S} \varphi \frac{\partial}{\partial x_{j}} f_{m} d \lambda = -\int \limits_{S} \varphi F_{j} d \lambda.
	\end{align*}
	Hence $f \in W^{1,1}_{\text{loc}}(S)$ and $\nabla f=F$. Consequently, one also has $\nabla f \in L^{p}(S;\mathbb{R}^{n})$. Moreover, one has (by passing to a subsequence if necessary) that $\lim \limits_{m \rightarrow \infty} f_{m}=f$ a.e. on $S$. It follows by virtue of Fatou's Lemma that
	\begin{align*} 
	\int \limits_{S} |f|^{p} d \lambda \leq \liminf_{m \rightarrow \infty} \int \limits_{S} |f_{m}|^{p}d \lambda \leq \kappa^{p}.
	\end{align*}
	Consequently, $f \in W^{1,p}(S)$.\\
	Finally, one has
	\begin{align*}
		||f||_{W^{1,p}(S)}^{p}\leq \liminf \limits_{m \rightarrow \infty} \left( \int \limits_{S} |f_{m}|^{p}d \lambda + \int \limits_{S} | \nabla f_{m}| ^{p} d \lambda \right) = \liminf \limits_{m \rightarrow \infty}||f_{m}||_{W^{1,p}(S)}^{p}= \kappa^{p}
	\end{align*}
	and consequently $f \in K_{p}(\kappa)$.\\
	As $L^{\infty}(S),~W^{1,p}(S) \in \mathfrak{B}(L^{1}(S))$, it is clear that $L^{\infty}(S) \cap W^{1,p}(S) \in \mathfrak{B}(L^{1}(S))$.\\  
	Proof of i). By assumption $f$ is $\mathcal{F}$-$\mathfrak{B}(L^{1}(S))$-measurable. Hence the mapping
	\begin{align*}
	\Omega \ni \omega \mapsto \int \limits_{S} f(\omega)\varphi d \lambda
	\end{align*}
	is $\mathcal{F}$-$\mathfrak{B}(\mathbb{R})$-measurable for any $\varphi \in L^{\infty}(S)$.\\
	Now let $\varphi \in L^{\tp}(S)$ and $(\varphi_{m})_{m \in \mathbb{N}} \subseteq L^{\infty}(S)$ such that $\lim \limits_{m \rightarrow \infty} \varphi_{m}=\varphi$ in $L^{\tp}(S)$. Then
	\begin{align*}
	\Omega \ni \omega \mapsto \int \limits_{S} f(\omega)\varphi d \lambda = \lim \limits_{m \rightarrow \infty} \int \limits_{S
	} f(\omega) \varphi_{m} d \lambda,
	\end{align*}
	for $\P$-a.e. $\omega \in \Omega$, since $\P(f \in L^{p}(S))=1$. Hence $\Omega \ni \omega \mapsto \int \limits_{S} f(\omega)\varphi d \lambda$ is, as it is the almost sure limit of $\F$-$\mathfrak{B}(\mathbb{R})$-measurable functions, $\mathcal{F}$-$\mathfrak{B}(\mathbb{R})$-measurable. Consequently, Pettis' measurability theorem yields i).\\
	Proof of ii). Firstly, note that $\P(f \in W^{1,p}(S))=1$, consequently $\nabla f(\omega)$ exists for  $\P$-a.e. $\omega \in \Omega$.\\
	For $\varphi=(\varphi_{1},...,\varphi_{n}) \in C_{c}^{\infty}(S;\mathbb{R}^{n})$, one has
	\begin{align*}
	\int \limits_{S}  \nabla f(\omega)\cdot \varphi d\lambda = - \int \limits_{S} f(\omega) \sum \limits_{j=1} \limits^{n} \frac{\partial}{\partial x_{j}} \varphi_{j} d \lambda
	\end{align*}
	for $\P$-a.e. $\omega \in \Omega$.\\
	Consequently, $\Omega \ni \omega \mapsto \int \limits_{S}  \nabla f(\omega)\cdot \varphi d\lambda$ is, for any $\varphi \in C_{c}^{\infty}(S;\mathbb{R}^{n})$, $\mathcal{F}$-$\mathfrak{B}(\mathbb{R})$-measurable because $f$ is $\F$-$\B(L^{1}(S))$-measurable and $\sum \limits_{j=1} \limits^{n} \frac{\partial}{\partial x_{j}} \varphi_{j} \in L^{\infty}(S)$.\\
	Now let $\varphi \in L^{\tp}(S;\mathbb{R}^{n})$ and let $(\varphi_{m})_{m \in \mathbb{N}} \subseteq  C_{c}^{\infty}(S;\mathbb{R}^{n})$ be such that $\lim \limits_{m \rightarrow \infty} \varphi_{m}=\varphi$ in $L^{\tp}(S;\mathbb{R}^{n})$. Then, as $\nabla f \in L^{p}(S;\mathbb{R}^{n})$ a.s., one obtains that 
	\begin{align*}
	\int \limits_{S}  \nabla f(\omega)\cdot \varphi d\lambda = \lim \limits_{m \rightarrow \infty} \int \limits_{S}  \nabla f(\omega)\cdot \varphi_{m} d\lambda \text{ for $\P$-a.e. } \omega \in \Omega.
	\end{align*}
	Consequently, $\Omega \ni \omega \mapsto \int \limits_{S}  \nabla f(\omega)\cdot \varphi d\lambda$ is, for any $\varphi \in L^{\tp}(S;\mathbb{R}^{n})$, the almost sure limit of measurable functions and hence itself $\F$-$\B(\mathbb{R})$-measurable. Conclusively, it follows by virtue of Pettis' measurability theorem that $\nabla f$ is $\F$-$\B(L^{p}(S;\mathbb{R}^{n}))$-measurable.\\  
	Proof of iii). Let $\varphi \in L^{p}(S;\mathbb{R}^{n})$ and let $(\varphi_{m})_{m \in \mathbb{N}} \subseteq L^{p}(S;\mathbb{R}^{n})$, such that $\lim \limits_{m \rightarrow \infty} \varphi_{m}=\varphi$ in $L^{p}(S;\mathbb{R}^{n})$. \\
	One has, by passing to a subsequence if necessary, that there is an $h \in L^{p}(S)$ such that
	$\lim \limits_{m \rightarrow \infty} \varphi_{m}=\varphi$ a.e. on $S$ and $| \varphi_{m}| \leq |h|$ a.e. on $S$ for each $m \in \mathbb{N}$. Moreover, the continuity of $\mathbb{R}^{n} \ni x \mapsto | x|^{p-2}x$ yields
	\begin{align} 
	\lim \limits_{m \rightarrow \infty} \Phi(\varphi_{m})=\Phi(\varphi)~\text{a.e. on } S.
	\end{align}
	In addition, 
	\begin{align*}
	| \Phi(\varphi_{m})-\Phi(\varphi)|^{\tp} \leq (| \varphi_{m}|^{p-1}+| \varphi|^{p-1})^{\tp} \leq 2^{\tp} |h|^{p} \in L^{1}(S),~\forall m \in \mathbb{N}.
	\end{align*}
	This yields, by virtue of dominated convergence, that $\lim \limits_{m \rightarrow \infty} \Phi(\varphi_{m})=\Phi(\varphi)$ in $L^{\tp}(S;\mathbb{R}^{n})$. \\ 
	Proof of iv). It is obvious that $\P( g \in L^{p}(S))=1$. In addition, $g$ is by assumption $\F$-$\mathfrak{B}(L^{1}(S))$-measurable. Consequently, one has by working as in i) that $g$ is $\F$-$\mathfrak{B}(L^{p}(S))$-measurable. Moreover, ii) and iii) yield that $\Omega \ni \omega \mapsto | \nabla f(\omega)|^{p-2}\nabla f(\omega)$ is $\F$-$L^{\tp}(S;\mathbb{R}^{n})$-measurable.\\ 
	Consequently, one infers that $g|\nabla f|^{p-2}\nabla f$ is $\F$-$\mathfrak{B}(L^{1}(S;\mathbb{R}^{n}))$-measurable.\\
	In addition, as $|\nabla f|^{p-2}\nabla f \in L^{\tp}(S;\mathbb{R}^{n})$ a.s. and particularly $g \in L^{\infty}(S)$ almost surely, one has that $g|\nabla f|^{p-2}\nabla f \in L^{\tp}(S;\mathbb{R}^{n})$ a.s. This, together with the $\F$-$\mathfrak{B}(L^{1}(S;\mathbb{R}^{n}))$-measurability of $g|\nabla f|^{p-2}\nabla f$ yields, by working similarly to i), that $g|\nabla f|^{p-2}\nabla f$ is $\F$-$\mathfrak{B}(L^{\tp}(S;\mathbb{R}^{n}))$-measurable.
\end{proof}

\begin{lemma}\label{lemmawelldefined2}  Let $f,~\hat{f} \in L^{1}(\Omega;L^{1}(S))$ and assume $P(f \in W^{1,p}(S)\cap L^{\infty}(S))=1$. Then the Lebesgue integrals 
	\begin{align}
	\label{asslemma1}
	\int \limits_{S}g(\omega)|\nabla f(\omega)|^{p-2} \nabla f(\omega)\cdot \nabla \varphi d\lambda\text{ and } \int \limits_{S}\hat{f}(\omega)\varphi d \lambda
	\end{align}
	exist for any given $\varphi \in W^{1,p}(S)\cap L^{\infty}(S)$ and a.e. $\omega \in \Omega$. Moreover, the mappings defined by
	\begin{align}
	\label{asslemma2}
	\Omega \ni \omega \mapsto \int \limits_{S}g(\omega)|\nabla f(\omega)|^{p-2} \nabla f(\omega)\cdot \nabla \varphi d\lambda\text{ and } \Omega \ni \omega \mapsto \int \limits_{S}\hat{f}(\omega)\varphi d \lambda
	\end{align}
	are $\mathcal{F}$-$\mathfrak{B}(\mathbb{R})$-measurable for any $\varphi \in W^{1,p}(S)\cap L^{\infty}(S)$.\\
	Finally, one has
	\begin{align*}
	\left\{ \omega \in \Omega:~ \int \limits_{S}g(\omega)|\nabla f(\omega)|^{p-2} \nabla f(\omega)\cdot \nabla \varphi d\lambda=\int \limits_{S}\hat{f}(\omega)\varphi d \lambda,~\forall \varphi \in W^{1,p}(S)\cap L^{\infty}(S)\right\} \in \mathcal{F},
	\end{align*}
	and $A$ is well-defined. 
\end{lemma}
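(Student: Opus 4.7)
For existence of the integrals in (\ref{asslemma1}), I would observe that on the full-probability event $\{f \in W^{1,p}(S) \cap L^{\infty}(S)\}$ one has $g(\omega) \in L^{\infty}(S)$ (since $g_{1} \leq g \leq g_{2}$), $|\nabla f(\omega)|^{p-2}\nabla f(\omega) \in L^{\tp}(S;\R^{n})$ by Lemma \ref{lemmawelldefined1}.iii) (using $(p-1)\tp = p$), and $\nabla \varphi \in L^{p}(S;\R^{n})$; H\"older's inequality then gives that the first integrand is in $L^{1}(S)$, and the second integrand is in $L^{1}(S)$ because $\hat f(\omega) \in L^{1}(S)$ and $\varphi \in L^{\infty}(S)$. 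For the measurability of the two maps in (\ref{asslemma2}), I would express each as the composition of a strongly measurable Banach-space-valued map with a continuous linear functional: Lemma \ref{lemmawelldefined1}.iv) supplies the $\mathcal{F}$-$\mathfrak{B}(L^{\tp}(S;\R^{n}))$-measurability of $\omega \mapsto g(\omega)|\nabla f(\omega)|^{p-2}\nabla f(\omega)$, to be paired with the continuous functional $F \mapsto \int_{S} F \cdot \nabla \varphi\, d\lambda$ on $L^{\tp}(S;\R^{n})$, while $\hat f$ is $\mathcal{F}$-$\mathfrak{B}(L^{1}(S))$-measurable by assumption, to be paired with the continuous functional $h \mapsto \int_{S} h \varphi\, d\lambda$ on $L^{1}(S)$.

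The main obstacle is the third claim, where the plan is to reduce the uncountable intersection over $\varphi \in W^{1,p}(S) \cap L^{\infty}(S)$ to a countable one by exploiting continuity of both sides in $\varphi$. Fix a countable sequence $\{\psi_{k}\}_{k \in \N} \subseteq C^{\infty}(\overline{S})$ which is dense in $W^{1,p}(S)$ and form the countable family $D := \{\tau_{n}(\psi_{k}) : n, k \in \N\} \subseteq W^{1,p}(S) \cap L^{\infty}(S)$, using that the standard truncation $\tau_{n}$ preserves $W^{1,p}(S)$ and produces bounded functions. Then
$$E_{0} := \bigcap_{\varphi \in D} \Big\{\omega \in \Omega : \int_{S} g(\omega)|\nabla f(\omega)|^{p-2}\nabla f(\omega) \cdot \nabla \varphi\, d\lambda = \int_{S} \hat{f}(\omega) \varphi\, d\lambda\Big\}$$
is a countable intersection of events measurable by the preceding step, so $E_{0} \in \mathcal{F}$.

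It remains to show that $E_{0}$ coincides with the event of interest up to a $\P$-null set. Given $\omega$ with $f(\omega) \in W^{1,p}(S) \cap L^{\infty}(S)$ and $\omega \in E_{0}$, and an arbitrary $\varphi \in W^{1,p}(S) \cap L^{\infty}(S)$, pick $N \in \N$ with $N \geq ||\varphi||_{L^{\infty}(S)}$ and $\psi_{k_{j}} \to \varphi$ in $W^{1,p}(S)$ with $\psi_{k_{j}} \to \varphi$ a.e.\ on $S$ (after passing to a subsequence). Since truncation is a contraction on $W^{1,p}(S)$ and $\tau_{N}(\varphi) = \varphi$, the test functions $\tau_{N}(\psi_{k_{j}}) \in D$ satisfy $\tau_{N}(\psi_{k_{j}}) \to \varphi$ in $W^{1,p}(S)$ and $\tau_{N}(\psi_{k_{j}}) \to \varphi$ a.e.\ on $S$ with $|\tau_{N}(\psi_{k_{j}})| \leq N$. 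Passing to the limit in the equation valid for $\tau_{N}(\psi_{k_{j}})$, the left-hand side converges by $L^{p}$-continuity and the right-hand side converges by dominated convergence (with dominating function $N|\hat f(\omega)| \in L^{1}(S)$), promoting the equality to $\varphi$. The reverse inclusion is trivial, so the event of interest agrees with $E_{0}$ on a set of full probability and hence lies in $\mathcal{F}$ by completeness of $(\Omega, \mathcal{F}, \P)$. Well-definedness of $A$ then follows, since all three defining conditions now describe $\mathcal{F}$-measurable events with well-defined integrals.
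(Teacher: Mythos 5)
Your treatment of the existence of the integrals and of the measurability of the two scalar maps is exactly the paper's (both reduce to Lemma \ref{lemmawelldefined1}.iv) plus H\"older and duality pairings), and your overall strategy for the third claim --- replace the uncountable intersection by a countable one over a family that is $W^{1,p}$-dense with uniform $L^{\infty}$ bounds, then pass to the limit using $L^{p}$-convergence of the gradients on the left and dominated convergence on the right --- is also the paper's. The difference is in how the countable family is built, and there your justification has a genuine flaw: the truncation $\tau_{N}$ is \emph{not} a contraction on $W^{1,p}(S)$. It is $1$-Lipschitz on $L^{p}(S)$, but $\nabla \tau_{N}(u)=\id_{\{|u|<N\}}\nabla u$, and the indicator depends on $u$ itself; two functions with identical gradients but different level sets $\{|u|<N\}$ show that $\|\nabla\tau_{N}(u)-\nabla\tau_{N}(v)\|_{L^{p}}$ need not be controlled by $\|u-v\|_{W^{1,p}}$. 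What you actually need is only that $\nabla\tau_{N}(\psi_{k_{j}})\rightarrow\nabla\varphi$ in $L^{p}(S;\R^{n})$ when $\psi_{k_{j}}\rightarrow\varphi$ in $W^{1,p}(S)$ and $\|\varphi\|_{L^{\infty}(S)}\leq N$; this is true (it is the continuity, not contractivity, of $\tau_{N}$ on $W^{1,p}(S)$), but it requires its own argument --- one uses that $\nabla\varphi=0$ a.e.\ on $\{|\varphi|=N\}$ together with a Vitali-type convergence theorem, since the a.e.\ convergent sequence $\id_{\{|\psi_{k_{j}}|<N\}}\nabla\psi_{k_{j}}$ is dominated by the $L^{p}$-convergent sequence $|\nabla\psi_{k_{j}}|$ rather than by a fixed $L^{p}$ function. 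So your proof is repairable but not complete as written.

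The paper avoids this issue entirely by a slightly different construction: it sets $L^{\infty}_{k}(S):=\{\varphi:\|\varphi\|_{L^{\infty}(S)}\leq k\}$, observes that $W^{1,p}(S)\cap L^{\infty}_{k}(S)$ is a closed (hence separable) subset of $W^{1,p}(S)$, and picks a countable $\|\cdot\|_{W^{1,p}(S)}$-dense subset $\mathcal{D}(k)$ of that layer directly. A given $\varphi\in W^{1,p}(S)\cap L^{\infty}_{k}(S)$ is then approximated by a sequence that lies in the same layer, so the uniform bound $k$ for the dominated-convergence step and the $W^{1,p}$-convergence for the gradient step come for free, with no truncation operator involved. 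If you want to keep your construction, supply the continuity of $\tau_{N}$ on $W^{1,p}(S)$ as a separate lemma; otherwise, switching to countable dense subsets of the layers $W^{1,p}(S)\cap L^{\infty}_{k}(S)$ closes the gap with less machinery.
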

\begin{proof} Firstly, note that the assertions concerning $\hat{f}$ stated in (\ref{asslemma1}) and (\ref{asslemma2}) are trivial.\\ 
	Moreover, for $\varphi \in W^{1,p}(S)\cap L^{\infty}(S)$, one has a fortiori $\nabla \varphi \in L^{p}(S;\mathbb{R}^{n})$ which yields, by virtue of Lemma \ref{lemmawelldefined1}.iv), that the left-hand-side integral in (\ref{asslemma1}) exists with probability one and also that the left-hand-side mapping in (\ref{asslemma2}) is $\F$-$\B(\mathbb{R})$-measurable.\\
	Now the final assertion in this lemma will be proven. Firstly, note that
	\begin{align}
	\label{assumplemma7}
	\left\{ \omega \in \Omega:~ \int \limits_{S}g(\omega)|\nabla f(\omega)|^{p-2} \nabla f(\omega)\cdot \nabla \varphi d\lambda=\int \limits_{S}\hat{f}(\omega)\varphi d \lambda\right\} \in \mathcal{F}
	\end{align}
	for any given $\varphi \in W^{1,p}(S) \cap L^{\infty}(S)$.\\
	Introduce $L^{\infty}_{k}(S):=\{ f \in L^{\infty}(S):~||f||_{L^{\infty}(S)} \leq k \}$ for every $k \in \mathbb{N}$.
	One verifies immediately that $W^{1,p}(S) \cap L^{\infty}_{k}(S)$ is a closed subset of $W^{1,p}(S)$ w.r.t. $||\cdot||_{W^{1,p}(S)}$ . Moreover, it is well known that $(W^{1,p}(S),||\cdot||_{W^{1,p}(S)})$ is separable and that subsets of separable spaces are separable as well. Consequently, for each $k \in \mathbb{N}$ there is a countable set $\mathcal{D}(k) \subseteq W^{1,p}(S) \cap L^{\infty}_{k}(S)$ fulfilling
	\begin{align*}
	\overline{\mathcal{D}(k)}=  W^{1,p}(S) \cap L^{\infty}_{k}(S),
	\end{align*}
	where the closure is taken w.r.t. $||\cdot||_{W^{1,p}(S)}$.\\ 
	Now introduce $\gamma \in L^{1}_{g_{1},g_{2}}(S)$ and $(F,\hat{F}) \in (W^{1,p}(S)\cap L^{\infty}(S)) \times L^{1}(S)$. It will be proven that one has, for a given $k \in \mathbb{N}$,
	\begin{align}
	\label{assumplemma4}
	\int \limits_{S}\gamma| \nabla F|^{p-2}\nabla F \cdot \nabla \varphi d\lambda=\int \limits_{S}\hat{F}\varphi d \lambda,~\forall \varphi \in W^{1,p}(S) \cap L^{\infty}_{k}(S),
	\end{align}
	if and only if
	\begin{align}
	\label{assumplemma5}
	\int \limits_{S}\gamma| \nabla F|^{p-2} \nabla F\cdot \nabla \varphi d\lambda=\int \limits_{S}\hat{F}\varphi d \lambda,~\forall \varphi \in \mathcal{D}(k).
	\end{align}
	Firstly, note that (\ref{assumplemma4}) obviously implies (\ref{assumplemma5}).\\
	Now assume that (\ref{assumplemma5}) holds. Let $\varphi \in W^{1,p}(S) \cap L^{\infty}_{k}(S)$ be arbitrary but fixed and introduce\linebreak$(\varphi_{m})_{m \in \mathbb{N}} \subseteq \mathcal{D}(k)$ such that $\lim \limits_{m \rightarrow \infty} \varphi_{m}=\varphi$ in $W^{1,p}(S)$.\\
	As $\gamma|\nabla F|^{p-2}\nabla F \in L^{\tp}(S;\mathbb{R}^{n})$ and as particularly $\lim \limits_{m \rightarrow \infty} \nabla \varphi_{m}=\nabla \varphi$ in $L^{p}(S;\mathbb{R}^{n})$ one obtains that
	\begin{align*}
	\lim \limits_{m \rightarrow \infty} \int \limits_{S}\gamma|\nabla F|^{p-2} \nabla F\cdot \nabla \varphi_{m}d\lambda = \int \limits_{S} \gamma|\nabla F|^{p-2} \nabla F\cdot \nabla \varphi d\lambda.
	\end{align*}
	Moreover, as particularly  $\lim \limits_{m \rightarrow \infty} \varphi_{m}=\varphi$ in $L^{p}(S)$ one obtains, by passing to a subsequence if necessary, that $\lim \limits_{m \rightarrow \infty} (\varphi_{m}-\varphi)\hat{F}=0$ a.e. on $S$. Since clearly $|(\varphi_{m}-\varphi)\hat{F}| \leq 2k |\hat{F}|\in L^{1}(S)$ one obtains by virtue of dominated convergence that
	\begin{align*}
	\lim \limits_{m \rightarrow \infty} \int \limits_{S}\hat{F}\varphi_{m} d \lambda = \int \limits_{S}\hat{F}\varphi d \lambda.
	\end{align*}
	This yields that (\ref{assumplemma5}) implies (\ref{assumplemma4}).\\
	
	Finally, one obtains by using $W^{1,p}(S)\cap L^{\infty}(S) = \bigcup \limits_{k \in \mathbb{N}}(W^{1,p}(S)\cap L^{\infty}_{k}(S))$ and the equivalence of (\ref{assumplemma4}) and (\ref{assumplemma5}) that
	\begin{eqnarray*}
		& & ~
		\left\{ \omega \in \Omega:~ \int \limits_{S}g(\omega)| \nabla f(\omega)|^{p-2} \nabla f(\omega)\cdot \nabla \varphi d\lambda=\int \limits_{S}\hat{f}(\omega)\varphi d \lambda,~\forall \varphi \in W^{1,p}(S)\cap L^{\infty}(S)\right\} \\
		& = &  ~  \bigcap \limits_{ k \in \mathbb{N}} \left\{ \omega \in \Omega:~ \int \limits_{S}g(\omega)|\nabla f(\omega)|^{p-2}\nabla f(\omega)\cdot \nabla \varphi d\lambda=\int \limits_{S}\hat{f}(\omega)\varphi d \lambda,~\forall \varphi \in \mathcal{D}(k)\right\} \\
		& = &  ~  \bigcap \limits_{ k \in \mathbb{N}} ~ \bigcap \limits_{ \varphi \in \mathcal{D}(k)} \left\{ \omega \in \Omega:~ \int \limits_{S}g(\omega)|\nabla f(\omega)|^{p-2} \nabla f(\omega)\cdot \nabla \varphi d\lambda=\int \limits_{S}\hat{f}(\omega)\varphi d \lambda \right\},
	\end{eqnarray*}
	which implies, using (\ref{assumplemma7}), the claim as $\mathcal{D}(k)$ is countable for each $k \in \mathbb{N}$. 
\end{proof} 

\begin{remark} Let $q \in (1,\infty)$. Then one has $L^{q}(S) \in \mathfrak{B}(L^{1}(S))$. (This works precisely as the proof of $L^{\infty}(S) \in \mathfrak{B}(L^{1}(S))$, see Lemma  \ref{lemmawelldefined1}.)\\ 
	Now let $f \in L^{1}(\Omega;L^{1}(S))$ and assume $\P( f \in L^{q}(S))=1$. Then one has, by working as in the proof of Lemma \ref{lemmawelldefined1} that $f$ is $\F-\mathfrak{B}(L^{q}(S))$-measurable. Consequently, as the mapping $L^{q}(S) \ni h \mapsto ||h||_{L^{q}(S)}$ is continuous (and hence $\mathfrak{B}(L^{q}(S))-\mathfrak{B}(\mathbb{R})$-measurable), one has that the mapping defined by \linebreak$\Omega \ni \omega \mapsto ||f(\omega)||_{L^{q}(S)}$ is $\F-\mathfrak{B}(\mathbb{R})$-measurable.
	This holds, as the following lemma reveals, also for $q=\infty$. 
\end{remark}

\begin{lemma} Let $f \in L^{1}(\Omega;L^{1}(S))$ and assume $\P(f \in L^{\infty}(S))=1$. Then the mapping defined by $\Omega \ni \omega \mapsto ||f(\omega)||_{L^{\infty}(S)}$ is $\F-\mathfrak{B}(\mathbb{R})$-measurable.
\end{lemma}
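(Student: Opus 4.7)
The plan is to exploit the fact that on a finite-measure set the $L^\infty$-norm is recovered as the limit of the $L^q$-norms as $q \to \infty$, and combine this with the preceding remark, which already gives measurability of $\omega \mapsto \|f(\omega)\|_{L^q(S)}$ for finite $q$.

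First I would note that since $S$ is bounded, $\lambda(S) < \infty$, and since $\P(f \in L^\infty(S)) = 1$ we also have $\P(f \in L^q(S)) = 1$ for every $q \in [1,\infty)$, because $\|f(\omega)\|_{L^q(S)} \leq \|f(\omega)\|_{L^\infty(S)}\,\lambda(S)^{1/q}$ whenever $f(\omega) \in L^\infty(S)$. Therefore the preceding remark applies, so for every $q \in (1,\infty)$ the map $\varphi_q : \Omega \to \mathbb{R}$ defined by $\varphi_q(\omega) := \|f(\omega)\|_{L^q(S)}$ is $\mathcal{F}$-$\mathfrak{B}(\mathbb{R})$-measurable.

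Next I would recall the classical fact that for any $h \in L^\infty(S)$ on the finite-measure set $S$, one has
\begin{align*}
\lim_{q \to \infty} \|h\|_{L^q(S)} = \|h\|_{L^\infty(S)}.
\end{align*}
The upper bound $\|h\|_{L^q(S)} \leq \|h\|_{L^\infty(S)} \lambda(S)^{1/q}$ gives $\limsup_{q \to \infty}\|h\|_{L^q(S)} \leq \|h\|_{L^\infty(S)}$; for the reverse inequality one considers, for arbitrary $c < \|h\|_{L^\infty(S)}$, the set $E_c := \{|h| > c\}$, which has positive measure, and observes that $\|h\|_{L^q(S)} \geq c\, \lambda(E_c)^{1/q} \to c$. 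Applying this pointwise in $\omega$, we obtain
\begin{align*}
\lim_{k \to \infty} \varphi_k(\omega) = \|f(\omega)\|_{L^\infty(S)}, \quad \text{for $\P$-a.e. } \omega \in \Omega,
\end{align*}
where the limit is taken along $k \in \mathbb{N}$.

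Finally, each $\varphi_k$ is $\mathcal{F}$-$\mathfrak{B}(\mathbb{R})$-measurable by the previous step, so the pointwise (a.s.) limit $\omega \mapsto \|f(\omega)\|_{L^\infty(S)}$ is also $\mathcal{F}$-$\mathfrak{B}(\mathbb{R})$-measurable by completeness of the probability space $(\Omega, \mathcal{F}, \P)$, which allows one to redefine the limit on the $\P$-nullset where convergence fails without destroying measurability. I do not expect any serious obstacle; the only subtle points are ensuring $\P(f \in L^q(S)) = 1$ (which uses boundedness of $S$) and invoking completeness of the probability space to pass from almost-sure to everywhere measurability.
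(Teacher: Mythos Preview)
Your proof is correct and follows essentially the same approach as the paper: both arguments use the preceding remark to obtain measurability of $\omega \mapsto \|f(\omega)\|_{L^{m}(S)}$ for finite $m$, then invoke the classical fact $\|f(\omega)\|_{L^{\infty}(S)} = \lim_{m\to\infty}\|f(\omega)\|_{L^{m}(S)}$ for $\P$-a.e.\ $\omega$ and conclude by taking the almost-sure limit of measurable functions. Your version is slightly more detailed in that you justify the limit identity and explicitly invoke completeness of $(\Omega,\mathcal{F},\P)$.
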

\begin{proof} As $\P(f \in L^{\infty}(S))=1$ one has particularly $P(f \in L^{m}(S),~\forall m \in \mathbb{N})=1$. Consequently $f$ is $\F-\mathfrak{B}(L^{m}(S))$ measurable for any $m \in \mathbb{N}$. This yields that $\Omega \ni \omega \mapsto ||f(\omega)||_{L^{m}(S)}$ is $\F-\mathfrak{B}(\mathbb{R})$-measurable. Moreover, one has for $\P$-a.e. $\omega \in \Omega$ that
	\begin{align*}
	||f(\omega)||_{L^{\infty}(S)}= \lim \limits_{m \rightarrow \infty} ||f(\omega)||_{L^{m}(S)}.
	\end{align*}
	Consequently, $\Omega \ni \omega \mapsto ||f(\omega)||_{L^{\infty}(S)}$ is the almost sure limit of $\F-\mathfrak{B}(\mathbb{R})$-measurable functions and therefore itself $\F-\mathfrak{B}(\mathbb{R})$-measurable. 
\end{proof}

\section{Technical results necessary to prove the existence and uniqueness of mild solutions}
\label{appendix_b}

\begin{lemma}\label{aprop} Let $\gamma \in L^{1}_{g_{1},g_{2}}(S)$. The following assertions hold.
	\begin{enumerate}
		\item $a(\gamma)$, and consequently also $\mathfrak{a}(\gamma)$, is completely accretive.
		\item $L^{\infty}(S)\subseteq R(Id+a(\gamma))$ and consequently $R(Id+\mathfrak{a}(\gamma))=L^{1}(S)$. Hence, $\mathfrak{a}(\gamma)$ is m-accretive.
		\item $(Id+a(\gamma))^{-1}|_{L^{\infty}(S)}=(Id+\mathfrak{a}(\gamma))^{-1}|_{L^{\infty}(S)}$.
		\item  $(Id+\mathfrak{a}(\gamma))^{-1}$ is $L^{1}(S)$-continuous.
	\end{enumerate} 
\end{lemma}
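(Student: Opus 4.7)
The plan is to establish each of the four assertions in turn, relying heavily on the standard machinery for nonlinear operators associated to weighted $p$-Laplacians; most of the arguments are direct adaptations of the unweighted arguments in \cite{main}, with the weight $\gamma$ only entering multiplicatively in the vector field $\gamma|\nabla f|^{p-2}\nabla f$, and with the two-sided bound $g_{1}\le\gamma\le g_{2}$ preventing the weight from degenerating.

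For (i) the key fact is the standard characterization of complete accretivity: $a(\gamma)$ is completely accretive iff $\int_{S}(\hat{f}_{1}-\hat{f}_{2})\,q(f_{1}-f_{2})\,d\lambda \ge 0$ for all $(f_{j},\hat{f}_{j})\in a(\gamma)$ and all nondecreasing $q\in C^{1}(\mathbb{R})$ with $q(0)=0$ and $q$ bounded. Given such $q$, I would test the defining identities of $a(\gamma)$ with $\varphi:=q(f_{1}-f_{2})\in W^{1,p}(S)\cap L^{\infty}(S)$, so that $\nabla\varphi=q'(f_{1}-f_{2})\nabla(f_{1}-f_{2})$. Subtracting yields
\begin{equation*}
\int_{S}(\hat{f}_{1}-\hat{f}_{2})q(f_{1}-f_{2})\,d\lambda \;=\;\int_{S}\gamma\bigl(|\nabla f_{1}|^{p-2}\nabla f_{1}-|\nabla f_{2}|^{p-2}\nabla f_{2}\bigr)\cdot\nabla(f_{1}-f_{2})\,q'(f_{1}-f_{2})\,d\lambda,
\end{equation*}
which is nonnegative by monotonicity of $x\mapsto|x|^{p-2}x$, $q'\ge 0$ and $\gamma\ge g_{1}>0$. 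Complete accretivity of $\mathfrak{a}(\gamma)$ is inherited from that of $a(\gamma)$ by passing to the limit in its defining closure sequences (complete accretivity is preserved under closure in $L^{1}(S)^{2}$).

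For (ii), which I expect to be the main technical obstacle, let $h\in L^{\infty}(S)$ and consider the functional
\begin{equation*}
J(f):=\tfrac{1}{p}\int_{S}\gamma|\nabla f|^{p}\,d\lambda+\tfrac{1}{2}\int_{S}f^{2}\,d\lambda-\int_{S}hf\,d\lambda
\end{equation*}
on $W^{1,p}(S)$. Because $\gamma\ge g_{1}$ and $p>1$, $J$ is strictly convex, coercive on $W^{1,p}(S)$ and weakly lower semicontinuous, so the direct method of the calculus of variations yields a unique minimizer $f$. Its Euler--Lagrange equation is exactly the identity that says $(f,h-f)\in a(\gamma)$, provided one first verifies the needed $L^{\infty}$-bound so that $f\in L^{\infty}(S)$. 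That bound is obtained by truncation: testing the weak formulation with $\varphi=(f-\|h\|_{\infty})^{+}$, respectively $\varphi=(f+\|h\|_{\infty})^{-}$, and using that the left-hand side is nonnegative while the right-hand side is nonpositive forces $\|f\|_{\infty}\le\|h\|_{\infty}$. Thus $L^{\infty}(S)\subseteq R(Id+a(\gamma))\subseteq R(Id+\mathfrak{a}(\gamma))$; since $\mathfrak{a}(\gamma)$ is accretive (by (i)) and closed by definition, $R(Id+\mathfrak{a}(\gamma))$ is closed in $L^{1}(S)$ (cf.\ \cite[Prop.~2.18]{BenilanBook}), so density of $L^{\infty}(S)$ in $L^{1}(S)$ promotes the inclusion to an equality, which is m-accretivity.

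For (iii) the inclusion $\supseteq$ in the sense of graphs is immediate since $a(\gamma)\subseteq\mathfrak{a}(\gamma)$; so given $h\in L^{\infty}(S)$ and $f:=(Id+\mathfrak{a}(\gamma))^{-1}h$, I will argue $f\in D(a(\gamma))$ with $h-f=a(\gamma)f$. Taking the pair $(0,0)\in a(\gamma)\subseteq\mathfrak{a}(\gamma)$ in (i) (extended to $\mathfrak{a}(\gamma)$) gives $f<<h$ in the sense of Remark \ref{remarkca}, hence $\|f\|_{L^{\infty}(S)}\le\|h\|_{L^{\infty}(S)}<\infty$, so $f\in L^{\infty}(S)$; now the cited \cite[Lemma 3.1]{ich} allows one to conclude $(f,h-f)\in a(\gamma)$ from $(f,h-f)\in\mathfrak{a}(\gamma)$ together with $f\in L^{\infty}(S)$. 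Finally, (iv) is a cheap consequence of the accretivity of $\mathfrak{a}(\gamma)$: for any $h_{1},h_{2}\in L^{1}(S)=R(Id+\mathfrak{a}(\gamma))$, setting $f_{j}:=(Id+\mathfrak{a}(\gamma))^{-1}h_{j}$ and applying the accretivity inequality with $\alpha=1$ gives $\|f_{1}-f_{2}\|_{L^{1}(S)}\le\|h_{1}-h_{2}\|_{L^{1}(S)}$, so the resolvent is $1$-Lipschitz, and in particular continuous.
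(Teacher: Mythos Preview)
Your proposal is correct and, for parts (iii) and (iv), follows essentially the same route as the paper: complete accretivity forces the $\mathfrak{a}(\gamma)$-resolvent of $h\in L^{\infty}(S)$ into $L^{\infty}(S)$, whence \cite[Lemma~3.1]{ich} pulls it back into $a(\gamma)$; and the resolvent of an accretive operator is nonexpansive.

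The difference is in how much you do by hand. The paper's proof of this lemma is almost entirely by citation: it invokes \cite[Prop.~3.5]{main} for both the complete accretivity of $a(\gamma)$ and the range condition $L^{\infty}(S)\subseteq R(Id+a(\gamma))$, then \cite[Cor.~2.7]{cao} and \cite[Prop.~2.18]{BenilanBook} for the passage to the closure. You instead write out the complete-accretivity computation (testing with $q(f_{1}-f_{2})$ and using the monotonicity of $x\mapsto|x|^{p-2}x$), which is exactly what the cited proof does. For (ii) your route genuinely differs: you obtain the solution of $f+a(\gamma)f=h$ as the minimizer of the convex functional $J(f)=\tfrac{1}{p}\int_{S}\gamma|\nabla f|^{p}+\tfrac{1}{2}\int_{S}f^{2}-\int_{S}hf$ via the direct method, whereas \cite[Prop.~3.5]{main} (as the paper remarks just before Lemma~\ref{fancylemma}) proceeds by an approximation argument, showing that a sequence of resolvents converges to the desired solution. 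Your variational approach is shorter and more self-contained here; the approximation approach has the advantage that its machinery is reused verbatim in the proof of Lemma~\ref{fancylemma}, which is why the paper emphasizes it. One small point worth making explicit in your write-up: for small $p$ the functional $J$ may be $+\infty$ on part of $W^{1,p}(S)$, so one should either minimize over $W^{1,p}(S)\cap L^{2}(S)$ or note that the truncation test functions $(f\mp\|h\|_{\infty})^{\pm}$ lie in $W^{1,p}(S)\cap L^{2}(S)$ (being dominated by $|f|\in L^{2}$), which is the class of variations for which the Euler--Lagrange identity is guaranteed. Alternatively, the $L^{\infty}$-bound can be read off directly at the level of the functional by comparing $J(f)$ with $J(\tau_{\|h\|_{\infty}}(f))$, avoiding the Euler--Lagrange step altogether.
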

\begin{proof} Proof of i). See \cite[Prop. 3.5]{main}, for the fact that $a(\gamma)$ is completely accretive. Consequently, \cite[Corollary. 2.7]{cao} implies that $\mathfrak{a}(\gamma)$ is also completely accretive.\\
	Proof of ii). See \cite[Prop. 3.5]{main} for $L^{\infty}(S)\subseteq R(Id+a(\gamma))$. Consequently, \cite[Prop. 2.18.ii)]{BenilanBook} , implies iii), because $L^{\infty}(S)$ is well known to be dense in $L^{1}(S)$.\\
	Proof of iii). Let $h \in L^{\infty}(S)$ and let $(f,\hat{f}) \in a(\gamma)$, resp. $(F,\hat{F}) \in \mathfrak{a}(\gamma)$, be the uniquely determine functions fulfilling $h=f+\hat{f}$, resp. $h=F+\hat{F}$, i.e. $f=(Id+a(\gamma))^{-1}h$, resp. $F=(Id+\mathfrak{a}(\gamma))^{-1}h$.\\
	The complete accretivity of $\mathfrak{a}(\gamma)$ yields $F<<F+\hat{F}$ and consequently $F<<h$. This implies $F \in L^{\infty}(S)$ since $h \in L^{\infty}(S)$. Hence, it follows by virtue of \cite[Lemma 3.1]{ich} that $(F,\hat{F}) \in a(\gamma)$. Conclusively, one has, by uniqueness, that $f=F$.\\ 
	Proof of iv). Resolvents are not only continuous, but even nonexpansive, cf. \cite[Prop. 4.3]{BenilanBook}.
\end{proof}

\begin{lemma}\label{fancylemma} Let $(\gamma_{m})_{m \in \mathbb{N}} \subseteq L^{1}_{g_{1},g_{2}}(S)$ and assume that there is $\gamma \in L^{1}(S)$ such that
	\begin{align*}
	\lim \limits_{m \rightarrow \infty} \gamma_{m}=\gamma,\text{ in } L^{1}(S).
	\end{align*}
	Then
	\begin{align*}
	\wlim \limits_{m \rightarrow \infty} (Id+a(\gamma_{m}))^{-1}h=(Id+a(\gamma))h,~\text{in } L^{1}(S),
	\end{align*}
	for any $h \in L^{\infty}(S)$.
\end{lemma}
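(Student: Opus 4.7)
The plan is to produce uniform $W^{1,p}(S)$-bounds on $f_{m} := (Id+a(\gamma_{m}))^{-1}h$ via complete accretivity and the uniform ellipticity $g_{1} \leq \gamma_{m} \leq g_{2}$, to extract a weakly convergent subsequence, and then to identify the limit by a Minty-type monotonicity argument.

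First, set $\hat{f}_{m} := h - f_{m}$, so that $(f_{m},\hat{f}_{m}) \in a(\gamma_{m})$ with $f_{m} \in W^{1,p}(S)\cap L^{\infty}(S)$. Complete accretivity of $a(\gamma_{m})$ (Lemma \ref{aprop}.i)) together with $(0,0) \in a(\gamma_{m})$ gives $f_{m} << h$, hence $\|f_{m}\|_{L^{\infty}(S)} \leq \|h\|_{L^{\infty}(S)}$ and consequently $\|\hat{f}_{m}\|_{L^{\infty}(S)} \leq 2\|h\|_{L^{\infty}(S)}$. Testing the weak formulation with $\varphi = f_{m}$ and using $\gamma_{m}\geq g_{1}$ a.e.\ yields
\begin{equation*}
g_{1}\int_{S}|\nabla f_{m}|^{p}\,d\lambda \leq \int_{S}\gamma_{m}|\nabla f_{m}|^{p}\,d\lambda = \int_{S}\hat{f}_{m}f_{m}\,d\lambda \leq 2\|h\|_{L^{\infty}(S)}^{2}\lambda(S),
\end{equation*}
so $(f_{m})$ is bounded in $W^{1,p}(S)$. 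By reflexivity and the Rellich--Kondrachov theorem, passing to a subsequence (not relabelled), $f_{m}\rightharpoonup\tilde{f}$ in $W^{1,p}(S)$, strongly in $L^{p}(S)$, and pointwise a.e., so $\tilde{f} \in W^{1,p}(S)\cap L^{\infty}(S)$ with $\|\tilde{f}\|_{L^{\infty}(S)}\leq \|h\|_{L^{\infty}(S)}$. Since $g_{1}\leq\gamma_{m}\leq g_{2}$ and $\gamma_{m}\to\gamma$ in $L^{1}(S)$, a further subsequence satisfies $\gamma_{m}\to\gamma$ a.e., and dominated convergence promotes this to $\gamma_{m}\to\gamma$ in every $L^{q}(S)$ with $q\in[1,\infty)$; in particular $\gamma\in L^{1}_{g_{1},g_{2}}(S)$.

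To identify $\tilde{f}$, fix $\varphi\in W^{1,p}(S)\cap L^{\infty}(S)$; monotonicity of $x\mapsto|x|^{p-2}x$ together with $\gamma_{m}\geq 0$ yields
\begin{equation*}
0 \leq \int_{S}\gamma_{m}\bigl(|\nabla f_{m}|^{p-2}\nabla f_{m} - |\nabla\varphi|^{p-2}\nabla\varphi\bigr)\cdot(\nabla f_{m}-\nabla\varphi)\,d\lambda.
\end{equation*}
Using the weak formulation with test functions $f_{m}$ and $\varphi$ to replace the two pairings containing $\nabla f_{m}$, this rewrites as
\begin{equation*}
\int_{S}(h-f_{m})(f_{m}-\varphi)\,d\lambda \geq \int_{S}\gamma_{m}|\nabla\varphi|^{p-2}\nabla\varphi\cdot(\nabla f_{m}-\nabla\varphi)\,d\lambda.
\end{equation*}
The left-hand side converges to $\int_{S}(h-\tilde{f})(\tilde{f}-\varphi)\,d\lambda$ by the strong $L^{p}$-convergence of $f_{m}$; for the right-hand side, $\gamma_{m}|\nabla\varphi|^{p-2}\nabla\varphi \to \gamma|\nabla\varphi|^{p-2}\nabla\varphi$ strongly in $L^{\tilde{p}}(S;\mathbb{R}^{n})$ (by Lemma \ref{lemmawelldefined1}.iii) and dominated convergence using the uniform bound $\gamma_{m}\leq g_{2}$), which paired with the weakly $L^{p}$-convergent $\nabla f_{m}-\nabla\varphi$ gives the limit $\int_{S}\gamma|\nabla\varphi|^{p-2}\nabla\varphi\cdot(\nabla\tilde{f}-\nabla\varphi)\,d\lambda$. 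Specializing $\varphi = \tilde{f}\pm t\psi$ with $\psi\in W^{1,p}(S)\cap L^{\infty}(S)$, dividing by $\pm t>0$, and sending $t\searrow 0$ (again via Lemma \ref{lemmawelldefined1}.iii)) gives both inequalities in
\begin{equation*}
\int_{S}\gamma|\nabla\tilde{f}|^{p-2}\nabla\tilde{f}\cdot\nabla\psi\,d\lambda = \int_{S}(h-\tilde{f})\psi\,d\lambda,
\end{equation*}
so $(\tilde{f},h-\tilde{f})\in a(\gamma)$ and therefore $\tilde{f}=(Id+a(\gamma))^{-1}h$. Uniqueness of the limit promotes subsequential to full convergence: every subsequence of $((Id+a(\gamma_{m}))^{-1}h)_{m}$ admits a further subsequence converging strongly in $L^{p}(S)$ to $(Id+a(\gamma))^{-1}h$, so the entire sequence does, and a fortiori converges weakly in $L^{1}(S)$.

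The main obstacle is the monotonicity step: because $x\mapsto|x|^{p-2}x$ is nonlinear, weak convergence of $\nabla f_{m}$ does not by itself identify the weak limit of $\gamma_{m}|\nabla f_{m}|^{p-2}\nabla f_{m}$, and the added twist that the weight $\gamma_{m}$ also varies with $m$ must be absorbed via its strong $L^{q}$-convergence for every finite $q$, made possible by the uniform ellipticity $g_{1}\leq\gamma_{m}\leq g_{2}$.
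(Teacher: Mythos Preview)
Your argument is correct and is in fact somewhat cleaner than the paper's. Both proofs obtain the uniform $W^{1,p}(S)$-bound on $f_{m}$ from complete accretivity and the test $\varphi=f_{m}$, and both conclude by a Minty-type identification. The main difference is that you invoke Rellich--Kondrachov to upgrade $f_{m}\rightharpoonup\tilde f$ in $W^{1,p}(S)$ to $f_{m}\to\tilde f$ strongly in $L^{p}(S)$; this makes the passage to the limit in $\int_{S}(h-f_{m})(f_{m}-\varphi)\,d\lambda$ immediate and lets you run Minty's trick directly with test functions $\varphi\in W^{1,p}(S)\cap L^{\infty}(S)$. The paper, by contrast, works only with weak $L^{p}$ and weak $L^{2}$ compactness, extracts a separate weak limit $\zeta$ of the fluxes $|\nabla f_{m}|^{p-2}\nabla f_{m}$ in $L^{\tilde p}(S;\mathbb{R}^{n})$, proves the energy inequality $\limsup_{m}\int_{S}\gamma_{m}|\nabla f_{m}|^{p}\,d\lambda\le\int_{S}F\hat F\,d\lambda$ via weak lower semicontinuity of the $L^{2}$-norm, and then runs Minty with arbitrary vector fields $\varphi\in L^{p}(S;\mathbb{R}^{n})$ to identify $\zeta=|\nabla F|^{p-2}\nabla F$. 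Your use of compactness buys a shorter and more transparent proof at the (harmless) cost of relying on $S$ being bounded and $C^{1}$, which the paper assumes anyway; it also yields the stronger conclusion of strong $L^{p}(S)$-convergence of the full sequence, not merely weak $L^{1}(S)$-convergence.
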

The following proof is long and technical. Moreover, the proof works similar to the one of \cite[Prop. 3.5]{main} which states the range condition $L^{\infty}(S)\subseteq R(Id+a(\gamma))$. Proving the range condition works by showing that a certain resolvent converges. As in our case, it is easy to see that the resolvent converges to a limit, but it is very challenging to show that this is the correct limit. And the delicate technique which is used to show that the limit is the correct one, is the same as in \cite{main}. As it is, on a first glance, not that obvious that these proofs work similar, the proof of Lemma \ref{fancylemma} will be given here.
\begin{proof} Firstly, observe that, by passing to a subsequence if necessary, $\lim \limits_{m \rightarrow \infty}\gamma_{m}=\gamma$ a.e. on $S$. Moreover, it is clear that $|\gamma|\leq g_{2}$ a.e. on $S$. This implies $|\gamma_{m}-\gamma|^{\max(p,\tp)}\leq (2g_{2})^{\max(p,\tp)}$. Consequently, since $\lambda(S)<\infty$ it follows by virtue of dominated convergence that
	\begin{align}
	\label{fancylemmaproof7}
	\lim \limits_{m \rightarrow \infty} \gamma_{m}=\gamma \text{, in } L^{p}(S) \text{ and in } L^{\tp}(S).
	\end{align}
	Let $f_{m}:=(Id+a(\gamma_{m}))^{-1}h$ for each $m \in \mathbb{N}$ and $f:=(Id+a(\gamma))^{-1}h$. Additionally introduce\linebreak $\hat{f}_{m}:=a(\gamma_{m})f_{m}$ for all $m \in \mathbb{N}$ and $\hat{f}:=a(\gamma)f$.\\
	Note that by construction $f+\hat{f}=h=f_{m}+\hat{f}_{m}$ for each $m \in \mathbb{N}$. Moreover, one has
	\begin{align}
	\label{fancylemmaproof2}
	f_{m} << f_{m}+\hat{f}_{m}=h,~\forall m \in \mathbb{N},
	\end{align}
	by complete accretivity.\\
	Consequently, $||f_{m}||_{L^{p}(S)}\leq ||h||_{L^{p}(S)}<\infty$ for each $m \in \mathbb{N}$. As $L^{p}(S)$ is reflexive this implies, by passing to a subsequence if necessary, that there  is an $F \in L^{p}(S)$ such that
	\begin{align}
	\label{fancylemmaproof3}
	\wlim \limits_{m \rightarrow \infty} f_{m}=F \text{ in } L^{p}(S).
	\end{align}
	Now introduce $\hat{F}:=h-F$. Then 
	\begin{align}
	\label{fancylemmaproof5}
	\wlim \limits_{m \rightarrow \infty} \hat{f}_{m}=\hat{F} \text{ in } L^{p}(S),
	\end{align}
	as $\hat{f}_{m}=h-f_{m}$ for each $m \in \mathbb{N}$.\\
	Now it will be verified that 
	\begin{align}
	\label{fancylemmaproof1}
	F \in W^{1,p}(S) \cap L^{\infty}(S).
	\end{align}
	Proof of (\ref{fancylemmaproof1}). First of all it follows from (\ref{fancylemmaproof2}), (\ref{fancylemmaproof3}) and by the virtue of \cite[ Corollary 2.7]{cao} that $F<<h$ and consequently $F \in L^{\infty}(S)$. Hence, particularly $F \in L^{p}(S)$.\\
	Moreover, $(f_{m},\hat{f}_{m}) \in a(\gamma_{m})$ together with (\ref{fancylemmaproof2}) yields
	\begin{align}
	\label{fancylemmaproof4}
	||\nabla f_{m}||^{p}_{L^{p}(S;\mathbb{R}^{n})}=\int \limits_{S} |\nabla f_{m}|^{p}d \lambda \leq \frac{1}{g_{1}} \int \limits_{S} \gamma_{m} |\nabla f_{m}|^{p}d \lambda = \frac{1}{g_{1}} \int \limits_{S} f_{m}\hat{f}_{m}d \lambda \leq \frac{2}{g_{1}} \lambda(S) ||h||^{2}_{L^{\infty}(S)}.
	\end{align}
	Consequently, by passing to a subsequence if necessary, there is an $\mathbb{F}=(\mathbb{F}_{1},...,\mathbb{F}_{n}) \in L^{p}(S;\mathbb{R}^{n})$ such that $	\wlim \limits_{m \rightarrow \infty} \nabla f_{m}=\mathbb{F} \text{ in } L^{p}(S;\mathbb{R}^{n}).$ This, together with (\ref{fancylemmaproof3}), implies for all $\varphi \in C_{c}^{\infty}(S)$ that 
	\begin{align*}
		\int \limits_{S} F \frac{\partial}{\partial x_{j}} \varphi d \lambda=\lim \limits_{m \rightarrow \infty}\int \limits_{S} f_{m} \frac{\partial}{\partial x_{j}} \varphi d \lambda=\lim \limits_{m \rightarrow \infty}-\int \limits_{S}  \varphi  \frac{\partial}{\partial x_{j}} f_{m}d \lambda =-\int \limits_{S}  \varphi \mathbb{F}_{j}d \lambda,
	\end{align*}
	i.e. $F \in W^{1,1}_{\text{Loc}}(S)$ and $\nabla F=\mathbb{F}$. Consequently, (\ref{fancylemmaproof1}) holds and also
	\begin{align}
	\label{fancylemmaproof12}
	\wlim \limits_{m \rightarrow \infty} \nabla f_{m}=\nabla F \text{ in } L^{p}(S;\mathbb{R}^{n}).
	\end{align}
	Now observe that (\ref{fancylemmaproof4}) yields
	\begin{align*}
	||~|\nabla f_{m}|^{p-2}\nabla f_{m}||_{L^{\tp}(S;\mathbb{R}^{n})}^{\tp} = \int \limits_{S} |\nabla f_{m}|^{(p-1)\tp}d \lambda = \int \limits_{S} |\nabla f_{m}|^{p}d \lambda \leq \frac{2}{g_{1}} \lambda(S) ||h||^{2}_{L^{\infty}(S)}.
	\end{align*}
	Consequently, by passing to a subsequence if necessary, there is a $\zeta \in L^{\tp}(S;\mathbb{R}^{n})$ such that
	\begin{align}
	\label{fancylemmaproof8}
	\wlim \limits_{m \rightarrow \infty} |\nabla f_{m}|^{p-2}\nabla f_{m}=\zeta ~ \text{in } L^{\tp}(S;\mathbb{R}^{n}).
	\end{align}
	Now it will be proven that 
	\begin{align}
	\label{fancylemmaproof9}
	\int \limits_{S} \gamma  \zeta \cdot  \nabla \varphi d \lambda = \int \limits_{S} \hat{F} \varphi d \lambda,~\forall \varphi \in W^{1,p}(S) \cap L^{\infty}(S).
	\end{align}
	For $\varphi \in C^{\infty}(\overline{S})$, (\ref{fancylemmaproof4}) implies that
	\begin{align*}
	\int \limits_{S} \left| |\nabla f_{m}|^{p-2}  \nabla f_{m}\cdot \nabla \varphi \right|^{\tp}d \lambda \leq \int \limits_{S} |\nabla f_{m}|^{p} |\nabla \varphi|^{\tp}d \lambda \leq ||~ |\nabla \varphi|^{\tp}||_{L^{\infty}(S)}\frac{2}{g_{1}} \lambda(S) ||h||^{2}_{L^{\infty}(S)}<\infty,
	\end{align*}
	for all $m \in \mathbb{N}$.\\
	This yields, by virtue of H\"older's inequality and (\ref{fancylemmaproof7}) that
	\begin{align*}
	\lim \limits_{m \rightarrow \infty} \left| \int \limits_{S} (\gamma_{m}-\gamma)|\nabla f_{m}|^{p-2} \nabla f_{m}\cdot \nabla \varphi d \lambda \right| \leq \lim \limits_{m \rightarrow \infty} ||\gamma_{m}-\gamma||_{L^{p}(S)} ||~|\nabla f_{m}|^{p-2}  \nabla f_{m}\cdot \nabla \varphi ||_{L^{\tp}(S)}=0
	\end{align*}
	for all $\varphi \in C^{\infty}(\overline{S})$.\\
	Using this, (\ref{fancylemmaproof5}) as well as (\ref{fancylemmaproof8}) yields (\ref{fancylemmaproof9}) for $\varphi \in C^{\infty}(\overline{S})$. Moreover, for arbitrary \linebreak$\varphi \in W^{1,p}(S) \cap L^{\infty}(S)$, there is, as $S$ is of class $C^{1}$, a sequence $(\varphi_{m})_{m \in \mathbb{N}} \subseteq  C^{\infty}(\overline{S})$ such that $\lim \limits_{m \rightarrow \infty} \varphi_{m}=\varphi$ in $W^{1,p}(S)$. Hence, as $\gamma \zeta \in L^{\tp}(S;\mathbb{R}^{n})$ and $\hat{F}=h-F \in L^{\infty}(S) \subseteq L^{\tp}(S)$, one obtains
	\begin{align*}
		\int \limits_{S} \hat{F} \varphi d \lambda=\lim \limits_{m \rightarrow \infty} \int \limits_{S} \hat{F} \varphi_{m} d \lambda= \lim \limits_{m \rightarrow \infty}  \int \limits_{S} \gamma  \zeta\cdot  \nabla \varphi_{m}  d \lambda=\int \limits_{S} \gamma  \zeta\cdot  \nabla \varphi  d \lambda,
	\end{align*}
	which verifies (\ref{fancylemmaproof9}).\\
	Now observe the following: If one has
	\begin{align}
	\label{fancylemmaproof10}
	\zeta = |\nabla F|^{p-2}\nabla F,
	\end{align}
	then (\ref{fancylemmaproof9}) yields
	\begin{align*} 
	\int \limits_{S} \gamma | \nabla F|^{p-2}   \nabla F \cdot  \varphi d \lambda = \int \limits_{S} \hat{F} \varphi d \lambda,~\forall \varphi \in W^{1,p}(S) \cap L^{\infty}(S).
	\end{align*}
	This, together with (\ref{fancylemmaproof1}) implies $(F,\hat{F}) \in a(\gamma)$.\\ 
	Since it has been already established that $h=F+\hat{F}$ and since also $h=f+\hat{f}$ as well as $(f,\hat{f}) \in a(\gamma)$ the accretivity of $a(\gamma)$ yields
	\begin{align*}
	||f-F||_{L^{1}(S)} \leq ||f-F+\hat{f}-\hat{F}||_{L^{1}(S)}=||h-h||_{L^{1}(S)}=0.
	\end{align*}
	Consequently, (\ref{fancylemmaproof10}) implies $f=F$ and it follows by virtue of (\ref{fancylemmaproof3}) that
	\begin{align*}
	\wlim \limits_{m \rightarrow \infty} (Id+a(\gamma_{m}))^{-1}h=\wlim \limits_{m \rightarrow \infty} f_{m}=F=f=(Id+a(\gamma))^{-1}h~\text{in } L^{p}(S).
	\end{align*}
	Conclusively, since $L^{p}(S)$-weak convergence implies $L^{1}(S)$-weak convergence, the claim follows once (\ref{fancylemmaproof10}) is proven.\\
	The delicate proof of (\ref{fancylemmaproof10}) is preceded by the proofs of the following four statements.\\
	One has, by passing to a subsequence if necessary:
	\begin{compactenum}[(I)]
		\item  $\limsup \limits_{m \rightarrow \infty} \int \limits_{S} \gamma_{m} |\nabla f_{m}|^{p} \leq \int \limits_{S}F\hat{F} d \lambda$.
		\item $\wlim \limits_{m \rightarrow \infty}\gamma_{m}\nabla f_{m}=\gamma \nabla F$ in $L^{p}(S)$.
		\item $\wlim \limits_{m \rightarrow \infty}\gamma_{m}|\nabla f_{m}|^{p-2}\nabla f_{m}=\gamma \zeta$ in $L^{\tp}(S;\mathbb{R}^{n})$.
		\item $\lim \limits_{m \rightarrow \infty}\int \limits_{S}(\gamma-\gamma_{m})|\varphi|^{p}d\lambda=0$ for all $\varphi \in L^{p}(S;\mathbb{R}^{n})$.
	\end{compactenum}
	Proof of (I). Firstly, (\ref{fancylemmaproof2}) implies that $||f_{m}||_{L^{2}(S)}\leq||h||_{L^{2}(S)}$. Consequently, $(f_{m})_{m \in \mathbb{N}}$ has, by passing to a subsequence if necessary, an $L^{2}(S)$-weakly convergent subsequence, converging to an $\tilde{f} \in L^{2}(S)$. Moreover, it is plain that $\tilde{f}=F$ and therefore
	\begin{align}
	\label{fancylemmaproof11}
	\int \limits_{S} F^{2} d \lambda \leq \liminf \limits_{m \rightarrow \infty} \int \limits_{S} f_{m}^{2} d \lambda.
	\end{align}
	Conclusively, it follows by virtue of (\ref{fancylemmaproof3}) as well as (\ref{fancylemmaproof11}) that
	\begin{align*}
		\limsup \limits_{m \rightarrow \infty} \int \limits_{S} \gamma_{m} |\nabla f_{m}|^{p}d \lambda\leq \limsup \limits_{m \rightarrow \infty} \int \limits_{S} f_{m}hd \lambda-\liminf \limits_{m \rightarrow \infty}\int \limits_{S} f_{m}^{2}d \lambda =\int \limits_{S} F \hat{F}d \lambda.
	\end{align*} 
	\noindent Proof of (II). Firstly, note that (\ref{fancylemmaproof4}) yields
	\begin{align*}
	||\gamma_{m}\nabla f_{m}||_{L^{p}(S;\mathbb{R}^{n})}^{p} = \int \limits_{S} \gamma_{m}^{p} |\nabla f_{m}|^{p} d\lambda \leq \frac{g_{2}^{p}}{g_{1}} 2 \lambda(S) ||h||_{L^{\infty}(S)}^{2}<\infty,~\forall m \in \mathbb{N}.
	\end{align*}
	Consequently, by passing to a subsequence if necessary, there is an $\alpha \in L^{p}(S;\mathbb{R}^{n})$ such that
	\begin{align}
	\label{fancylemmaproof13}
	\wlim \limits_{m \rightarrow \infty} \gamma_{m}\nabla f_{m} = \alpha \text{ in } L^{p}(S;\mathbb{R}^{n}).
	\end{align}
	Moreover, one has for any $\varphi \in L^{\infty}(S;\mathbb{R}^{n})$, by virtue of H\"older's inequality, Cauchy-Schwarz' inequality, (\ref{fancylemmaproof4}) and (\ref{fancylemmaproof7}) that
	\begin{align*}
		\lim \limits_{m \rightarrow \infty}\left| \int \limits_{S} (\gamma_{m}-\gamma)  \nabla f_{m}\cdot \varphi d \lambda\right|\leq  \lim \limits_{m \rightarrow \infty} ||\gamma_{m}-\gamma||_{L^{\tp}(S)} ||~|\varphi|~||_{L^{\infty}(S)}  \left( \frac{2}{g_{1}} \lambda(S)||h||_{L^{\infty}(S)} \right)^{\frac{1}{p}}  = 0.
	\end{align*}
	Consequently, one obtains for any $\varphi \in L^{\infty}(S;\mathbb{R}^{n})$ by invoking (\ref{fancylemmaproof12}) and (\ref{fancylemmaproof13}) that 
	\begin{align*}
		\int \limits_{S}  (\alpha -\gamma \nabla F)\cdot \varphi  d \lambda=\lim \limits_{m \rightarrow \infty} \int \limits_{S}  \gamma_{m}\nabla f_{m}\cdot \varphi  - \nabla f_{m}\cdot   \varphi\gamma  d \lambda= 0.
	\end{align*}
	This clearly implies $\alpha=\gamma \nabla F$. Hence, (\ref{fancylemmaproof13}) implies (II).\\
	Proof of (III). Firstly, note that it follows by virtue of (\ref{fancylemmaproof4}) that 
	\begin{align*}
	||\gamma_{m}|\nabla f_{m}|^{p-2}\nabla f_{m}||_{L^{\tp}(S;\mathbb{R}^{n})}^{\tp} \leq g_{2}^{\tp} \int \limits_{S}  |\nabla f_{m}|^{p}d \lambda \leq \frac{g_{2}^{\tp}}{g_{1}} 2 \lambda(S)||h||_{L^{\infty}(S)}^{2}.
	\end{align*}
	Consequently there is, by passing to a subsequence if necessary, an $\alpha \in L^{\tp}(S;\mathbb{R}^{n})$ such that
	\begin{align*}
	\wlim \limits_{m \rightarrow \infty} \gamma_{m}|\nabla f_{m}|^{p-2}\nabla f_{m}=\alpha \text{ in } L^{\tp}(S;\mathbb{R}^{n}).
	\end{align*}
	Moreover, one has for any $\varphi \in L^{\infty}(S;\mathbb{R}^{n})$, by virtue of H\"older's inequality, Cauchy-Schwarz' inequality, (\ref{fancylemmaproof4}) and (\ref{fancylemmaproof7}) that
	\begin{align*}
		\lim \limits_{m \rightarrow \infty} \left| \int \limits_{S} (\gamma_{m}-\gamma) |\nabla f_{m}|^{p-2}\nabla f_{m}\cdot \varphi d \lambda \right|=  0.
	\end{align*}
	Consequently, one obtains for any $\varphi \in L^{\infty}(S;\mathbb{R}^{n})$ by recalling (\ref{fancylemmaproof8}) that
	\begin{align*}
	\int \limits_{S}  (\alpha - \gamma \zeta)\cdot \varphi  d \lambda = \lim \limits_{m \rightarrow \infty} \int \limits_{S} ( \gamma_{m}|\nabla f_{m}|^{p-2}\nabla f_{m} - \gamma |\nabla f_{m}|^{p-2}\nabla f_{m})\cdot \varphi  d \lambda  = 0.
	\end{align*}
	This implies $ \alpha = \gamma \zeta$.\\
	Proof of (IV). Since particularly $\lim \limits_{m \rightarrow \infty}\gamma_{m}-\gamma=0$ in $L^{1}(S)$ one has, by passing to a subsequence if necessary, that $\lim \limits_{m \rightarrow \infty} (\gamma-\gamma_{m})|\varphi|^{p}=0 \text{ a.e. on } S$ for any given $\varphi \in L^{p}(S;\mathbb{R}^{n})$. Since plainly\linebreak $|(\gamma-\gamma_{m})|\varphi|^{p}|\leq 2g_{2}|\varphi|^{p} \in L^{1}(S)$, dominated convergence yields (IV).\\
	Proof of (\ref{fancylemmaproof10}). Let $\varphi \in L^{p}(S;\mathbb{R}^{n})$. First of all it is a direct consequence of Cauchy-Schwarz' inequality for the Euclidean norm that
	\begin{align*}
	\gamma_{m} ( |\nabla f_{m}|^{p-2}\nabla f_{m}-|\varphi|^{p-2}\varphi)\cdot(\nabla f_{m}-\varphi ) \geq 0,~\forall m \in \mathbb{N}
	\end{align*}
	and consequently
	\begin{align*}
	\int \limits_{S} \gamma_{m} |\varphi|^{p-2}  \varphi\cdot(\nabla f_{m}-\varphi )d \lambda \leq \int \limits_{S} \gamma_{m} |\nabla f_{m}|^{p-2} \nabla f_{m}\cdot(\nabla f_{m}-\varphi )d \lambda,~\forall m \in \mathbb{N},~\varphi \in L^{p}(S;\mathbb{R}^{n}). 
	\end{align*}
	(Hereby the existence of both integrals is a direct consequence of the boundedness of $\gamma_{m}$ and H\"older's inequality.)\\
	The last yields, by using at first (II) and (IV), then the last inequality, and finally (I) as well as (III) that 
	\begin{align}
	\label{fancylemmaproof15}
	\int \limits_{S} \gamma |\varphi|^{p-2} \varphi\cdot(\nabla F-\varphi )d\lambda \leq \int \limits_{S} F\hat{F}-\gamma  \zeta\cdot \varphi d \lambda,~\forall \varphi \in L^{p}(S;\mathbb{R}^{n}),
	\end{align}
	Now note that it follows from $f_{m} \in D(a(\gamma_{m}))$ that particularly $f_{m} \in W^{1,p}(S) \cap L^{\infty}(S)$. Consequently, by (\ref{fancylemmaproof9}) one gets
	\begin{align}
	\label{fancylemmaproof14}
	\int \limits_{S} \gamma  \zeta \cdot  \nabla f_{m} d \lambda = \int \limits_{S} \hat{F} f_{m} d \lambda,~\forall m \in \mathbb{N}.
	\end{align}
	Now note that combining (\ref{fancylemmaproof3}), (\ref{fancylemmaproof12}) and (\ref{fancylemmaproof14}) yields
	\begin{align*} 
	\int \limits_{S} \gamma  \zeta \cdot  \nabla F d \lambda = \int \limits_{S}\hat{F}F d\lambda
	\end{align*}
	Consequently, one infers from (\ref{fancylemmaproof15}) that
	\begin{align}
	\label{fancylemmaproof16}
	\int \limits_{S} \gamma |\varphi|^{p-2} \varphi\cdot (\nabla F-\varphi )d\lambda \leq \int \limits_{S} \gamma  \zeta \cdot ( \nabla F-\varphi ) d \lambda,~\forall \varphi \in L^{p}(S;\mathbb{R}^{n}).
	\end{align}
	Now note that $\nabla F \in L^{p}(S;\mathbb{R}^{n})$ which implies that $\nabla F-\alpha \varphi$ is, for any $\alpha \in (0,\infty)$ and $\varphi \in L^{p}(S;\mathbb{R}^{n})$ a valid choice as a test function in (\ref{fancylemmaproof16}). Hence, using $\nabla F-\alpha \varphi$ as a test function in (\ref{fancylemmaproof16}) and dividing the resulting equation by $\alpha$ yields
	\begin{align}
	\label{fancylemmaproof17}
	\int \limits_{S} \gamma |\nabla F-\alpha\varphi|^{p-2}( \nabla F-\alpha\varphi)\cdot\varphi d\lambda \leq \int \limits_{S} \gamma  \zeta \cdot\varphi  d \lambda,~\forall \varphi \in L^{p}(S;\mathbb{R}^{n}),~\alpha \in (0,\infty).
	\end{align}
	It is obvious that
	\begin{align*}
	\lim \limits_{\alpha \downarrow 0} \gamma |\nabla F-\alpha\varphi|^{p-2}( \nabla F-\alpha\varphi)\cdot \varphi  = \gamma |\nabla F|^{p-2} \nabla F\cdot\varphi  \text{ a.e. on } S
	\end{align*}
	for a given $\varphi \in L^{p}(S;\mathbb{R}^{n})$. Now let $\varepsilon>0$ and $\alpha \in (0,\varepsilon)$. Then one instantly verifies that
	\begin{align*}
	|\gamma |\nabla f-\alpha\varphi|^{p-2}( \nabla F-\alpha\varphi)\cdot\varphi | \leq g_{2} (|\nabla F|+\varepsilon|\varphi|)^{p-1}|\varphi| \text{ a.e. on } S
	\end{align*}
	for any $\varphi \in L^{p}(S;\mathbb{R}^{n})$. Now it is a direct consequence of H\"older's inequality that the right-hand-side of the last inequality is in $L^{1}(S)$ for any $\varphi \in L^{p}(S;\mathbb{R}^{n})$. Hence, it follows by virtue of dominated convergence that
	\begin{align*}
	\lim \limits_{\alpha \downarrow 0} \int \limits_{S} \gamma |\nabla F-\alpha\varphi|^{p-2}( \nabla f-\alpha\varphi)\cdot \varphi d\lambda = \int \limits_{S} \gamma |\nabla F|^{p-2} \nabla F\cdot \varphi  d \lambda.
	\end{align*}
	Consequently, it follows by recalling (\ref{fancylemmaproof17}) that 
	\begin{align*} 
	\int \limits_{S} \gamma |\nabla F|^{p-2} \nabla F\cdot \varphi d\lambda \leq \int \limits_{S} \gamma  \zeta \cdot  \varphi  d \lambda,~\forall \varphi \in L^{p}(S;\mathbb{R}^{n}).
	\end{align*}
	Conclusively, replacing $\varphi$ by $-\varphi$ implies
	\begin{align*} 
	\int \limits_{S}  ( \gamma |\nabla F|^{p-2}\nabla F-  \gamma\zeta )\cdot\varphi  d \lambda=0,~\forall \varphi \in L^{p}(S;\mathbb{R}^{n}).
	\end{align*}
	Finally, this yields $\gamma |\nabla F|^{p-2}\nabla F-  \gamma\zeta=0$ a.e. on $S$ which implies (\ref{fancylemmaproof10}) since particularly $\gamma\neq 0$ a.e. on $S$.
\end{proof}

\noindent The following lemma can be inferred from  \cite[Lemma 5.6]{ich} and the first lines of its proof:

\begin{lemma}\label{deterdomdenselemma} Let  $\gamma \in L^{1}_{g_{1},g_{2}}(S)$, $h \in L^{\infty}(S)$ and introduce $f_{m}:=(Id + \frac{1}{m}a(\gamma))^{-1}h$. Then one has
	\begin{align*}
	\lim \limits_{m \rightarrow \infty} f_{m}=h,~\text{in } L^{1}(S).
	\end{align*}
	Particularly, $D(a(\gamma))$ as well as $D(\mathfrak{a}(\gamma))$ are dense in $L^{1}(S)$.
\end{lemma}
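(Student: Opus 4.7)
The plan is to derive a fundamental $L^{2}(S)$-energy identity for $f_{m}$, use it to obtain a uniform bound and a weak limit, and then promote weak convergence to strong convergence via uniform convexity of $L^{2}(S)$.

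Since $f_{m} \in D(a(\gamma)) \subseteq W^{1,p}(S)\cap L^{\infty}(S)$, I would use $\varphi = f_{m}$ as a test function in the weak formulation of $(f_{m},\hat{f}_{m})\in a(\gamma)$, where $\hat{f}_{m} := a(\gamma)f_{m} = m(h-f_{m})$. Combining the resulting $\int_{S} \gamma|\nabla f_{m}|^{p}d\lambda = \int_{S} f_{m}\hat{f}_{m}d\lambda$ with the pointwise algebraic expansion $h^{2} = f_{m}^{2} + \tfrac{2}{m}f_{m}\hat{f}_{m} + \tfrac{1}{m^{2}}\hat{f}_{m}^{2}$ and integrating over $S$ yields
\[
||h||_{L^{2}(S)}^{2} = ||f_{m}||_{L^{2}(S)}^{2} + \tfrac{2}{m}\int_{S} \gamma |\nabla f_{m}|^{p}d\lambda + \tfrac{1}{m^{2}}||\hat{f}_{m}||_{L^{2}(S)}^{2}.
\]
The nonnegativity of each summand on the right then delivers $||f_{m}||_{L^{2}(S)} \leq ||h||_{L^{2}(S)}$ and, via $\gamma \geq g_{1}$, the crucial (growing) gradient bound $||\nabla f_{m}||_{L^{p}(S;\mathbb{R}^{n})}^{p} \leq \tfrac{m}{2g_{1}}||h||_{L^{2}(S)}^{2}$.

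Next, for any $\varphi \in W^{1,p}(S)\cap L^{\infty}(S)$ the weak formulation combined with $\hat{f}_{m} = m(h-f_{m})$ gives $\int_{S}(h-f_{m})\varphi d\lambda = \tfrac{1}{m}\int_{S}\gamma|\nabla f_{m}|^{p-2}\nabla f_{m}\cdot\nabla\varphi d\lambda$; H\"older's inequality applied to the right-hand side together with the gradient bound yields a quantity of order $m^{-1/p}||\nabla\varphi||_{L^{p}(S;\mathbb{R}^{n})}$, which vanishes as $m\to\infty$. Since $C^{\infty}(\overline{S})\subseteq W^{1,p}(S)\cap L^{\infty}(S)$ is dense in $L^{2}(S)$, and $(f_{m})_{m}$ is $L^{2}(S)$-bounded by the first step, the standard subsequence and uniqueness-of-weak-limit argument identifies $h$ as the only possible weak $L^{2}$-cluster point of $(f_{m})$; hence $f_{m}\rightharpoonup h$ in $L^{2}(S)$.

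To upgrade to strong convergence, the energy identity provides $\limsup_{m}||f_{m}||_{L^{2}(S)} \leq ||h||_{L^{2}(S)}$, while weak lower semicontinuity of the norm gives the reverse inequality for the $\liminf$, so the norms converge and uniform convexity of $L^{2}(S)$ forces strong convergence $f_{m}\to h$ in $L^{2}(S)$; since $\lambda(S)<\infty$ this implies $f_{m}\to h$ in $L^{1}(S)$. Density of $D(a(\gamma))$ and $D(\mathfrak{a}(\gamma))$ in $L^{1}(S)$ then follows immediately from $f_{m}\in D(a(\gamma))\subseteq D(\mathfrak{a}(\gamma))$ together with the fact that $L^{\infty}(S)$ is dense in $L^{1}(S)$. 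The main subtlety is that $h$ itself is not an admissible test function (only $h\in L^{\infty}(S)$, not $W^{1,p}(S)$), which is why I derive the key $L^{2}$-estimate from the pointwise expansion of $h^{2}$ rather than by trying to test the weak formulation against $h$ directly.
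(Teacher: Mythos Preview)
Your proof is correct. The paper does not supply its own argument for this lemma but simply refers to \cite[Lemma 5.6]{ich}; you have instead given a direct, self-contained proof via the $L^{2}$-energy identity obtained by testing with $\varphi=f_{m}$ and expanding $h^{2}=(f_{m}+\tfrac{1}{m}\hat f_{m})^{2}$. The resulting gradient bound $||\nabla f_{m}||_{L^{p}}^{p}\le \tfrac{m}{2g_{1}}||h||_{L^{2}}^{2}$ is exactly what is needed to make the pairing $\int_{S}(h-f_{m})\varphi\,d\lambda$ vanish at rate $m^{-1/p}$ for test functions $\varphi\in W^{1,p}(S)\cap L^{\infty}(S)$, and your passage from weak to strong $L^{2}$-convergence via norm convergence and uniform convexity is clean. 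Two minor remarks: the uniform bound $||f_{m}||_{L^{2}}\le ||h||_{L^{2}}$ could alternatively be read off from complete accretivity (Lemma \ref{aprop}.i) with $(0,0)\in a(\gamma)$), but your energy identity is needed anyway for the gradient estimate, so nothing is wasted; and the well-definedness of $f_{m}=(Id+\tfrac{1}{m}a(\gamma))^{-1}h$ for all $m$ follows from the range condition of Lemma \ref{aprop}.ii) together with the positive homogeneity of $a(\gamma)$ noted in the proof of Lemma \ref{lemmadetdifflips}.
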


\begin{lemma}\label{denselemma} $\tau(L^{1}(\Omega;L^{1}(S)))$ as well as $L^{1,\infty}(\Omega;L^{1}(S))$ are dense subsets of $L^{1}(\Omega;L^{1}(S))$.
\end{lemma}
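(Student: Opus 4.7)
The plan is to first reduce the two density claims to a single one by showing the inclusion $\tau(L^{1}(\Omega;L^{1}(S))) \subseteq L^{1,\infty}(\Omega;L^{1}(S))$. Indeed, for any $f \in L^{1}(\Omega;L^{1}(S))$ and $k \in (0,\infty)$ the truncation $\tau_{k}(f)$ satisfies $\|\tau_{k}(f)(\omega)\|_{L^{\infty}(S)} \leq k$ for $\P$-a.e.\ $\omega \in \Omega$, so $\tau_{k}(f) \in L^{1,\infty}(\Omega;L^{1}(S))$. Once $\tau(L^{1}(\Omega;L^{1}(S)))$ is shown to be dense, the density of the larger space $L^{1,\infty}(\Omega;L^{1}(S))$ is immediate.

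Next, I would fix an arbitrary $f \in L^{1}(\Omega;L^{1}(S))$ and prove $\lim_{k \to \infty} \tau_{k}(f) = f$ in $L^{1}(\Omega;L^{1}(S))$. The argument is a double application of dominated convergence: for $\P$-a.e.\ $\omega \in \Omega$ one has $\tau_{k}(f(\omega))(x) \to f(\omega)(x)$ pointwise a.e.\ on $S$ as $k\to \infty$, together with the pointwise bound $|\tau_{k}(f(\omega))| \leq |f(\omega)| \in L^{1}(S)$; therefore the inner dominated convergence theorem yields
\begin{equation*}
\lim_{k \to \infty} \|\tau_{k}(f(\omega)) - f(\omega)\|_{L^{1}(S)} = 0, \quad \text{for }\P\text{-a.e.\ } \omega \in \Omega.
\end{equation*}
Since moreover $\|\tau_{k}(f(\omega)) - f(\omega)\|_{L^{1}(S)} \leq 2\|f(\omega)\|_{L^{1}(S)}$, which is integrable over $\Omega$ by the assumption $f \in L^{1}(\Omega;L^{1}(S))$, a second application of dominated convergence (on $\Omega$) delivers the desired convergence in $L^{1}(\Omega;L^{1}(S))$.

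The only non-routine issue is to confirm that each $\tau_{k}(f)$ is genuinely an element of $L^{1}(\Omega;L^{1}(S))$, i.e.\ that it is $\mathcal{F}$-$\mathfrak{B}(L^{1}(S))$-measurable. This follows because the truncation map $L^{1}(S) \ni v \mapsto \tau_{k}(v) \in L^{1}(S)$ is $1$-Lipschitz (the pointwise inequality $|\tau_{k}(s)-\tau_{k}(t)| \leq |s-t|$ valid for all $s,t \in \mathbb{R}$ transfers by integration on $S$), hence continuous and a fortiori $\mathfrak{B}(L^{1}(S))$-$\mathfrak{B}(L^{1}(S))$-measurable; the composition $\omega \mapsto \tau_{k}(f(\omega))$ is then measurable. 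Integrability of $\tau_{k}(f)$ is trivial from $|\tau_{k}(f)| \leq |f|$. I expect no substantial obstacle; this is a standard truncation argument in the Bochner setting.
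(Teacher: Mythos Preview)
Your proposal is correct and follows essentially the same route as the paper: reduce to the density of $\tau(L^{1}(\Omega;L^{1}(S)))$ via the inclusion $\tau(L^{1}(\Omega;L^{1}(S))) \subseteq L^{1,\infty}(\Omega;L^{1}(S))$, then show $\tau_{k}(f)\to f$ in $L^{1}(\Omega;L^{1}(S))$ by two nested applications of dominated convergence. Your extra verification of the $\mathcal{F}$-$\mathfrak{B}(L^{1}(S))$-measurability of $\tau_{k}(f)$ via the $1$-Lipschitz property of the truncation map is a detail the paper leaves implicit.
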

\begin{proof} Firstly, note that clearly $\tau(L^{1}(\Omega;L^{1}(S))) \subseteq L^{1,\infty}(\Omega;L^{1}(S))$ which implies that it suffices to prove the claim for $\tau(L^{1}(\Omega;L^{1}(S)))$.\\ 
	Now let $f \in L^{1}(\Omega;L^{1}(S))$ and introduce $f_{k}:=\tau_{k}(f)$ for each $k \in \mathbb{N}$.\\
	As $\lim \limits_{k \rightarrow \infty}\tau_{k}(s)=s$ for each $s \in \mathbb{R}$ it is clear that $\lim \limits_{k \rightarrow \infty}f_{k}(\omega)=f(\omega)$ a.e. on $S$ for every given $\omega \in \Omega$, up to a $\P$-nullset.\\
	Since $|f_{k}(\omega)-f(\omega)| \leq 2 |f(\omega)| \in L^{1}(S),$ Lebesgue's theorem yields
	\begin{align*}
	\lim \limits_{k \rightarrow \infty} ||f_{k}(\omega)-f(\omega)||_{L^{1}(S)}=0 \text{, for $\P$-a.e. } \omega \in \Omega.
	\end{align*}
	Moreover, $||f_{k}(\cdot)-f(\cdot)||_{L^{1}(S)} \leq 2 ||f(\cdot)||_{L^{1}(S)} \in L^{1}(\Omega),$ which implies, by applying Lebesgue's Theorem again, that $\lim \limits_{k \rightarrow \infty} f_{k}=f$ in $L^{1}(\Omega;L^{1}(S))$. 
\end{proof}

\end{document}